\documentclass[a4paper, 11 pt , english]{article}

\usepackage{ulem}
\usepackage[english]{babel}
\usepackage{amsthm,enumitem}
\usepackage{pstricks,pst-node,pst-tree}
\usepackage{amssymb}
\usepackage[utf8]{inputenc}
\usepackage[T1]{fontenc} 
\usepackage{fancybox} 
\usepackage{alltt}
\usepackage{graphicx}
\usepackage{bm}
\usepackage{lmodern}           
\usepackage{babel}
\usepackage[top = 3cm, bottom = 3 cm, right = 3cm, left = 3 cm]{geometry}
\usepackage{appendix}
\usepackage{float}
\usepackage{xcolor}
\usepackage{fancyhdr}
\usepackage{tikz}
\usetikzlibrary{arrows.meta}
\usepackage{csquotes}
\usepackage{graphicx}
\usepackage{subcaption}
\usepackage{float}
\usepackage{subcaption}

\newcommand\smallO{
  \mathchoice
    {{\scriptstyle\mathcal{O}}}
    {{\scriptstyle\mathcal{O}}}
    {{\scriptscriptstyle\mathcal{O}}}
    {\scalebox{.7}{$\scriptscriptstyle\mathcal{O}$}}
  }

\usepackage[ruled,linesnumbered]{algorithm2e}

\usepackage{amsmath,amsthm,amssymb,hyperref,xcolor}
\usepackage{dsfont}
\usepackage{mathtools}
\usetagform{default} 
\newtagform{noparen}{}{}
\newcommand{\independent}{\perp\!\!\!\!\perp} 
\newcommand{\eqreftag}[1]{\ref{#1}}

\def \E{\mathbb{E}}
\def \P{\mathbb{P}}
\def \Q{\mathbb{Q}}

\def \F{\mathbb{F}}
\def \R{\mathbb{R}}
\def \X{\mathbb{X}}
\def \L{\mathbb{L}}

\def\Cc{\mathcal{C}}

\def\Fc{\mathcal{F}}
\def\Lc{\mathcal{L}}
\def\Pc{\mathcal{P}}
\def\Kc{\mathcal{K}}

\def\Bc{\mathcal{B}}
\def\Xc{\mathcal{X}}
\def\Wc{\mathcal{W}}
\def\Yc{\mathcal{Y}}
\def\Zc{\mathcal{Z}}

\DeclareMathOperator{\Ker}{Ker}
\DeclareMathOperator{\Tan}{Tan}

\DeclareMathOperator*{\argmax}{arg\,max}

\def \AW{\mathcal{AW}}

\numberwithin{equation}{section}

\newtheorem{Theorem}{Theorem}[section]
\newtheorem{Definition}[Theorem]{Definition}
\newtheorem{Proposition}[Theorem]{Proposition}
\newtheorem{Assumption}{Assumption}[section]
\newtheorem{Lemma}[Theorem]{Lemma}

\newtheorem{Remark}[Theorem]{Remark}

\title{Projected McKean--Vlasov Dynamics for Entropic Weak Optimal Transport}
\author{Nathan Sauldubois 
\footnote{Department of Finance and Risk Engineering, New York University. ns6982@nyu.edu }
        \and Xin Zhang
        \footnote{Department of Finance and Risk Engineering, New York University. xz1662@nyu.edu.}
        \thanks{X. Zhang is partially supported by the NSF Grant DMS-2508556.}
        }

\begin{document}

\maketitle

\begin{abstract}

Unlike classical optimal transport, weak transport costs depend nonlinearly on
the conditional law of couplings. This feature is essential in problems involving barycenter, conditional moments, and martingale-type constraints. Meanwhile, such conditional dependence makes ordinary Wasserstein geometry insufficient and calls instead for an adapted Wasserstein viewpoint. In this paper, we investigate the entropy-regularized weak optimal transport via gradient flows in adapted Wasserstein space.

We derive, from the formal tangent structure of adapted Wasserstein
space and the projection onto the set of couplings with prescribed marginals,
a coupled McKean--Vlasov SDE. A novel and subtle term is a projection that, at each $Y$-location, averages a weak-transport force that already depends on the conditional law of $Y$ given $X$, thereby preserving marginals while retaining the nonlinear weak-transport structure.

Under mild integrability and regularity assumptions, we prove weak existence and uniqueness in law for this
projected McKean--Vlasov equation. We then prove that the flow
converges, in the adapted Wasserstein topology, to the unique minimizer of the
entropic weak optimal transport problem. We also describe a particle approximation and
illustrate the dynamics on optimal transport and martingale optimal transport examples.
\end{abstract}

\section{Introduction}

Optimal transport provides a powerful framework for comparing probability measures. Let $\mu$, $\nu$ be two probability measures on a complete metric space $\Xc$ and $\R^d$ respectively, and let $\bar c: \Xc \times \R^d \to \R$ be a transport cost. Classical optimal transport minimizes
\[
    \int_{\Xc\times\R^d} \bar c(x,y)\,\pi(dx,dy)
\]
over all couplings \(\pi\in\Pi(\mu,\nu)\). Weak optimal transport replaces this
linear dependence by a dependence on conditional laws. If
\(\pi(dx,dy)=\mu(dx)\pi_x(dy)\), then a weak transport cost has the form
\[
    \int_{\Xc} c(x,\pi_x)\,\mu(dx),
\]
where \(c(x,\cdot)\) is a functional on probability measures on \(\R^d\). Introduced by \cite{gozlan2017kantorovich},
this 
extension is useful whenever the value of a transport plan is determined not
only by pointwise pairs \((x,y)\), but also by local distributional information
such as barycenters, conditional moments, or martingale-type constraints.
Classical optimal transport is recovered from the special case
\(c(x,\rho)=\int \bar c(x,y)\rho(dy)\), whereas genuinely weak costs lead to a
nonlinear dependence on the transport kernel \(x\mapsto\pi_x\).

The object of this paper is the entropically regularized weak optimal transport
problem
\begin{align}\label{eq:WOT}
    \inf_{\pi\in\Pi(\mu,\nu)}
    \left\{
    \int_{\Xc} c(x,\pi_x)\,\mu(dx)
    +
    \varepsilon H(\pi\,\|\,\mu\otimes\nu)
    \right\},
\end{align}
where \(\varepsilon>0\) and \(H(\cdot\|\cdot)\) denotes relative entropy. We write
\begin{align}\label{eq:objective}
    J(\pi)
    :=
    \int_{\Xc} c(x,\pi_x)\,\mu(dx)
    +
    \varepsilon H(\pi\,\|\,\mu\otimes\nu).
\end{align}
The entropy term plays the same regularizing role as in entropic optimal
transport: it selects absolutely continuous couplings, improves compactness and
strict convexity properties, and introduces a diffusion mechanism in the
dynamics. Since every \(\pi\in\Pi(\mu,\nu)\) has first marginal \(\mu\), the
entropy also admits the disintegrated form
\[
    H(\pi\,\|\,\mu\otimes\nu)
    =
    \int_{\Xc} H(\pi_x\,\|\,\nu)\,\mu(dx),
\]
whenever the left-hand side is finite. Thus \eqref{eq:WOT} can be viewed as a weak optimal transport problem.

Inspired by \cite{conforti2023projected}, the purpose of this paper is to construct and analyze a continuous-time dynamic that converges to a minimizer of \eqref{eq:WOT} from the perspective of gradient flows. To analyze the gradient of $J$ on $\Pi(\mu,\nu)$, the appropriate geometric framework is the adapted Wasserstein geometry, rather than the ordinary Wasserstein geometry on \(\Xc \times \R^d\). This choice is structural: the functional \(J\) depends on the kernel \(x \mapsto \pi_x\), and is therefore not continuous with respect to the classical Wasserstein topology on \(\Pc_2(\Xc \times \R^d)\). The adapted Wasserstein topology is thus the natural setting on which one can analyze stability and gradient flows for weak transport costs.

This paper gives a first
formal derivation of a gradient flow on adapted Wasserstein space, showing
that the steepest-descent dynamics of the entropic weak transport functional is
a new projected coupled McKean--Vlasov SDE. The intriguing term is a projection that, at each $Y$-location, averages a weak-transport force that already depends on the conditional law of $Y$ given $X$, thereby preserving the second marginal while retaining the nonlinear weak-transport structure. By proving the well-posedness of this dynamic
and convergence to the unique entropic weak transport optimizer, the paper
connects the static theory of weak optimal transport with a continuous-time
gradient-flow perspective and provides a basis for particle algorithms for
weak optimal transport.
\subsection{Main results}

We first state the main dynamical object and the principal results. Assume that
\(c\) admits a linear functional derivative in its measure argument, and set
\begin{equation}\label{eq:def-chat}
    \hat c(x,y,\rho)
    :=
    \partial_y\delta_m c(x,\rho)(y).
\end{equation}
For \(\pi\in\Pi(\mu,\nu)\), define the conditional projection term
\begin{equation*}
    \tilde c^{\pi}(y)
    :=
    \E^{\pi}\!\left[
        \hat c(X,Y,\pi_X)\,\middle|\,Y=y
    \right]
    =
    \int_{\Xc}\hat c(x,y,\pi_x)\,\pi^y(dx),
\end{equation*}
where \(\pi^y\) is a regular conditional law of \(X\) given \(Y=y\). Let \(\nu(dy)=e^{-V(y)}dy\).

The
projected adapted-Wasserstein gradient flow of $J$, derived in Section~\ref{sec:gradient-flow}, 
is the McKean--Vlasov stochastic differential equation
\begin{equation}\label{flow:grad}
\begin{cases}
    dY_t
    =
    -\left(
        \hat c(X,Y_t,(\pi_t)_X)
        -
        \tilde c^{\pi_t}(Y_t)
        +
        \varepsilon\nabla V(Y_t)
    \right)dt
    +
    \sqrt{2\varepsilon}\,dB_t, \\
    \pi_t=\Lc(X,Y_t), \quad \pi_0= \mu \otimes \nu.
    \end{cases}
\end{equation}
Here \(X\) is frozen in time, and \(B\) is a
Brownian motion independent of the initial condition. The term
\(\hat c(X,Y_t,(\pi_t)_X)\) is the weak-transport force, and the conditional average
\(\tilde c^{\pi_t}(Y_t)\) is the projection term which enforces preservation of
the second marginal. The drift \(\varepsilon\nabla V\) and the noise
\(\sqrt{2\varepsilon}\,dB_t\) represent the entropy relative to the reference
measure \(\nu\).

The cost \(c\), the marginals \(\mu\) and \(\nu\), and the reference measure are
fixed throughout the paper. We work under the following standing assumptions.

\begin{Assumption}[Marginals]\label{ass:marginals}
Let \((\Xc,d_{\Xc})\) be a complete metric space and let \(\mu\in\Pc(\Xc)\). Let $\nu \in \Pc(\R^d)$ with
\(
    \nu(dy)=e^{-V(y)}\,dy
\)
for some \(V\in C^2(\R^d; \R)\). We assume that 
\begin{enumerate}[label=(\roman*), ref=(\roman*)]
    \item there exists \(\lambda\in\R\) such that
    \[
        D^2V(y)\succeq \lambda I_d,
        \qquad y\in\R^d;
    \]
    \item $\mu \in \Pc_p(\mathcal{X})$, \(\nabla V\in L^p(\nu)\) for some \(p>d+2\).
\end{enumerate}
\end{Assumption}

\begin{Assumption}[Cost]\label{ass:cost}
The cost
\(
    c:\Xc\times\Pc_2(\R^d)\to\R
\)
is measurable, convex in its second argument, and admits a linear functional
derivative \(\delta_m c\). With \(\hat c\) defined by \eqref{eq:def-chat}, we assume that 
\begin{enumerate}[label=(\roman*), ref=(\roman*)]
    \item \emph{Integrability bound.} There exist measurable functions
    \(a:\Xc\to\R\), \(b:\R^d\to\R\), and \(h:\R_+\to\R_+\) such that
    \(a\in L^1(\mu)\), \(b\in L^1(\nu)\), and, whenever \(\rho\ll\nu\),
    \[
        c(x,\rho)
        \leq
        a(x)
        +
        \int_{\R^d} b(y)\,\rho(dy)
        +
        \int_{\R^d}
        h\!\left(\frac{d\rho}{d\nu}(y)\right)\nu(dy).
    \]

    \item \emph{Sub-Gaussian condition.} There exists \(r^*>0\) such
    that
    \[
        \sup_{\pi\in\Pi(\mu,\nu)}
        \E^{\pi}
        \left[
            \exp\left(
                r^*
                \left|
                    \hat c(X,Y,\pi_X)
                \right|^2
            \right)
        \right]
        <\infty.
    \]

    \item The divergence \(\nabla_y\!\cdot\hat c(x,y,\rho)\) is bounded uniformly in
    \((x,y,\rho)\).
\end{enumerate}
\end{Assumption}

One may notice that these assumptions are close in spirit to those used in
\cite{conforti2023projected}. In particular, the second condition is satisfied
under a sublinear growth assumption on $\hat c$. More precisely, assume that
there exist $C>0$ and $x_0\in\Xc$ such that, for every $(x,y,\rho)$,
\[
    |\hat c(x,y,\rho)|
    \leq
    C\left(
        1
        + d_{\Xc}(x,x_0)
        + |y|
        + \int_{\R^d}|z|\,\rho(dz)
    \right).
\]
Then the required integrability bound follows as soon as $\mu$ and $\nu$ have
sufficient exponential moments. For instance, it is enough to assume that there
exists $\alpha>0$ such that
\[
    \int_{\Xc}
    \exp\!\left(\alpha d_{\Xc}(x,x_0)^2\right)
    \mu(dx)
    <\infty,
    \qquad
    \int_{\R^d}
    \exp\!\left(\alpha |y|^2\right)
    \nu(dy)
    <\infty .
\]

\subsubsection{Well-posedness}
Our first main result establishes the well-posedness of the McKean--Vlasov SDE \eqref{flow:grad}. More precisely, we prove the well-posedness for a general equation \eqref{eq:MV-a} in Theorem~\ref{prop:existence-general-unbounded} and Theorem~\ref{prop:uniqueness-general}. 
\begin{Theorem}\label{thm:well-posedness}
Under Assumption~\ref{ass:marginals} and Assumption~\ref{ass:cost}, the McKean-Vlasov SDE \eqref{flow:grad} admits a weak solution that is unique in law. Moreover,
 \(\pi_t=\Lc(X,Y_t) \in \Pi(\mu,\nu)\) for $t \geq 0$.  
\end{Theorem}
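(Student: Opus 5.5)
The plan is to follow the strategy of \cite{conforti2023projected}: first reduce to a frozen-measure linear SDE with a bounded-in-exponential-moment drift, then obtain the McKean--Vlasov solution by a fixed point on path laws. Uniqueness will come from a relative-entropy (Girsanov) contraction. The key observation is that \(X\) is frozen, so \eqref{flow:grad} is a family of diffusions in \(\R^d\), indexed by \(x\in\Xc\) and coupled only through \(\pi_t\). With \(\nu(dy)=e^{-V}dy\), we write the drift as \(-\varepsilon\nabla V(Y_t)+\beta_t(X,Y_t)\), where
\[
\beta_t(x,y):=-\hat c(x,y,(\pi_t)_x)+\tilde c^{\pi_t}(y).
\]
The reference dynamics is the Langevin diffusion \(dY=-\varepsilon\nabla V(Y)dt+\sqrt{2\varepsilon}dB\) started from \(\nu\). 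By Assumption~\ref{ass:marginals}(i) it is well posed and stationary with law \(\nu\). Hence \(R:=\mu\otimes\mathrm{Law}(\text{Langevin path})\) is a reference path measure with all time marginals \(\mu\otimes\nu\).

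\textbf{Step 1 (frozen flow and Girsanov).} Fix a flow \((\pi_t)_{t\le T}\subset\Pi(\mu,\nu)\). Assumption~\ref{ass:cost}(ii) applies to \(\hat c(X,Y,\pi_X)\) under any \(\pi\in\Pi(\mu,\nu)\). By Jensen for conditional expectations it also applies to \(\tilde c^{\pi}(Y)\), so the function \(\beta_t\) has uniform Gaussian moments under \(\pi\). I will construct the weak solution with drift \(\beta\) by Girsanov from \(R\). Novikov-type conditions must hold under \(R\), whose marginals are \(\mu\otimes\nu\) and not \(\pi_t\). To handle this, I will use the Gaussian bound only on short time intervals \([k\delta,(k+1)\delta]\) with \(\delta< r^*\varepsilon\), in the manner of Beneš/localized Novikov. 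The marginal issue is then addressed through the fixed point: the candidate flows will be exactly the time marginals of \(P\), so the Gaussian bound is evaluated under the correct law. Concretely, I will set up the map \(\Phi\) sending a path law \(P\) with marginals in \(\Pi(\mu,\nu)\) to the Girsanov-tilted law \(\Phi(P)\) with drift built from \(P_t\). Where needed, I will use truncations \(\beta^{(n)}\) together with the entropy estimate \(H(\Phi(P)\,|\,R)\le \frac{1}{4\varepsilon}\E\int_0^T|\beta_t|^2dt\).

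\textbf{Step 2 (marginal preservation).} For the frozen problem I will show that \(\mathrm{Law}(X,Y_t)\in\Pi(\mu,\nu)\) whenever the drift is built from a flow in \(\Pi(\mu,\nu)\) and the solution's own marginals are used. The first marginal is trivially \(\mu\). For the second, the Fokker--Planck equation for the \(Y\)-marginal \(q_t\) reads
\[
\partial_t q_t=\nabla\cdot\big(q_t\,\E[\hat c-\tilde c^{\pi_t}\mid Y_t=\cdot\,]\big)+\varepsilon\nabla\cdot(q_t\nabla V)+\varepsilon\Delta q_t .
\]
When the flow is self-consistent, \(\E[\hat c(X,Y_t,(\pi_t)_X)\mid Y_t]=\tilde c^{\pi_t}(Y_t)\), so the projected term vanishes. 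Then \(q_t\) solves the Langevin Fokker--Planck equation from \(\nu\), and \(q_t\equiv\nu\) by uniqueness for that linear equation. Uniqueness holds in the class of measures with finite entropy relative to \(\nu\), using the entropy bound from Step 1 and \(\nabla V\in L^p(\nu)\) with \(p>d+2\) (superposition / Bogachev--Röckner--Shaposhnikov results). The divergence bound of Assumption~\ref{ass:cost}(iii) enters in justifying the weak formulation and integration by parts on the density level. This also shows that the self-consistent marginals stay in the admissible class.

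\textbf{Step 3 (fixed point and uniqueness).} I will show that \(\Phi\) is a contraction in relative entropy (or total variation) on \([0,T]\) for small \(T\) and then iterate, following the argument of \cite{conforti2023projected}. By Girsanov,
\[
H(\Phi(P)|\Phi(P'))_{[0,T]}=\tfrac{1}{4\varepsilon}\E^{\Phi(P)}\!\int_0^T|\beta^P_t-\beta^{P'}_t|^2dt .
\]
This requires bounding \(|\hat c(x,y,P_x)-\hat c(x,y,P'_x)|\) and \(|\tilde c^{P}-\tilde c^{P'}|\) by \(\mathrm{TV}\) or entropy distances of the marginals. This is the \textbf{main obstacle}. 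The conditional averaging in \(\tilde c^\pi\) involves \(\pi^y\), which is only Lipschitz in \(\pi\) up to a division by the \(Y\)-density. Since both flows have second marginal \(\nu\), however, \(\pi^y\) has density \(d\pi/d(\mu\otimes\nu)\) in \(x\), so the difference is linear in \(\pi-\pi'\). The same holds for \(\pi_x\). Combining this with the Gaussian bound (ii) via Hölder and Pinsker should yield \(\E\int|\Delta\beta|^2\le C\int_0^T H(P_t|P'_t)dt\), and Grönwall then gives the contraction. For the dependence of \(\hat c\) on \(\rho\), I expect to use the convexity of \(c\) together with the bounds above; if this is insufficient, I would pass to the general equation \eqref{eq:MV-a}. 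There, Theorem~\ref{prop:existence-general-unbounded} and Theorem~\ref{prop:uniqueness-general} give existence and uniqueness under hypotheses I would verify from Assumptions~\ref{ass:marginals}--\ref{ass:cost}, and uniqueness in law follows from the same entropy estimate with \(P,P'\) two solutions.
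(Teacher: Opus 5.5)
Your Step~2 is the right mechanism for marginal preservation; the paper imports it as Lemma~3.2 of \cite{conforti2023projected}. Steps~1 and~3, however, contain a genuine gap, located exactly where you deferred the marginal issue.

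The only a priori control, Assumption~\ref{ass:cost}(ii), bounds $\E^{\pi}[\exp(r^*|\hat c(X,Y,\pi_X)|^2)]$ for $\pi\in\Pi(\mu,\nu)$, with the \emph{same} $\pi$ as integrating measure and as measure argument. In other words, it controls only self-consistent objects. Your scheme uses it on non-self-consistent ones. Novikov in Step~1 is an estimate under $R$ (marginals $\mu\otimes\nu$) with measure argument $(\pi_t)_x$. The Girsanov identity in Step~3 is an expectation under $\Phi(P)$, whose time marginals are not $P_t$ and in general not in $\Pi(\mu,\nu)$. This is because Step~2 preserves the second marginal only under self-consistency, $\E[\hat c(X,Y_t,(P_t)_X)\mid Y_t]=\tilde c^{P_t}(Y_t)$, which fails for a general input. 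So $\Phi$ does not map $\Pi(\mu,\nu)$-valued flows into themselves. If you enlarge the domain to make it do so, you lose both the sub-Gaussian bound and your linearisation of $\tilde c^{P}-\tilde c^{P'}$ through $dP_t/d(\mu\otimes\nu)$, since conditioning on $Y$ again involves dividing by the $Y$-density. Saying the issue is \emph{addressed through the fixed point} presupposes the object being constructed.

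The paper breaks this circle by truncating first. For bounded $a_n=k_n\circ\hat c$, Girsanov is trivial, and solutions are obtained as follows:
\begin{enumerate}[label=(\roman*)]
\item Schauder's theorem for finitely supported $\mu$;
\item approximation of $\mu$ and compactness in $\AW_p$, via the criterion of \cite{eder2019compactness} fed by a weighted Pinsker/Gr\"onwall bound on $x\mapsto(\P^n_t)_x$;
\item passage to the limit at the Fokker--Planck level, and return to the SDE by Proposition~\ref{Prop:mimick}.
\end{enumerate}
These are genuine solutions, hence $\Pi(\mu,\nu)$-valued. Only then is the sub-Gaussian bound applied to them, to remove the truncation.

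Even on the right class, your bound $\E\int_0^T|\Delta\beta|^2dt\le C\int_0^T H(P_t|P'_t)\,dt$ is not available, for two reasons.

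First, Assumption~\ref{ass:cost} gives no quantitative continuity of $\rho\mapsto\hat c(x,y,\rho)$. Convexity of $c$ yields only the integrated monotonicity $\int(\delta_m c(x,\rho)-\delta_m c(x,\rho'))\,d(\rho-\rho')\ge0$, not a pointwise bound on $\partial_y\delta_m c$. Your fallback reduces to the general theorems, and that reduction is exactly the paper's proof of this statement. Those theorems take $\Wc_1$-Lipschitz continuity of the coefficient in $(x,\rho)$ as a hypothesis, Assumption~\ref{ass:a-existence-solution}(i), and it cannot be extracted from convexity.

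Second, converting $\Wc_1((P_t)_x,(P'_t)_x)$ and $\E^{(P_t)^y}[a]-\E^{(P'_t)^y}[a]$ into relative entropy requires weighted Pinsker inequalities. Their constants are conditional exponential moments, which are integrable in $x$ (resp.\ $y$) but not bounded, so H\"older does not close the estimate. After splitting where these moments exceed $M$, you only get $H\le C(1+M)\int_0^T H\,dt+Ce^{-cM}$. This is not a contraction, and making a Picard scheme converge from it would again need uniform bounds along non-self-consistent iterates.

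Such an estimate does yield uniqueness between two actual solutions: take $T$ small, let $M\to\infty$, and iterate in time. This is how the paper proves Theorem~\ref{prop:uniqueness-general}. Existence has to come from compactness instead. The parabolic H\"older estimate of Proposition~\ref{prop:estim} for $y\mapsto\E[\phi(X)\mid Y_t=y]$ is what lets you pass to the limit in the projection term.
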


\paragraph{Main challenges in proof.}
The proof of the well-posedness of \eqref{flow:grad} is more challenging than its counterpart in \cite{conforti2023projected}.
There are two main reasons for this. The first, and most obvious one, is the additional dependence of the cost $c$ on the conditional law with respect to $X$. 
The second reason is that, since there is no diffusion in the $x$-variable, elliptic regularity is harder to exploit than in the existence proof of \cite{conforti2023projected}. 
In their paper, elliptic regularity yields compactness in total variation, which in turn ensures that conditioning with respect to $X_t$ or $Y_t$ is a continuous operation at the level of laws. 
This is not available here, at least not directly, so we need to treat conditioning with respect to $X$ and conditioning with respect to $Y_t$ differently.

For the conditioning with respect to $Y_t$, we observe in Proposition \ref{prop:estim} that, if \eqref{flow:grad} admits a weak solution, then, for every function $\phi:\Xc\to\R$, the conditional expectation
\[
    \E[\phi(X)\mid Y_t]
\]
seen as a Borel-measurable function of $(t,y)$, enjoys some elliptic regularity. 
From this regularity, we derive Sobolev estimates which, handled carefully, are sufficient for our purposes, since only linear functionals of $\Lc(X\mid Y_t)$ are involved.

The conditioning with respect to $X$ is more delicate, because the drift has a nonlinear dependence on $\Lc(Y_t\mid X)$. 
The treatment of this nonlinearity relies on compactness results on the space
\[
    \bigl(\Pc_2(\Xc\times\R^d),\AW_2\bigr),
\]
obtained in \cite{eder2019compactness}. 
It is worth noting that, for these arguments to work, it is more convenient to consider the Fokker--Planck equation solved by the flow of marginal laws associated with the McKean--Vlasov equation. 
This causes no difficulty thanks to the mimicking theorem, Theorem~1.3 in \cite{lacker2022superposition}, whose assumptions are satisfied in our setting, as shown in Proposition \ref{Prop:mimick}.

\subsubsection{Long-time convergence}

The second main result concerns convergence to the minimizer of \eqref{eq:WOT}.

\begin{Theorem}\label{thm:convergence}
Let \(\pi^*\) be the unique minimizer of \eqref{eq:WOT}. Under Assumption~\ref{ass:marginals} and Assumption~\ref{ass:cost}, the solution \((\pi_t)_{t\ge0}\) of
\eqref{flow:grad} converges to $\pi^*$  in the adapted Wasserstein topology as $t \to \infty$.
\end{Theorem}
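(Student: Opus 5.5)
The plan is the standard Lyapunov/LaSalle route for gradient flows, as in \cite{conforti2023projected}, adapted to the adapted Wasserstein setting. There are four steps: an energy dissipation identity, a Fisher-information estimate, compactness of the trajectory in $\AW_2$, and identification of the limit points.

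\textbf{Step 1: energy dissipation.} Using the solution from Theorem~\ref{thm:well-posedness} and Itô's formula on the Fokker--Planck equation of $\pi_t$ (via the mimicking result, Proposition~\ref{Prop:mimick}), I will show
\[
J(\pi_t)+\int_0^t\int \bigl| \hat c(x,y,(\pi_s)_x)-\tilde c^{\pi_s}(y)+\varepsilon\nabla_y\log\tfrac{d\pi_s}{d(\mu\otimes\nu)}(x,y)\bigr|^2\,\pi_s(dx,dy)\,ds\le J(\pi_0).
\]
The integrand is the squared norm of the projected gradient. The derivation proceeds as follows.
\begin{itemize}
\item Differentiate $t\mapsto \int c(x,(\pi_t)_x)\mu(dx)$ using the linear functional derivative, so that $\partial_y\delta_m c=\hat c$.
\item Differentiate the entropy in the usual way.
\item Use $\int \tilde c^{\pi_s}(y)\,\partial_y\log\frac{d\pi_s}{d\mu\otimes\nu}\,d\pi_s=0$. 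This holds because the $Y$-marginal is preserved: $\int \nabla_y \frac{d\pi}{d\mu\otimes\nu}\,\mu(dx)=\nabla_y 1=0$.
\end{itemize}
This computation is rigorous only after regularization. I would mollify, or use the Sobolev estimates of Proposition~\ref{prop:estim} together with the bounded divergence of $\hat c$ from Assumption~\ref{ass:cost}(iii) to justify integrations by parts, and then pass to the limit to obtain the inequality.

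\textbf{Step 2: Fisher-information estimate.} $J(\pi_0)=\int c(x,\nu)\mu(dx)<\infty$ by Assumption~\ref{ass:cost}(i). Also, $J$ is bounded below on $\Pi(\mu,\nu)$: convexity of $c$ gives a linear lower bound through $\delta_m c$, which is controlled by the sub-Gaussian condition (ii) and entropy duality. Hence the dissipation $D(\pi_t)$ is integrable on $[0,\infty)$, and there exist $t_n\to\infty$ with $D(\pi_{t_n})\to 0$.

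\textbf{Step 3: compactness in $\AW_2$.} Along this sequence, $H(\pi_{t_n}\|\mu\otimes\nu)$ is bounded because $J(\pi_t)\le J(\pi_0)$ and $c$ is bounded below. I will combine this entropy bound with the bound on the conditional Fisher information $\int|\nabla_y\log\frac{d\pi}{d\mu\otimes\nu}|^2d\pi$, which follows from $D\to0$ and the $L^2$ control of $\hat c$ given by the sub-Gaussian bound. Together these should give relative compactness in the adapted Wasserstein topology via the criterion of \cite{eder2019compactness}. The idea is that the kernels $x\mapsto\pi_x$ have uniformly controlled densities with respect to $\nu$ in $W^{1,1}$-type norms, so they are compact in total variation in $y$ uniformly. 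I expect \textbf{this step to be the main obstacle}: there is no diffusion in $x$, so continuity of $x\mapsto \pi_x$ is unavailable. I would first try to show that the maps $\pi\mapsto (x\mapsto\pi_x)$ land in a set that is compact in $L^1(\mu;\Pc(\R^d)$ with total variation$)$. This uses Rellich compactness in $y$ (with $p>d+2$ handling $\nabla V$) together with the fact that the first marginal is fixed to $\mu$. That combination yields $\AW$-compactness in the sense of \cite{eder2019compactness}.

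\textbf{Step 4: identification of the limit.} Let $\bar\pi$ be an $\AW_2$ limit point along $t_n$. By lower semicontinuity of $D$ under $\AW$ convergence, $D(\bar\pi)=0$. This says $\varepsilon\nabla_y\log\frac{d\bar\pi}{d\mu\otimes\nu}=-\hat c(x,y,\bar\pi_x)+\tilde c^{\bar\pi}(y)$, which is the first-order condition $\delta_m c(x,\bar\pi_x)(y)+\varepsilon\log\frac{d\bar\pi}{d\mu\otimes\nu}=\varphi(x)+\psi(y)$. By convexity of $c$ and strict convexity of the entropy, this condition characterizes $\pi^*$, so $\bar\pi=\pi^*$.

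To upgrade from the subsequence to the whole flow, I use that $J(\pi_t)$ is nonincreasing and that $J$ is $\AW$-continuous along these sequences, so $J(\pi_t)\downarrow J(\pi^*)$. Every $\AW$ limit point of $(\pi_t)$ then has value at most $J(\pi^*)$ by lower semicontinuity, and is therefore the unique minimizer $\pi^*$. Finally, the sublevel set $\{J\le J(\pi_0)\}$ is $\AW_2$-relatively compact by the argument of Step 3 restricted to its entropy part; this needs care, and may require a coercivity estimate. It follows that $\pi_t\to\pi^*$ in $\AW$.
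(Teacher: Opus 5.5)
\textbf{There is a genuine gap in your Step 3.} The mechanism you propose cannot close it.

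Entropy bounds and conditional Fisher-information bounds constrain each kernel $\pi_x$ separately. At best they make the \emph{set of values} $\{\pi_x\}$ precompact in total variation. The criterion of \cite{eder2019compactness} asks for something else: a uniform modulus of continuity of $x\mapsto\pi_x$, e.g.\ $\sup\{\E^\gamma[\Wc_1(\pi_X,\pi_{X'})]:\gamma\in\Pi(\mu,\mu),\ \E^\gamma[d_\Xc(X,X')]\le\delta\}\to0$ as $\delta\to0$, uniformly over the family. Neither your bounds nor the fixed first marginal control the $x$-dependence.

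Here is a counterexample. Take $\mu$ uniform on $[0,1]$, $\rho_0=(1+\zeta)\nu$ and $\rho_1=(1-\zeta)\nu$ with $0\neq\zeta\in C_c^\infty(\R^d;\R)$, $|\zeta|\le\frac12$, $\int\zeta\,d\nu=0$. Set $\pi^n_x:=\rho_{r_n(x)}$, where $r_n(x)$ is the $n$-th binary digit of $x$. Then $\pi^n\in\Pi(\mu,\nu)$ with constant entropy and constant conditional Fisher information. Yet $\mathcal{R}(\pi^n)\to\mu\otimes\frac12(\delta_{\rho_0}+\delta_{\rho_1})$, which is not carried by a graph $x\mapsto\rho$. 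So no subsequence converges in $\AW_2$ to a coupling.

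Consequently the sublevel set $\{J\le J(\pi_0)\}$ is in general not $\AW_2$-precompact in $\Pc_2(\Xc\times\R^d)$. Along the flow, the only $x$-modulus available is the Gronwall estimate from the existence proof, and it is only available on bounded time horizons.

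The missing idea is to pass to the completion $\mathrm{FP}_2$. Since $\Pi(\mu,\nu)$ is $\Wc_2$-relatively compact, the trajectory is $\AW_2$-relatively compact among filtered processes by the Prokhorov-type result of \cite{BBP26}. The possible loss of self-awareness in the limit is then dealt with afterwards.

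Working in $\mathrm{FP}_2$ shifts the burden onto two facts that your Step 4 takes for granted.

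First, the dissipation must be defined on filtered processes, as in \eqref{eq:WOTfisher}, and shown to be $\AW_2$-lower semicontinuous. This does not follow from Lemma~\ref{lem:lsc-aw}, for two reasons. The term $\tilde c^{\pi}$ is a conditional expectation given $Y$, which is not continuous under adapted convergence. And the cross term $2\E[(\hat c-\tilde c^{\pi})\cdot\nabla_y\log\frac{d\Lc(Y|\Fc_1)}{d\nu}(Y)]$ has no sign. The paper handles this in three moves:
\begin{itemize}
\item It extracts from $\sup_n\tilde I(\X_n)<\infty$ a uniform bound on the expected Fisher information; the $\tilde c$ cross term vanishes by the marginal constraint.
\item It uses that bound to get $H^1_{\rm loc}$ bounds and Rellich compactness for $\tilde c^{\pi_n}$.
\item It treats the $\hat c$ cross term by integration by parts, using the bounded divergence in Assumption~\ref{ass:cost}(iii).
\end{itemize}

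Second, you must show that a zero-dissipation filtered process is self-aware and optimal. Positivity of the conditional densities (Lemma~\ref{lem:positivedensity}) is needed to turn the gradient identity into $\delta_m c(x,\rho)+\log\frac{d\rho}{d\nu}=h(y)+a(x,\rho)$ on all of $\R^d$. Without it, the identity holds only on the possibly disconnected set $\{d\rho/d\nu>0\}$. After that, convexity gives optimality for the relaxed problem $\bar J$ on $\mathrm{FP}(\mu,\nu)$, whose unique minimizer is $\pi^*$.

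Finally, your upgrade from a subsequence to the whole flow needs $J(\pi_{t_n})\to J(\pi^*)$, i.e.\ upper semicontinuity of the entropy along $t_n$, which you do not have. The paper avoids this. It takes an arbitrary $t_n\to\infty$ and shifts to $s\mapsto\pi_{t_n+s}$ on $[0,T]$, which is equicontinuous in $\AW_2$ by synchronous coupling. It then combines $\int_0^T\tilde I(\pi_{t_n+s})\,ds\to0$ with lower semicontinuity to get $\tilde I(\bar\pi_s)=0$ for a.e.\ $s$.
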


\paragraph{Main challenges in proof.} Along solutions to \eqref{flow:grad}, the projected gradient-flow structure
formally yields the dissipation identity
\begin{align}\label{eq:intro-dissipation}
    \frac{d}{dt}J(\pi_t)
    =
    -
    \E^{\pi_t}\!\left[
        \left|
        \hat c(X,Y,(\pi_t)_X)
        -
        \tilde c^{\pi_t}(Y)
        +
        \varepsilon\nabla_y
        \log\frac{d\pi_t}{d(\mu\otimes\nu)}(X,Y)
        \right|^2
    \right].
\end{align}
The squared norm in \eqref{eq:intro-dissipation} is the weak analogue of a
relative Fisher information. To prove convergence, we adopt the usual compactness argument. The first difficulty is that $\Pc_2(\Xc \times \R^d)$ is not complete in the adapted Wasserstein space. To address this issue, in \eqref{eq:WOTfisher} we extend the right-hand side of \eqref{eq:intro-dissipation} to the set of filter processes which is the completion of $\Pc_2(\Xc \times \R^d)$, and hence there exists a limit point by a Prokhorov-type result. The second challenge is the lower semicontinuity of \eqref{eq:intro-dissipation}, in particular for the term $\lVert \tilde c^{\pi}(Y)\rVert_{L^2(\nu)}$ because $\tilde c$ involves the conditional expectation with respect to $Y$. To overcome the difficulty, we prove a uniform bound for $\lVert \tilde c^{\pi}(Y)\rVert_{L^2(\nu)}$ using a uniform estimate on the Fisher information. The third difficulty is to characterize the
zero-dissipation set: from \eqref{eq:intro-dissipation} being $0$, we recover the Euler--Lagrange
condition for the entropic weak optimal transport problem. We show that the
limiting conditional densities are sufficiently regular and positive, and then
use convexity together with the marginal constraints to conclude that the limit is the unique optimizer.

\subsection{Gradient-flow derivation of the McKean--Vlasov SDE}

We now explain the gradient-flow derivation of \eqref{flow:grad}, following the
heuristic calculation in Section~\ref{sec:gradient-flow} with $\Xc=\R^d$. The starting point is
the formal tangent structure of the adapted Wasserstein space. Section~\ref{sec:gradient-flow}
motivates this tangent structure by a small-radius sensitivity argument: in
finite dimensions, optimizing a smooth function over a ball of radius \(r\)
recovers the normalized gradient direction at first order; in the adapted
Wasserstein space, the analogous distributionally robust problem over an
\(\AW_2\)-ball identifies the adapted gradient of the first variation. For a
smooth test function \(\varphi\) and a law \(\pi\) on \(\R^d\times\R^d\), this adapted gradient is
\begin{equation}\label{eq:intro-adapted-gradient}
    \nabla^{\rm ad}_{\pi}\varphi(x,y)
    =
    \left(
        \E^{\pi}[\partial_x\varphi(X,Y)\,|\, X=x],
        \partial_y\varphi(x,y)
    \right).
\end{equation}
The first component is averaged conditionally on \(X\), as first-stage
motions may use only first-stage information. The second component is the usual
\(y\)-derivative, as the second-stage motion may depend on the full pair
\((x,y)\). This is the infinitesimal version of the adapted transport geometry:
ordinary Wasserstein gradients are replaced by adapted gradients.

The projection onto the tangent space of $\Pi(\mu, \nu)$ has been derived in \cite{conforti2023projected}. 
We recover it from Distributionally Robust Optimization sensitivity argument. 
Section~\ref{sec:gradient-flow}
identifies \(\Pi(\mu,\nu)\) as the orthogonal complement of the linear space
\[
    \mathcal V
    =
    \left\{h_1\oplus h_2:\ \mu(h_1)=0,\ \nu(h_2)=0, \ h_1, h_2 \in C_c^{\infty}(\R^d; \R) \right\},
\]
and interprets the tangent space to \(\Pi(\mu,\nu)\) through first-order
sensitivity of small adapted-Wasserstein balls. In one dimension, this gives the
explicit formula for the projection to the tangent space of $\Pi(\mu,\nu)$, denoted by $P^{\pi}$.
\begin{equation}\label{eq:intro-projection}
    P^{\pi}(u_1,u_2)(x,y)
    =
    \left(
        0,
        u_2(x,y)-\E^{\pi}[u_2(X,Y)\mid Y=y]
    \right),
\end{equation}
where $(u_1,u_2)$ is a tangent vector in adapted Wasserstein space. 
The same projection is then used for the formal derivation in general dimension.
Formula \eqref{eq:intro-projection} has two effects. It freezes the first
coordinate, and it removes from the second velocity the component depending only
on \(Y\). This is precisely the centering mechanism that keeps the second
marginal fixed.

With this projection, the adapted descent of \(J\) on \(\Pi(\mu,\nu)\)
is written formally as
\begin{equation}\label{eq:intro-section3-gradient-flow}
    \partial_t\pi_t
    =
    \nabla\!\cdot\left(
        \pi_t\,
        P^{\pi_t}\!\big(
            \nabla^{\rm ad}_{\pi_t}\delta_m J(\pi_t)
        \big)
    \right).
\end{equation}
After some computations, we obtain
\[
\begin{aligned}
    P^{\pi}\big(\nabla^{\rm ad}_{\pi}\delta_m J(\pi)\big)(x,y)
    =
    \Big(&0,
    \hat c(x,y,\pi_x)
    +
    \varepsilon\partial_y\log\rho(x,y)
    \\
    &-
    \int_{\Xc}\hat c(z,y,\pi_z)\,\pi^y(dz)
    -
    \varepsilon\int_{\Xc}\partial_y\log\rho(z,y)\,\pi^y(dz)
    \Big).
\end{aligned}
\]
As the second marginal of \(\pi\) is \(\nu\), by a computation done in Section 2.3 of \cite{conforti2023projected} and recalled in Section~\ref{sec:gradient-flow}, we obtain
\[
    \int_{\Xc}\partial_y\log\rho(z,y)\,\pi^y(dz)
    =
    \partial_y\log\int_{\Xc}\rho(z,y)\,\mu(dz)
    =
    -\nabla V(y).
\]
Therefore the projected adapted gradient has second component
\[
    \hat c(x,y,\pi_x)
    -
    \tilde c^{\pi}(y)
    +
    \varepsilon\partial_y\log\rho(x,y)
    +
    \varepsilon\nabla V(y)
    =
    \hat c(x,y,\pi_x)
    -
    \tilde c^{\pi}(y)
    +
    \varepsilon\nabla_y\log\frac{d\pi}{d(\mu\otimes\nu)}(x,y).
\]
Thus \eqref{eq:intro-section3-gradient-flow} reduces to the 
Fokker--Planck equation
\begin{equation}\label{eq:intro-pde-gradient}
    \partial_t\pi_t
    =
    \nabla_y\!\cdot\left(
        \pi_t
        \left[
            \hat c(x,y,(\pi_t)_x)
            -
            \tilde c^{\pi_t}(y)
            +
            \varepsilon\nabla_y\log\frac{d\pi_t}{d(\mu\otimes\nu)}(x,y)
        \right]
    \right).
\end{equation}
Finally, writing \(\rho_t\) for the density of \(\pi_t\) with respect to
\(\mu(dx)dy\), the entropy term has the density-level form
\[
    \nabla_y\!\cdot\left(
        \varepsilon\rho_t
        \nabla_y\log\frac{d\pi_t}{d(\mu\otimes\nu)}
    \right)
    =
    \varepsilon\Delta_y\rho_t
    +
    \nabla_y\!\cdot(\varepsilon\rho_t\nabla V),
\]
which is the Fokker--Planck operator associated with the drift
\(-\varepsilon\nabla V\) and diffusion coefficient \(\sqrt{2\varepsilon}\). Hence
\eqref{eq:intro-pde-gradient} is exactly the equation of \eqref{flow:grad}.
This calculation also explains the two conditional directions in the SDE: the
weak-transport force $\hat c$ depends on the forward kernel \(x\mapsto(\pi_t)_x=\Lc(Y_t\mid X=x)\),
while the projection depends on the kernel \(y\mapsto\pi_t^y=\Lc(X\mid
Y_t=y)\).

\subsection{Related literature}

This work lies at the intersection of weak optimal transport, entropy
regularization, adapted transport, and Wasserstein gradient flows. Classical
optimal transport and its Wasserstein geometry are standard tools in analysis,
probability, and applied mathematics; see, for instance,
\cite{villani2003topics,villani2009optimal,santambrogio20151}. Weak optimal
transport enlarges the Kantorovich problem by allowing nonlinear costs of
conditional laws. The general weak-cost duality theory was initiated in
\cite{gozlan2017kantorovich}, and existence, duality, and cyclical monotonicity
for weak transport costs were developed in \cite{backhoff2019existence}.
Related structural and application-oriented developments include the
Brenier--Strassen theory of \cite{gozlan2020mixture}, the Hopf--Lax and weak
transfer results of \cite{shu2020hopf}, and the survey of applications in
\cite{backhoff2022applications}. The static existence, duality, and optimizer
representation used in the present paper are based on the fundamental theorem of
weak optimal transport \cite{BPRS25}.

Weak transport is also closely connected with martingale optimal transport and
transport problems with conditional moment constraints. Martingale optimal
transport was introduced in model-independent finance in
\cite{beiglbock2013model}; see also \cite{henrylabordere2017model}. Dynamic and
Benamou--Brenier-type formulations of martingale transport are studied in
\cite{huesmann2019benamou,martingaleBenamouBrenierProb2020}. Stability results
for martingale and weak transport are obtained in
\cite{backhoff2022stability}, while stability for weak martingale optimal
transport is developed in \cite{Beiglbock2021StabilityOT}. Entropic martingale
transport and martingale Schr\"odinger bridge problems are studied in
\cite{nutz2024martingale,chen2024convergence}; weak transport with moment
constraints and entropic regularization is considered in \cite{carlier2025weak}.

The entropy term in \eqref{eq:WOT} connects the paper with the
Schr\"odinger/Sinkhorn viewpoint on transport, going back to Schr\"odinger's
problem \cite{schrodinger1931umkehrung}. Modern accounts of the
Schr\"odinger problem and its relation with optimal transport include
\cite{leonard2014survey,mikami2008optimal,chen2016relation,hernandez2025marginal}. On the
computational side, entropic regularization is strongly associated with Sinkhorn
scaling \cite{cuturi2013sinkhorn}; see also the monograph
\cite{peyre2019computational}. In the linear-cost case, \eqref{eq:WOT} reduces
to the usual entropic optimal transport problem. In the weak case, the entropy
still gives a Gibbs-type representation, but it is implicit because
\(\delta_m c(x,\pi_x)\) depends on the unknown conditional law. Our contribution
is not a Sinkhorn-type discrete iteration, but a continuous-time projected
diffusion whose invariant limit is the entropic weak transport optimizer.

The geometric framework is the adapted Wasserstein distance. This distance is
closely related to the nested distance introduced in multistage stochastic
optimization \cite{pflug2012distance,pflug2014multistage}, and to causal and
bicausal transport
\cite{lassalle2013causal,backhoff2017causal,acciaio2020causal}. We use the
formulation of the Wasserstein space of stochastic processes developed in
\cite{BBP26}, where convergence is expressed through information maps and hence
controls conditional laws. Compactness and topology for adapted weak convergence
are studied in \cite{eder2019compactness,pammer2024note,BBPSZ25,BPSZ23}, and statistical and
stability aspects of adapted Wasserstein distances are developed in
\cite{backhoff2020adapted,backhoff2022estimating}. Computational and entropic
methods for adapted optimal transport are treated in
\cite{eckstein2024computational}, while recent duality and dual attainment
results appear in \cite{krsek2025dual}. This conditional geometry is also
related to absolutely continuous curves of stochastic processes
\cite{acciaio2025absolutely} and to sensitivity analysis for adapted or causal
distributionally robust optimization
\cite{bartl2021sensitivity,bartl2022sensitivity,jiang2024sensitivity,sauldubois2024first,sauldubois_modelrisk_statichedging_cdro}.

The gradient-flow perspective builds on the classical interpretation of
Fokker--Planck equations as Wasserstein gradient flows, beginning with the
Jordan--Kinderlehrer--Otto scheme \cite{jordan1998variational} and developed in
the metric-space theory of Ambrosio, Gigli, and Savar\'e
\cite{ambrosio2008gradient}; see also Otto's formal Riemannian calculus
\cite{otto2001geometry} and the overview \cite{santambrogio2017euclidean}. The
closest antecedent for the dynamical part is the projected Langevin dynamics of
\cite{conforti2023projected}. In that work, the authors construct a diffusion on
the set \(\Pi(\mu,\nu)\) for the classical entropic optimal transport problem;
conditional expectation terms in the drift keep the marginals fixed, and the
long-time limit is the entropic optimal transport optimizer. The present paper
considers this projected-flow philosophy in a weak-transport direction. First, the
cost is nonlinear in the conditional law, so the drift has a McKean--Vlasov
dependence on \(\Lc(Y_t | X)\). Second, only the \(Y\)-coordinate diffuses,
while \(X\) remains fixed as the conditioning variable. Third, the convergence
analysis is carried out in the adapted topology, because the limiting object is
determined by conditional kernels and not merely by the joint law in the ordinary
Wasserstein sense. The mimicking and superposition ideas used in the
well-posedness proof are in the spirit of
\cite{brunick2013mimicking,lacker2022superposition}.

\subsection{Organization of paper}

The rest of the paper is organized as follows. Section~\ref{sec:notation}
collects notation and background on functional derivatives, entropy and Fisher
information, weak optimal transport duality, and the adapted Wasserstein
distance. Section~\ref{sec:gradient-flow} formally derives the projected
adapted-Wasserstein gradient flow and obtains \eqref{flow:grad}. Section~\ref{sec:numerics} describes the particle approximation and its applications to optimal
transport and martingale optimal transport examples. Section~\ref{sec:wellposed} and Section~\ref{sec:convergence}
prove Theorem~\ref{thm:well-posedness} and 
Theorem~\ref{thm:convergence} respectively.

\section{Notation and Preliminaries}
\label{sec:notation}

\subsection{General notation}

We write $|\cdot|$ for the Euclidean norm on $\mathbb{R}^d$, $B(x,R)$ for the open
ball of radius $R$ centered at $x$, and $\mathrm{L}$ for the Lebesgue measure.
Throughout the paper, $\Xc$ denotes a complete metric space.

For a complete metric space $\mathcal{Z}$, $\mathcal{P}(\mathcal{Z})$ denotes the set of
Borel probability measures on $\mathcal{Z}$, and for $p\geq 1$,
$\mathcal{P}_p(\mathcal{Z})$ denotes the subset of measures with finite $p$-th
moment, equipped with the $p$-Wasserstein distance $\Wc_p$.

For $\mu\in\mathcal{P}(\Xc)$ and $\nu\in\mathcal{P}(\mathbb{R}^d)$,
$\Pi(\mu,\nu)$ denotes the set of couplings, i.e.\ measures
$\pi\in\mathcal{P}(\Xc\times\mathbb{R}^d)$ with first marginal $\mu$ and
second marginal $\nu$. For $\pi\in\mathcal{P}(\Xc\times\mathbb{R}^d)$, we
denote by $\pi_1$, $\pi_2$ the first and second marginals of $\pi$ respectively, by $\{\pi_x\}_{x\in\Xc}$ the disintegration of $\pi$ with respect
to its first marginal, and by $\{\pi^y\}_{y\in\mathbb{R}^d}$ the disintegration
with respect to its second marginal.
Throughout this paper, once existence has been proved, we will not distinguish between a measure and its density with respect to Lebesgue measure.

On a probability space $(\Omega,\mathcal{F},\P)$ with sub-$\sigma$-algebra
$\mathcal{G}\subseteq\mathcal{F}$, $\mathbb{E}^{\P}[\,\cdot\,|\mathcal{G}]$ denotes
the conditional expectation. When $\mathcal{G}=\sigma(Y)$ for a random variable
$Y$, we write $\mathbb{E}^{\P}_{Y}[\,\cdot\,]$, and identify it with a Borel-measurable function of $Y$.

For a complete metric space $\mathcal{Z}$, we write $C_b(\mathcal{Z}; \R)$ for bounded continuous functions on $\mathcal{Z}$.
$C_c^\infty(\mathbb{R}^d; \R)$ denotes the set of smooth, compactly supported functions on
$\mathbb{R}^d$. Take
\[
C^{1}_{\mathrm{pol},2}(\mathbb{R}^{2d}; \R)
:=
\Big\{ f\in C^{1}(\mathbb{R}^{2d}; \R )\;:\;\exists\,K>0\ \text{s.t.}\ 
\|\nabla f(x)\|\le K\,(1+\|x\|),\ \forall x\in\mathbb{R}^{2d}
\Big\}.
\]
The $2$ being here since having a linear growth in the gradient means a quadratic growth for the function. For any vector space $\mathcal{V} \subset C^{1}_{\mathrm{pol},2}(\mathbb{R}^{2d}; \R)$, define
\begin{equation}\label{eqdef:orthodual}
\mathcal{V}^{\perp} := \left\{ \pi \in \Pc_2(\R^{2d}) \,\, \Big| \,\,  \int_{\R^{2d}} f(x, y) \, \pi(dx, dy) =0 \,\, \text{for all } f \in \mathcal{V} \right\} \subset \Pc_2(\R^{2d}) .
\end{equation}

\subsection{Linear functional derivatives}
\begin{Definition}
A map $F:\mathcal{P}_2(\mathbb{R}^d)\to\mathbb{R}$ admits a
\emph{linear functional derivative} $\delta_m F$ if
\begin{enumerate}[label=(\roman*), ref=(\roman*)]
    \item $\delta_m F:\mathcal{P}_2(\mathbb{R}^d)\times\mathbb{R}^d\to\mathbb{R}$ is continuous, and for every $R>0$ there exists $C_R>0$ such that whenever $\int |x|^2 \, \rho(dx) \leq R$
    \begin{align*}
       | \delta_m F(\rho, y)| \leq C_R(1+|y|^2).
    \end{align*}
\item  It satisfies the normalization $\int\delta_m F(\rho)(y)\rho(dy)=0$, \, $\forall \, \rho \in \Pc_2(\R^d)$. 
\item For any $\rho,\rho' \in \Pc(\R^d)$, it holds that
\[
F(\rho')-F(\rho)
\;=\;
\int_0^1\!\!\int \delta_m F\big((1-t)\rho+t\rho'\big)(y)\,(\rho'-\rho)(dy)\,dt.
\]
\end{enumerate}
\end{Definition}

\subsection{Relative entropy and Fisher information}
\begin{Definition}
Suppose $\mathcal{Y}$ is a Polish space. For $\rho,\sigma\in\mathcal{P}(\mathcal{Y})$, the \emph{relative entropy} is defined as 
\[
H(\rho\,\|\,\sigma)
\;:=\;
\begin{cases}
\displaystyle\int \log\frac{d\rho}{d\sigma}\,d\rho & \text{if } \rho\ll\sigma,\\[4pt]
+\infty & \text{otherwise.}
\end{cases}
\]
\end{Definition}
\begin{Definition}\label{def:fisher}
For $\mu\in\mathcal{P}(\mathbb{R}^d)$ with Lebesgue density
$\rho:=\frac{d\mu}{d\mathrm{L}}$,  the \emph{Fisher information} is
\[
I(\mu)\;:=\;\int_{\mathbb{R}^d}\frac{|\nabla\rho(x)|^2}{\rho(x)}\,dx,
\]
with the convention $I(\mu):=+\infty$ if $\mu$ is not absolutely continuous with respect to Lebesgue measure.
For a reference measure $\nu \in \Pc(\R^d)$, the
\emph{relative Fisher information} of $\mu$ with respect to $\nu$ is
\[
I(\mu\,\|\,\nu)\;:=\;\int\Big|\nabla\log\tfrac{d\mu}{d\nu}(y)\Big|^2\,\mu(dy).
\]
\end{Definition}

\subsection{Weak optimal transport}

The following result, taken from~\cite{BPRS25}, gives the existence, uniqueness,
duality, and an explicit representation of the optimal coupling
for~\eqref{eq:WOT}.

\begin{Lemma}\label{lem:wot-duality}
Under Assumptions~\ref{ass:cost}, problem~\eqref{eq:WOT}
admits a unique minimizer $\pi^*\in\Pi(\mu,\nu)$, and there exists a pair of
potentials $(\phi,\psi)\in L^1(\mu)\times L^1(\nu)$ such that
\begin{align*}
c(x,\rho)+\varepsilon H(\rho\,\|\,\nu)
&\;\geq\;\phi(x)+\rho(\psi),
&&\forall\,(x,\rho)\in\Xc\times\mathcal{P}(\mathbb{R}^d),
\\
c(x,\pi^*_x)+\varepsilon H(\pi^*_x\,\|\,\nu)
&\;=\;\phi(x)+\pi^*_x(\psi),
&&\mu\text{-a.e.\ }x.
\end{align*}
In particular, $\inf_{\pi\in\Pi(\mu,\nu)} J(\pi) = \mu(\phi)+\nu(\psi)$.

Moreover, the optimal coupling is given explicitly in
terms of the dual potential $\psi$ and the cost gradient $\hat{c}$
of~\eqref{eq:def-chat} by
\begin{equation}\label{eq:wot-optimal-density}
\frac{d\pi^*}{d(\mu\otimes \nu)}(x,y)
\;=\;
\exp\!\left(\tfrac{1}{\varepsilon}\big(\phi(x)+\psi(y)
-\delta_m c(x,\pi^*_x)(y)\big)\right).
\end{equation}
Equivalently, the disintegration of the optimal coupling is 
\begin{equation}\label{eq:wot-optimal-disintegration}
\pi^*_x(dy)
\;=\;
\frac{1}{Z(x)}\,
\exp\!\left(-\frac{1}{\varepsilon}\,\delta_m c(x,\pi^*_x)(y)
\;+\;\frac{1}{\varepsilon}\,\psi(y)\right)\nu(dy),
\qquad \mu\text{-a.e.\ }x,
\end{equation}
where $Z(x)$ is the normalizing constant
\[
Z(x)\;=\;\int\exp\!\left(-\tfrac{1}{\varepsilon}\delta_m c(x,\pi^*_x)(y)
+\tfrac{1}{\varepsilon}\psi(y)\right)\nu(dy)
\;=\;\exp\!\big(-\tfrac{1}{\varepsilon}\phi(x)\big).
\]
\end{Lemma}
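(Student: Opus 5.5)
The plan is to treat \eqref{eq:WOT} as a weak optimal transport problem with the modified cost
\[
C(x,\rho):=c(x,\rho)+\varepsilon H(\rho\,\|\,\nu),
\]
using the disintegrated form $H(\pi\|\mu\otimes\nu)=\int H(\pi_x\|\nu)\,\mu(dx)$, and then to invoke the fundamental theorem of weak optimal transport in \cite{BPRS25}. First I will check its hypotheses for $C$.

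\textbf{Convexity and lower semicontinuity.} $C(x,\cdot)$ is convex because $c(x,\cdot)$ is convex by Assumption~\ref{ass:cost} and relative entropy is convex. Moreover $C(x,\cdot)$ is strictly convex on its finiteness domain, since $H(\cdot\|\nu)$ is. Lower semicontinuity of $H(\cdot\|\nu)$ is standard. For $c$ I would use its convexity together with the continuity of $\delta_m c$ and the first-order representation in the definition of the linear functional derivative.

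\textbf{Lower bound.} The integrability bound in Assumption~\ref{ass:cost}(i) bounds $C$ from above on $\mu\otimes\nu$. Applied to $\rho=\nu$ and $\pi=\mu\otimes\nu$, it makes $J(\mu\otimes\nu)<\infty$. A lower bound of the form $C(x,\rho)\ge \alpha(x)+\rho(\beta)$ follows from convexity: linearizing at $\nu$ gives
\[
c(x,\rho)\ge c(x,\nu)+\int\delta_m c(x,\nu)\,d(\rho-\nu),
\]
and the sub-Gaussian condition (ii) controls the linear term against $\varepsilon H(\rho\|\nu)$ through the Donsker--Varadhan inequality.

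\textbf{Existence, uniqueness and duality.} With these bounds, \cite{BPRS25} gives existence of a minimizer. It also gives duality $\inf J=\sup\{\mu(\phi)+\nu(\psi)\}$ with $\phi(x)=\inf_\rho\{C(x,\rho)-\rho(\psi)\}$, so the first inequality holds by construction. Attainment of the dual by $(\phi,\psi)\in L^1(\mu)\times L^1(\nu)$ also comes from \cite{BPRS25}. The equality $\mu$-a.e.\ then follows from complementary slackness: integrating $C(x,\pi^*_x)-\phi(x)-\pi^*_x(\psi)\ge0$ against $\mu$ gives $J(\pi^*)-\mu(\phi)-\nu(\psi)=0$. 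Uniqueness follows because $\pi\mapsto\int C(x,\pi_x)\,\mu(dx)$ is strictly convex along the mixture $\tfrac12(\pi+\pi')$, whose kernels are $\tfrac12(\pi_x+\pi'_x)$ since both couplings have first marginal $\mu$.

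\textbf{Gibbs form.} For $\mu$-a.e.\ $x$, $\pi^*_x$ minimizes $\rho\mapsto c(x,\rho)+\varepsilon H(\rho\|\nu)-\rho(\psi)$. I would perturb $\rho_s=(1-s)\pi^*_x+s\rho'$ and use the linear functional derivative to get the first-order condition
\[
\int\big(\delta_m c(x,\pi^*_x)+\varepsilon\log\tfrac{d\pi^*_x}{d\nu}-\psi\big)\,d(\rho'-\pi^*_x)\ge0 .
\]
This holds for all admissible $\rho'$, so $\delta_m c+\varepsilon\log\frac{d\pi^*_x}{d\nu}-\psi$ is constant $\pi^*_x$-a.e. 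To see that $\pi^*_x$ has full support, I would use the infinite slope of $t\log t$ at $0$. The constant is identified with $\phi(x)$ by integrating against $\pi^*_x$ and using the normalization $\int\delta_m c\,d\pi^*_x=0$ together with the equality case. This yields \eqref{eq:wot-optimal-density}, and \eqref{eq:wot-optimal-disintegration} with $Z(x)=e^{-\phi(x)/\varepsilon}$ follows by normalizing.

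The main obstacle is this last variational step. Differentiating $H$ along mixtures requires integrability of $\log\frac{d\pi^*_x}{d\nu}$ against $\rho'$, and $\psi$ is only in $L^1(\nu)$. I would first restrict to $\rho'$ with bounded density with respect to $\nu$ and then conclude by density.
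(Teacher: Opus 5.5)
The paper gives no argument of its own here: the whole lemma, including the Gibbs representation, is quoted from \cite{BPRS25}. So what matters is whether your explicit steps hold, and the identification of the constant does not. Let $\lambda(x)$ be the constant value of $\delta_m c(x,\pi^*_x)+\varepsilon\log\frac{d\pi^*_x}{d\nu}-\psi$. Integrating against $\pi^*_x$ and using $\int\delta_m c(x,\pi^*_x)\,d\pi^*_x=0$ gives $\lambda(x)=\varepsilon H(\pi^*_x\|\nu)-\pi^*_x(\psi)$. The equality case then turns this into $\lambda(x)=\phi(x)-c(x,\pi^*_x)$, not $\phi(x)$. So the computation you describe actually yields
\[
\frac{d\pi^*}{d(\mu\otimes\nu)}(x,y)=\exp\Big(\tfrac{1}{\varepsilon}\big(\phi(x)-c(x,\pi^*_x)+\psi(y)-\delta_m c(x,\pi^*_x)(y)\big)\Big),
\qquad
Z(x)=\exp\Big(\tfrac{1}{\varepsilon}\big(c(x,\pi^*_x)-\phi(x)\big)\Big).
\]
Conversely, take the logarithm of \eqref{eq:wot-optimal-density}, integrate it against $\pi^*_x$, and compare with the equality case: this forces $c(x,\pi^*_x)=0$ for $\mu$-a.e.\ $x$. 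The constant cost $c\equiv1$ is a counterexample: there $\pi^*=\mu\otimes\nu$ and $\phi+\psi\equiv1$. Thus, with normalization (ii) of the linear functional derivative, \eqref{eq:wot-optimal-density} and the identity $Z(x)=e^{-\phi(x)/\varepsilon}$ are false in general. By contrast, \eqref{eq:wot-optimal-disintegration}, with $Z(x)$ taken as the normalizing integral, is correct. Both become true if $\delta_m c(x,\rho)$ is replaced by the unnormalized derivative $\delta_m c(x,\rho)+c(x,\rho)$, which is exactly $\bar c(x,\cdot)$ in the linear case of Remark~\ref{rem:wot-fixed-point}. The discrepancy is an $x$-dependent factor, so $\partial_y\log\frac{d\pi^*_x}{d\nu}$ is unaffected. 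You should state this correction rather than assert the formula as written.

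Your lower bound is also not available from the hypotheses. Assumption~\ref{ass:cost}(i) is only an upper bound, and (ii) controls $\hat c=\partial_y\delta_m c$ rather than $\delta_m c$. So nothing bounds $c(x,\nu)$ from below, and nothing gives the exponential integrability of $\delta_m c(x,\nu)(Y)$ that Donsker--Varadhan would need. For instance, $c(x,\rho):=-e^{d_{\Xc}(x,x_0)^2}$ is constant in $\rho$, so $\delta_m c=\hat c=0$; it satisfies (i) with $a=b=h=0$, hence all of Assumption~\ref{ass:cost}. Yet if $\int_{\Xc}e^{d_{\Xc}(x,x_0)^2}\mu(dx)=\infty$, which is compatible with $\mu\in\Pc_p(\Xc)$, then $J(\pi)=-\infty$ for every $\pi\in\Pi(\mu,\nu)$ of finite entropy. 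There is then no unique minimizer and no $L^1$ dual pair with $\mu(\phi)+\nu(\psi)=\inf J$. A lower bound such as $c(x,\rho)\ge a'(x)+\rho(b')$ with $a'\in L^1(\mu)$, $b'\in L^1(\nu)$ has to be assumed, not derived. Your uniqueness step also needs it, since it uses that $J(\pi^*)$ is finite.
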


\begin{Remark}\label{rem:wot-fixed-point}
Formula~\eqref{eq:wot-optimal-density} is implicit, since the right-hand
side depends on $\pi^*_x$ through $\delta_m c(x,\pi^*_x)$. It can be read as a
fixed-point characterization of $\pi^*$, and reduces to the standard Sinkhorn
form when $c(x,\rho)=\int \bar c(x,y)\rho(dy)$ is linear in $\rho$: in that case
$\delta_m c(x,\rho)(y)=\bar c(x,y)$ is independent of $\rho$, and
\eqref{eq:wot-optimal-density} becomes the classical entropic optimal transport
density
\[
\frac{d\pi^*}{d(\mu\otimes\nu)}(x,y)
\;=\;\exp\!\Big(\tfrac{1}{\varepsilon}\big(\phi(x)+\psi(y)-\bar c(x,y)\big)\Big).
\]
Differentiating $\log\pi^*_x(y)$ in $y$ recovers the first-order condition
\[
\hat{c}(x,y,\pi^*_x)\;+\;\varepsilon\,\partial_y\log\tfrac{d\pi^*_x}{d\nu}(y)
\;=\;-\partial_y\psi(y),
\]
which is the pointwise stationarity condition that drives the gradient flow
analysis in Section~\ref{sec:gradient-flow}.
\end{Remark}
\subsection{Adapted Wasserstein distance and filtered processes}

We adopt the framework of~\cite{BBP26}. A \emph{two-step filtered process} is a
tuple
\[
{\X}\;=\;\big(\Omega^{\X},\,(\mathcal{F}^{\X}_i)_{i=1,2},\,\P^{\X},\,X,\,Y\big),
\]
where $(\Omega^{\X},\mathcal{F}^{\X}_2,\P^{{\X}})$ is a
probability space, $\mathcal{F}^{\X}_1\subseteq\mathcal{F}^{\X}_2$
is a filtration, and $X$, $Y$ are random variables with values in $\Xc$, $\mathbb R^d$ that are measurable with respect to $\mathcal{F}^{\X}_1$, $\mathcal{F}^{\X}_2$ respectively. We write $\mathrm{FP}_p$ for the set of
(equivalence classes of) two-step filtered processes with
$\mathbb{E}[d_{\Xc}(X,x_0)^p+|Y|^p]<\infty$ for arbitrary $x_0 \in \Xc$, and
$\mathrm{FP}_p(\mu,\nu)\subset\mathrm{FP}_p$ for the subset with marginal laws
$\mathcal{L}(X)=\mu$ and $\mathcal{L}(Y)=\nu$. 


\begin{Definition}[Information map]\label{def:info-map}
For $\X\in\mathrm{FP}_p$, the \emph{information maps} are
\begin{align}\label{eq:info-map}
\mathcal{E}(\X)\;:=&\;\mathcal{L}\!\left(X,\,Y,\,\mathcal{L}(Y\,|\,\mathcal{F}^{\X}_1)\right)
\;\in\;\mathcal{P}\!\left(\Xc\times\mathbb{R}^d\times\mathcal{P}(\mathbb{R}^d)\right),\notag  \\
\mathcal{R}(\X)\;:=&\;\mathcal{L}\!\left(X,\,\mathcal{L}(Y\,|\,\mathcal{F}^{\X}_1)\right)
\;\in\;\mathcal{P}\!\left(\Xc\times\mathcal{P}(\mathbb{R}^d)\right). 
\end{align}
\end{Definition}
The \emph{adapted $p$-Wasserstein distance} on $\mathrm{FP}_p$ admits the
following equivalent characterizations, see~\cite{BBP26}: for
$\X,\X'\in\mathrm{FP}_p$ with $\pi:=\mathcal{L}(X,Y)$,
$\pi':=\mathcal{L}(X',Y')$,
\[
\AW_p(\X,\X')^p
\;=\;
\inf_{\chi\in\Pi(\mathcal{L}(X),\mathcal{L}(X'))}
\int\!\Big[d_{\Xc}(x,x')^p+\Wc_p(\pi_{x},\pi'_{x'})^p\Big]\chi(dx,dx').
\]
The space $(\mathrm{FP}_p,\AW_p)$ is a complete separable metric space,
and a sequence $\X_n$ converges to $\X$ in $\AW_p$ iff
$\mathcal{E}(\X_n)\to\mathcal{E}(\X)$ in $\Wc_p$ iff $\mathcal{R}(\X_n) \to \mathcal{R}(\X)$ in $\Wc_p$.

By a slight abuse of notation, we identify $\pi\in\mathcal{P}_p(\Xc\times\mathbb{R}^d)$
with its canonical filtered process, called \emph{self-aware processes}, and write $\AW_p(\pi,\pi')$
accordingly. Under this identification,
\[
\AW_p(\pi,\pi')^p
\;=\;
\inf_{\chi\in\Pi(\pi_1,\pi'_1)}\int\!\big[d_{\Xc}(x,x')^p+\Wc_p(\pi_x,\pi'_{x'})^p\big]\,\chi(dx,dx').
\]

\begin{Lemma}[Lower semicontinuity in the adapted topology]\label{lem:lsc-aw}
Let $H:\Xc\times\mathbb{R}^d\times\mathcal{P}(\mathbb{R}^d)\to\mathbb{R}$
be lower semicontinuous and bounded from below. Then the map
\[
\mathrm{FP}_p\ni\X\;\longmapsto\;
\mathbb{E}^{\P^{\X}}\!\left[H\!\left(X_1,X_2,\mathcal{L}(X_2\,|\,\mathcal{F}^{\X}_1)\right)\right]
\]
is lower semicontinuous with respect to $\AW_p$.
\end{Lemma}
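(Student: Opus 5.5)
The plan is to reduce the statement to the lower semicontinuity of integral functionals under weak convergence, using the characterization of $\AW_p$-convergence through the information map. Recall that $\X_n\to\X$ in $\AW_p$ if and only if $\mathcal{E}(\X_n)\to\mathcal{E}(\X)$ in $\Wc_p$ on $\Xc\times\R^d\times\Pc(\R^d)$. By definition of $\mathcal{E}$,
\[
\mathbb{E}^{\P^{\X}}\!\left[H\!\left(X_1,X_2,\mathcal{L}(X_2\,|\,\mathcal{F}^{\X}_1)\right)\right]
=\int H(x,y,\rho)\,\mathcal{E}(\X)(dx,dy,d\rho).
\]
Here $X_1,X_2$ play the roles of $X,Y$. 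So the functional factors as $\X\mapsto\mathcal{E}(\X)\mapsto\int H\,d\mathcal{E}(\X)$. The first map is continuous from $(\mathrm{FP}_p,\AW_p)$ into $(\Pc_p,\Wc_p)$, and $\Wc_p$-convergence implies weak convergence. It therefore suffices to show that $m\mapsto\int H\,dm$ is lower semicontinuous for the weak topology on $\Pc(\Xc\times\R^d\times\Pc(\R^d))$.

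The topology on $\Pc(\R^d)$ should be fixed to be the one used in \cite{BBP26} (the $\Wc_p$ topology on $\Pc_p(\R^d)$). The lemma then requires $H$ to be lower semicontinuous with respect to it. In any case, $\Xc\times\R^d\times\Pc_p(\R^d)$ is a metric space. The second step is the standard Portmanteau-type argument. Since $H$ is bounded below by some constant $-K$, set $\tilde H:=H+K\ge0$, which is again lower semicontinuous. On a metric space, a nonnegative lower semicontinuous function is the pointwise increasing limit of bounded Lipschitz functions; one can take the Moreau–Yosida/Baire approximations
\[
H_k(z):=\inf_{z'}\{\min(\tilde H(z'),k)+k\,d(z,z')\}.
\]
For each $k$, weak convergence $m_n\to m$ gives $\int H_k\,dm_n\to\int H_k\,dm$. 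Hence
\[
\liminf_n\int \tilde H\,dm_n\ge\lim_n\int H_k\,dm_n=\int H_k\,dm.
\]
Letting $k\to\infty$ and applying monotone convergence yields $\liminf_n\int\tilde H\,dm_n\ge\int\tilde H\,dm$. Subtracting $K$, which is legitimate because all measures are probabilities, gives the claim.

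The main obstacle I expect is the bookkeeping about topologies rather than any hard estimate. One must confirm that $\AW_p$-convergence yields weak convergence of $\mathcal{E}(\X_n)$ in the topology on $\Pc(\R^d)$ for which $H$ is assumed lower semicontinuous. One must also make sure the value of the functional depends only on the equivalence class of $\X$, which holds because it is a function of $\mathcal{E}(\X)$. If $H$ is only lower semicontinuous for the weak topology on $\Pc(\R^d)$, this causes no issue: $\Wc_p$-convergence implies weak convergence, so lower semicontinuity for the coarser topology implies it for the finer one. Metrizability, which the Baire approximation needs, holds for both topologies.
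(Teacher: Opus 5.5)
Your proposal is correct and follows the paper's argument. You write the functional as $\int H\,d\mathcal{E}(\X)$, use that $\AW_p$-convergence yields $\Wc_p$-convergence (hence weak convergence) of the information maps, and conclude by the Portmanteau theorem for lower semicontinuous functions bounded below. Your inf-convolution approximation $H_k\uparrow H+K$ and the remark on the topology of $\Pc(\R^d)$ only make explicit the details the paper leaves to the citation of Portmanteau.
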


\begin{proof}
By~\eqref{eq:info-map},
$\mathbb{E}^{\P^{\X}}[H(X_1,X_2,\mathcal{L}(X_2|\mathcal{F}^{\X}_1))]
=\int H(x,y,\rho)\,\mathcal{E}(\X)(dx,dy,d\rho)$. Since
$\AW_p$-convergence implies weak convergence of the information maps,
the result follows from the Portmanteau theorem.
\end{proof}

\section{Derivation of projected gradient flow}\label{sec:gradient-flow}

In this section, we heuristically derive the projected gradient flow of \eqref{eq:WOT} on the adapted Wasserstein space with $\Xc=\mathbb R^d$.

\subsection{Tangent space of adapted Wasserstein space}

Let us heuristically derive the tangent space of the adapted Wasserstein space
$(\mathcal P_2(\mathbb R^{2d}),\AW_2)$ from the perspective of
distributionally robust optimization (DRO); see e.g. \cite{bartl2022sensitivity,jiang2024sensitivity,
sauldubois2024first}.

First, let us consider a finite-dimensional analogue. Suppose
$f:\mathbb R^d\to \mathbb R$ is continuously differentiable. Fix
$x_0\in \mathbb R^d$ with $|\nabla f(x_0)|>0$, and let
$
x_r\in \argmax\{f(x): |x-x_0|\le r\}.
$
It can be shown that 
$
x_r
=
x_0
+
r\frac{\nabla f(x_0)}{|\nabla f(x_0)|}
+
\smallO(r).
$
In other words, the first-order displacement
$
\frac{x_r-x_0}{r}
$
recovers the normalized steepest ascent direction of $f$ at $x_0$. This is the
basic geometric idea behind identifying tangent vectors through small-radius
DRO problems.

We now apply the same idea to the adapted Wasserstein space. Let
$\Fc:\mathcal P_2(\mathbb R^{2d})\to \mathbb R$ be linearly
functional differentiable, and fix $\pi_0\in \mathcal P_2(\mathbb R^{2d})$.
Consider the local robust optimization problem
\[
\pi_r
\in
\argmax
\left\{
\Fc(\pi):
\mathcal{AW}_2(\pi,\pi_0)\le r
\right\}.
\]
The analogy with the finite-dimensional case suggests that, for small $r$, the
optimizer $\pi_r$ should be obtained by pushing $\pi_0$ along the
steepest ascent direction of $\Fc$ with respect to the
$\mathcal{AW}_2$ geometry.

To make this precise at a formal level, recall that the first variation of
$\Fc$ at $\pi_0$ is denoted by $\delta_m\Fc(\pi_0)$. In the classical Wasserstein geometry, the relevant gradient would be the full
spatial gradient of $\delta_m\Fc(\pi_0)$. In the adapted Wasserstein
geometry, however, perturbations are constrained by the causal structure: the
first component can only be transported using information available at the
first step, while the second component may depend on both coordinates. This is
why the usual gradient is replaced by the adapted gradient.

For $\pi\in \mathcal P_2(\mathbb R^{2d})$ and
$\varphi\in C_c^2(\mathbb R^{2d}; \R)$, define
\begin{align}\label{eq:adaptedgradient}
\nabla_\pi^{\mathrm{ad}}\varphi(x,y)
:=
\left(
\mathbb E^\pi[\partial_x\varphi(X,Y)\mid X=x],
\partial_y\varphi(x,y)
\right).
\end{align}
The first component is the conditional expectation of $\partial_x\varphi$ given
$X=x$, reflecting the fact that the first-stage perturbation must be adapted to
the first coordinate. The second component remains the usual derivative in the
$y$-variable, since the second-stage perturbation may depend on the full state
$(x,y)$.

According to \cite{sauldubois2024first}, under suitable regularity assumptions, the small-radius DRO expansion in
$\mathcal{AW}_2$ takes the form
\[
\sup_{\mathcal{AW}_2(\pi,\pi_0)\le r}\Fc(\pi)
=
\Fc(\pi_0)
+
r
\left\|
\nabla_{\pi_0}^{\mathrm{ad}}\delta_m\Fc(\pi_0)
\right\|_{L^2(\pi_0)}
+
\smallO(r).
\]
Moreover, the corresponding optimizer satisfies the formal expansion
\[
\pi_r
=
\left(
id
+
r
\frac{
\nabla_{\pi_0}^{\mathrm{ad}}\delta_m\Fc(\pi_0)
}{
\left\|
\nabla_{\pi_0}^{\mathrm{ad}}\delta_m\Fc(\pi_0)
\right\|_{L^2(\pi_0)}
}
+
\smallO(r)
\right)_\#\pi_0 .
\]
Thus the adapted Wasserstein derivative of $\Fc$ at $\pi_0$ is naturally
identified with
\[
\nabla_{\pi_0}^{\mathrm{ad}}\delta_m\Fc(\pi_0)
:\mathbb R^{2d}\to \mathbb R^{2d}.
\]

This calculation suggests the following formal definition of the tangent space.

\begin{Definition}[Formal tangent space]
The tangent space of the adapted Wasserstein space
$(\mathcal P_2(\mathbb R^{2d}),\mathcal{AW}_2)$ at
$\pi\in\mathcal P_2(\mathbb R^{2d})$ is given by
\[
T_\pi^{\mathrm{ad}}\mathcal P_2(\mathbb R^{2d})
\simeq
\overline{
\left\{
\nabla_\pi^{\mathrm{ad}}\varphi:
\varphi\in C_c^\infty(\mathbb R^{2d}; \R)
\right\}
}^{\,L^2(\pi)}.
\]
It is equipped with the Riemannian-like metric
\[
\|v\|_{T_\pi^{\mathrm{ad}}}^2
:=
\int_{\mathbb R^{2d}} |v(x,y)|^2\,\pi(dx,dy).
\]
\end{Definition}

\subsection{Projection on the set of couplings $\Pi(\mu,\nu)$}

In \cite{conforti2023projected}, the authors characterize the tangent cone
$\Tan_{\pi}\big(\Pi(\mu,\nu)\big)$ in the  Wasserstein space $(\Pc_2(\R^{2d}),\Wc_2)$ and, more generally,
for an arbitrary vector subspace $\mathcal{V} \subset C^{1}_{\mathrm{pol},2}(\R^{2d}; \R)$, they identify $\Tan_{\pi}(\mathcal{V}^{\perp})$
(where $\mathcal{V}^{\perp}$ is defined in \eqref{eqdef:orthodual}) by geometric and flow-based arguments.
Here we derive this identification from a complementary viewpoint,
namely through first-order sensitivity analysis of constrained Wasserstein DRO.
This yields, at a formal level, the same characterization as in \cite{conforti2023projected}.

First, let us consider a finite-dimensional analogue. Let $M:=\{x\in\R^d:\ g(x)=0\}$ be a smooth submanifold in $\R^d$, with $g:\R^d\to\R^k$ of class $C^1$ and $\mathrm{rank}\,Dg(x_0)=k$.
Given a smooth function $f:\R^d\to\R$ and a point $x_0\in M$, consider
\[
G(r):=\sup\Big\{f(x):\ x\in M,\ |x-x_0|\le r\Big\}.
\]
Under sufficient regularity, the tangent space of $M$ at $x_0$ is given by $T:=\Ker Dg(x_0)$, and it can be shown that the first-order increase of $G$ at $0$ is
\begin{align}\label{eq:finitetagent}
G'(0)
&= \left|  \mathrm{Proj}_{T} \nabla f(x_0)  \right|
=
\mathrm{dist}\big(\nabla f(x_0), T^\perp\big),
\end{align}
where $\mathrm{Proj}_T \nabla f(x_0)$ denotes the projection of $\nabla f(x_0)$ onto the tangent space $T$. 

Let us turn to the adapted Wasserstein space. Take a vector space 
\begin{align*}
\mathcal{V}:= \left\{ h_1 \oplus h_2 : \, h_1,h_2 \in C^{1}_{\mathrm{pol},2}(\mathbb{R}^{d}; \R), \, \mu(h_1)= \nu(h_2)=0 \right\} \subset C^{1}_{\mathrm{pol},2}(\mathbb{R}^{2d}; \R). 
\end{align*}
It can easily be that $\Pi(\mu,\nu)=\mathcal{V}^{\perp}$ as defined in \eqref{eqdef:orthodual} (see section 2.7 of \cite{conforti2023projected}). 
Consider 
\begin{align*}
    \mathcal{G}(r):=\sup\left\{\mathcal{F}(\pi): \, \pi \in \Pi(\mu,\nu), \, \AW_2(\pi,\pi_0) \leq r \right\},
\end{align*}
where $\mathcal{F}: \Pc_2(\R^{2d}) \to \R$ is sufficiently regular at $\pi_0 \in \Pi(\mu,\nu)$. According to \cite{sauldubois2024first,sauldubois_modelrisk_statichedging_cdro}, we have the first-order expansion of $\mathcal{G}$
\begin{equation}\label{eq:Gprime_ad}
\mathcal{G}'(0)
=
\inf_{h\in\mathcal{V}}\big\|\nabla^{\rm ad}_{\pi_0}\delta_m\Fc(\pi_0,\cdot)-\nabla^{\rm ad}_{\pi} h \big\|_{L^2(\pi_0)}.
\end{equation}
Setting $\nabla^{\rm ad}_{\pi}(\mathcal{V})=\{\nabla^{\rm ad}_{\pi} h : \, h \in \mathcal{V} \}$, we rewrite \eqref{eq:Gprime_ad} as 
\begin{align*}
    \mathcal{G}'(0)= \mathrm{dist}_{L^2(\pi_0)}\left(\nabla^{\rm ad}_{\pi_0}\delta_m\Fc(\pi_0,\cdot), \overline{\nabla^{\rm ad}_{\pi}(\mathcal{V})}^{L^2(\pi_0)} \right)
\end{align*}
In analogy to \eqref{eq:finitetagent}, we heuristically identify the tangent space of $\Pi(\mu,\nu) \subset (  \Pc_2(\R^{2d}) , \AW_2 )$.
\begin{Definition}
    For any $\pi \in \Pi(\mu,\nu)$, we formally identify the tangent space as
\begin{align}\label{eq:adaptedtangentcoupling}
T^{\rm ad}_{\pi}\left( \Pi(\mu,\nu)\right)
&\;\simeq\;
\Big(\overline{\nabla^{\rm ad}_{\pi}(\mathcal{V})}^{\,L^2(\pi)}\Big)^{\perp} \\
& \; = \;
\Big\{
g\in T^{\rm ad}_{\pi}\Pc_2(\R^{2d}):\
\int g(x,y)\cdot \nabla^{\rm ad}_{\pi} h(x,y) \, \pi(dx,dy)=0,
\ \ \forall \, h\in \mathcal{V}
\Big\}. \notag
\end{align}
\end{Definition}

\begin{Proposition}
When $d=1$ and $\mu,\nu \in \Pc_2(\R)$, for any $\pi\in\Pi(\mu,\nu)$,
\begin{equation}\label{eqdef:Tangent space Couplings}
\begin{aligned}
T^{\rm ad}_{\pi}\big(\Pi(\mu,\nu)\big)
& =
\big(L^2(\mu)\oplus L^2(\nu)\big)^{\perp},
\end{aligned}
\end{equation}
where orthogonality is taken in $L^2(\pi;\R^{2})$.
The orthogonal projection onto $T^{\rm ad}_{\pi}\big(\Pi(\mu,\nu)\big)$ is given by
\begin{equation}\label{eqdef:orthogonal projectio space Couplings}
\begin{aligned}
P^{\pi}:\ T^{\rm ad}_{\pi}\!\big(\Pc_2(\R^{2})\big) &\longrightarrow T^{\rm ad}_{\pi}\big(\Pi(\mu,\nu)\big),\\
u=(u_1,u_2) &\longmapsto \Big(0,\ u_2-\E_{Y}^{\pi}\!\big[u_2(X,Y)\big]\Big).
\end{aligned}
\end{equation}
\end{Proposition}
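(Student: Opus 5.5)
We compute $\overline{\nabla^{\rm ad}_{\pi}(\mathcal V)}^{L^2(\pi)}$ explicitly and then read off its orthogonal complement inside $T^{\rm ad}_\pi\Pc_2(\R^2)$, following the definition \eqref{eq:adaptedtangentcoupling}.

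\textbf{Step 1: the gradients of $\mathcal V$.} For $h=h_1\oplus h_2\in\mathcal V$, i.e.\ $h(x,y)=h_1(x)+h_2(y)$, formula \eqref{eq:adaptedgradient} gives
\[
\nabla^{\rm ad}_\pi h(x,y)=\bigl(\E^\pi[h_1'(X)\mid X=x],\,h_2'(y)\bigr)=\bigl(h_1'(x),h_2'(y)\bigr).
\]
Here the conditional expectation is trivial because $h_1'$ depends only on $x$. Hence $\nabla^{\rm ad}_\pi(\mathcal V)=\{(h_1'(x),h_2'(y)) : h_i\in C^1_{\mathrm{pol},2}(\R)\}$; the centering constraints $\mu(h_1)=\nu(h_2)=0$ are immaterial after differentiation, since constants can always be subtracted.

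\textbf{Step 2: density.} Next I would show that
\[
\overline{\nabla^{\rm ad}_\pi(\mathcal V)}^{L^2(\pi)}=L^2(\mu)\oplus L^2(\nu),
\]
meaning pairs $(f(x),g(y))$ with $f\in L^2(\mu)$ and $g\in L^2(\nu)$. Since $\pi\in\Pi(\mu,\nu)$, we have $\|(f(X),g(Y))\|_{L^2(\pi)}^2=\|f\|_{L^2(\mu)}^2+\|g\|_{L^2(\nu)}^2$, so the question reduces to two one-dimensional density statements. In dimension one every continuous $f$ is the derivative of $h_1(x)=\int_0^x f$. If $f$ is continuous with at most linear growth, then $h_1\in C^1_{\mathrm{pol},2}$. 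Continuous functions with compact support are dense in $L^2(\mu)$ for any finite Borel measure on $\R$, and the same holds in $L^2(\nu)$. This gives density, and the reverse inclusion is immediate because the limits stay of product-separated form. This step is precisely where $d=1$ is used: in higher dimension the derivatives of $h_1$ are only gradient fields, and one would need a Helmholtz-type argument. I expect this to be the only point requiring care.

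\textbf{Step 3: complement and projection.} The tangent space is therefore the orthogonal complement of the closed subspace $L^2(\mu)\oplus L^2(\nu)$ within $T^{\rm ad}_\pi\Pc_2(\R^2)$, which gives \eqref{eqdef:Tangent space Couplings}. Elements $u=(u_1,u_2)$ of $T^{\rm ad}_\pi\Pc_2(\R^2)$ are limits of $\nabla^{\rm ad}_\pi\varphi$, so $u_1$ is a function of $x$ alone. This holds because $\E^\pi[\partial_x\varphi\mid X=x]$ is $\sigma(X)$-measurable, and that property is preserved under $L^2$ limits.

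To verify the projection, set $Pu=(0,\,u_2-\E^\pi_Y[u_2])$.
\begin{itemize}
\item[(a)] $Pu$ is orthogonal to every $(f(x),g(y))$: its first component vanishes, and $\E^\pi[(u_2-\E^\pi_Y[u_2])\,g(Y)]=0$ by the tower property.
\item[(b)] The remainder $u-Pu=(u_1(x),\E^\pi_Y[u_2](y))$ lies in $L^2(\mu)\oplus L^2(\nu)$. This uses that $u_1$ is $x$-measurable and applies Jensen's inequality to the conditional expectation.
\item[(c)] $Pu$ belongs to $T^{\rm ad}_\pi\Pc_2(\R^2)$, since both $u$ and $u-Pu$ do. For $u-Pu$, note that $(f(x),g(y))=\nabla^{\rm ad}_\pi(F(x)+G(y))$ with $F'=f$ and $G'=g$, which is approximable by Step 2.
\end{itemize}
By the uniqueness of the orthogonal decomposition in a Hilbert space, $P=P^\pi$, which establishes \eqref{eqdef:orthogonal projectio space Couplings}.
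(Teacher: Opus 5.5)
Your proposal is correct and follows essentially the same route as the paper. Both arguments identify the normal space $\overline{\nabla^{\rm ad}_\pi(\mathcal V)}^{L^2(\pi)}$ with $L^2(\mu)\oplus L^2(\nu)$ via one-dimensional antiderivatives, then obtain $P^\pi$ by subtracting the projection $(u_1,\E^\pi_Y[u_2])$, using that $u_1$ is $\sigma(X)$-measurable. You fill in details the paper leaves implicit (density of $C_c$ in $L^2$, closedness of the normal space, membership of $Pu$ in $T^{\rm ad}_\pi\Pc_2(\R^2)$); for the last point a routine cutoff is also needed, since $F(x)+G(y)\notin C_c^\infty(\R^2)$.
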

\begin{proof}
For the coupling constraint, combining \eqref{eq:adaptedgradient} with \eqref{eq:adaptedtangentcoupling} yields
\[
\big(T^{\rm ad}_{\pi}(\Pi(\mu,\nu))\big)^{\perp}
=
\overline{\Big\{\, (x, y) \mapsto (h_1'(x), h_2'(y)):\ h_1,h_2\in \, C^1_{\rm pol, 2}(\R, \R)\Big\}}^{\,L^2(\pi)}.
\]
Since $h_1'$ (resp.\ $h_2'$) ranges over $L^2(\mu)$ (resp.\ $L^2(\nu)$) up to $L^2$-closure, we obtain
\begin{align}\label{eq:orthogonaltangent}
\big(T^{\rm ad}_{\pi}(\Pi(\mu,\nu))\big)^{\perp}
=
L^2(\mu)\oplus L^2(\nu),
\end{align}
and therefore \eqref{eqdef:Tangent space Couplings}. This agrees with \cite[Eq (2.8)]{conforti2023projected}.

To compute the projection, it is convenient to first determine the orthogonal projection onto the normal space
$\big(T^{\rm ad}_{\pi}(\Pi(\mu,\nu))\big)^{\perp}=L^2(\mu)\oplus L^2(\nu)$.
Let $u=(u_1,u_2)\in T^{\rm ad}_{\pi}(\Pc_2(\R^2))$. As 
$u_1$ is $\sigma(X)$-measurable,  the $L^2(\pi)$-projection of $(u_1,u_2)$ onto $L^2(\mu)\oplus L^2(\nu)$ is 
\[
\Big(u_1,\ \E^{\pi}[u_2(X,Y)\mid Y]\Big)
\in L^2(\mu)\times L^2(\nu),
\]
which yields \eqref{eqdef:orthogonal projectio space Couplings}.    
\end{proof}

\begin{Remark}
For $d \geq 2$, \eqref{eq:orthogonaltangent} doesn't hold anymore. Actually, it is only true that 
\begin{align*}
\big(T^{\rm ad}_{\pi}(\Pi(\mu,\nu))\big)^{\perp}
\subset
L^2(\mu)\oplus L^2(\nu).
\end{align*}
However, in the rest of the paper, we still take \eqref{eqdef:orthogonal projectio space Couplings} in the derivation of gradient flow. 
\end{Remark}

\subsection{Gradient flow}
Now we are ready to derive the gradient flow of $J$ in \eqref{eq:objective} in $(\Pi(\mu,\nu), \AW_2)$,
\begin{align}\label{eq:gradient_flow}
    \partial_t \pi_t=\nabla \cdot \left( \pi_t \;  P^{\pi_t} \left(\nabla^{\rm ad}_{\pi_t}\delta_m J(\pi_t) \right)\right).
\end{align}
Restricted to $\Pi(\mu,\cdot)$, we have its linear derivative
\begin{align*}
    \delta_m J(\pi)(x,y)&=\delta_m c(x,\pi_x)(y)+ \varepsilon \log \left( \frac{d \pi}{d \mu \otimes \nu} \right),\\
    \partial_y \delta_m J(\pi)(x,y) & =\partial_y \delta_m c(x,\pi_x)(y)+\varepsilon \partial_y \log \rho(x,y)+\varepsilon \nabla V(y),
\end{align*}
and its projection $ P^{\pi} \left(\nabla^{\rm ad}_{\pi}\delta_m J(\pi) \right)$ equals 
\begin{align*}
   \left(0, \hat{c}(x,y,\pi_x)+\varepsilon \partial_y \log \rho(x,y)- \int \hat c(z,y,\pi_z) \, \pi^y(dz)-\varepsilon \int \partial_y \log \rho(z,y) \, \pi^y(dz) \right),
\end{align*}
where $\rho$ is the Lebesgue density of $\pi$, and $\hat c(x,y,\pi_x)=\partial_y \delta_m c (x,\pi_x)(y)$. By direct computation
\vspace{-1.5mm}
\begin{equation}\label{eq:conditiondensity}
    \int \partial_y \log \rho(z,y) \pi^y(dz)= \frac{\partial_y \int \rho(z,y) \, dz}{\int \rho(z,y) \, dz}=-\nabla V(y). 
    \vspace{-1.5mm}
\end{equation}
Therefore, we get the SDE corresponding to \eqref{eq:gradient_flow},
\begin{align}\label{eq:SDE}
    dY_t=-\left(\hat c(X,Y_t,(\pi_t)_{X})-\tilde c^{\pi_t}(Y_t) +\varepsilon \nabla V(Y_t)\right)dt+\sqrt{2\varepsilon}\,dB_t, \end{align}
where $\pi_t= \mathcal{L}(X,Y_t)$, and $\tilde c^{\pi}(Y):= \mathbb E^{\pi}_{Y}[\hat c (X, Y,(\pi)_{X})]$.

\section{Numerical Application}\label{sec:numerics}
We illustrate here some applications of the gradient-flow approach to well-known problems related to Martingale Optimal Transport, see \cite{beiglbock2013model} for the founding article and \cite{henrylabordere2017model} for an overview, Weak Optimal Transport, and more generally Weak Martingale Optimal Transport, see \cite{Beiglbock2021StabilityOT}. Throughout this section, we fix a function
$
c : \R \times \Pc(\R) \to \R,
$
which is convex in its measure argument, a probability measure $\mu \in \Pc(\R)$, and a Gibbs measure
$
\nu := e^{-V}\,dx
$
with potential $V$. Given $\varepsilon > 0$ and $\lambda_{\rm M} \geq 0$, we aim to solve
\begin{equation*}
WMOT(\lambda_{\rm M},\varepsilon)
:=
\inf_{\Pi \in \Pi(\mu,\nu)}
\left\{
\E^{\mu}\big[ c(X,\pi_X) \big]
+
\frac{\lambda_{\rm M}}{2} \E^{\mu}
\left[ (
X - \int y \,\pi_X(dy) )^2
\right]
+
\varepsilon H(\pi \mid \mu \otimes \nu)
\right\}.
\end{equation*}

We consider this family of problems because it interpolates, at least formally, between Weak Optimal Transport and Weak Martingale Optimal Transport. More precisely, for suitable choices of the penalization parameter $\lambda_{\rm M}$ and the entropic regularization parameter $\varepsilon$, one expects under irreducibility of $(\mu, \nu)$ (see Remark 2.2 of \cite{nutz2024martingale} for the definition) that
\vspace{-1.5mm}
\[
WMOT(\lambda_{\rm M},\varepsilon)
\underset{\substack{\lambda_{\rm M} \to \infty \\ \varepsilon \to 0}}{\longrightarrow}
\inf_{\pi \in \Pi^{\rm M}(\mu,\nu)}
\E^{\mu}\big[ c(X,\pi_X) \big],
\vspace{-1.5mm}
\]
where $\Pi^{\rm M}(\mu,\nu)$ denotes the set of martingale couplings between $\mu$ and $\nu$, while
\[
WMOT(\lambda_{\rm M},\varepsilon)
\underset{\substack{\lambda_{\rm M} \to 0 \\ \varepsilon \to 0}}{\longrightarrow}
\inf_{\pi \in \Pi(\mu,\nu)}
\E^{\mu}\big[ c(X,\pi_X) \big].
\]
This allows us to recover the two limiting problems of interest. 
To solve these problems numerically, we use a particle method. This allows an efficient simulation of the dynamics, while projecting the particles onto a fixed grid in order to obtain a discrete approximation of the joint law and of the associated conditional laws.
More precisely, assume that $(X^i,Y^i)_{1 \leq i \leq N}$ are simulated particles. Let
\vspace{-1.5mm}
\[
-\infty=x_0 < x_1 < \cdots < x_M=+\infty,
\qquad
-\infty=y_0 < y_1 < \cdots < y_M=+\infty,
\vspace{-1.5mm}\]
be two grids of $\R$, and let
$
\Xc := \{x_0,\ldots,x_{M-1}\},
$ and $
\Yc := \{y_0,\ldots,y_{M-1}\}.
$
We define the discrete probability measure
$\pi_N \in \Pc(\Xc \times \Yc)$ by
\vspace{-1.5mm}
\[
\pi_N(x_k,y_\ell)
:=
\frac{1}{N}
\sum_{i=1}^N
\mathds{1}_{[x_k,x_{k+1})}(X^i)
\mathds{1}_{[y_\ell,y_{\ell+1})}(Y^i),
\qquad
0\leq k,\ell\leq M-1.
\vspace{-1.5mm}
\]
The advantage of this discretization is that the support of $\pi_N$ remains fixed throughout the numerical procedure, while formally
$\pi_N \approx \Lc(X,Y).$
Moreover, since $\pi_N$ is a discrete measure, the conditional laws of $Y$ given $X$ and of $X$ given $Y$ can be computed directly from the corresponding rows and columns of the matrix representation of $\pi_N$. Indeed, for any test function $\varphi : \R \to \R$, if
$
m_k := \sum_{\ell=0}^{M-1} \pi_N(x_k,y_\ell)>0,
$
then
\vspace{-2mm}
\[
\E_{\pi_N}\big[\varphi(Y)\mid X \in [x_k,x_{k+1})\big]
=
\sum_{\ell=0}^{M-1}
\varphi(y_\ell)
\frac{\pi_N(x_k,y_\ell)}{m_k}.
\vspace{-2mm}
\]
In particular,
$
\E_{\pi_N} [Y\mid X \in [x_k,x_{k+1}) ]
=
\sum_{\ell=0}^{M-1}
y_\ell
\frac{\pi_N(x_k,y_\ell)}{m_k}.
$
This provides a direct discrete approximation of the barycenter of the conditional law $\Lc(Y\mid X=x_k)$.

The grids $\Xc$ and $\Yc$ are chosen using quantile interpolation. More precisely, we construct bins adapted to the marginal distributions. For the $X$-grid, with the convention
$
x_0=-\infty,
$ and $
x_M=+\infty,
$
we choose the points $(x_k)_{0\leq k\leq M}$ so that
$
\mu ([x_k,x_{k+1}))
=
\frac{1}{M},
\qquad
0\leq k\leq M-1.
$
Similarly, the grid $\Yc$ is constructed from the quantiles of $\nu$. Since $\mu$ and $\nu$ do not charge $\pm\infty$, these bins form a partition of the real line up to sets of zero mass. Once the particle system has been projected onto the fixed grid, the McKean--Vlasov dynamics are simulated by a classical Euler scheme. The conditional quantities appearing in the drift are recomputed at each time step from the discrete matrix $\pi_N$.

\vspace{5mm}

\begin{algorithm}[H]
\caption{Particle approximation of the regularized WMOT problem}
\label{alg:particle-WMOT}

\KwIn{
Marginals $\mu,\nu = e^{-V} dx$; cost derivative $\nabla_y \delta_m c$; 
regularization parameter $\varepsilon>0$; penalization $\lambda_{\rm M}$, number of particles $N$;
number of grid cells $M$; time step $\Delta t>0$; number of iterations
$n_{\mathrm{iter}}$.
}

\KwOut{
Final particle system $(X^i,Y_{n_{\mathrm{iter}}}^i)_{1\leq i\leq N}$,
discrete coupling $\pi_N^{n_{\mathrm{iter}}}$, and approximation of the
objective value.
}

Construct quantile grids $\Xc=\{x_1,\ldots,x_M\}$ and
$\Yc=\{y_1,\ldots,y_M\}$ associated with $\mu$ and $\nu$\;

Sample $(X^i)_{1\leq i\leq N}$ from $\mu$; initialize $(Y_0^i)_{1\leq i\leq N}$ from $\nu$\;

\For{$n=0,\ldots,n_{\mathrm{iter}}-1$}{

Construct the empirical discrete coupling $\pi_N^n$ by projecting
$(X^i,Y_n^i)_{1\leq i\leq N}$ onto $\Xc\times\Yc$\;

Compute the conditional laws $(\pi_N^n)_X$, namely the discrete laws of
$Y_n$ given $X$, from the rows of the matrix $\pi_N^n$\;

For each particle $i$, compute
\[
\hat{C} \big(X^i,Y_n^i,(\pi_N^n)_{X^i}\big)
:=
\nabla_y \delta_m c( X^i,Y_n^i,(\pi_N^n)_{X^i}\big) + \lambda_{\rm M} \big( X^i - \int z (\pi_N^n)_{X^i} (dz) \big)
,
\]
where $(\pi_N^n)_{X^i}$ denotes the conditional law of $Y_n$ given the bin
containing $X^i$\;

Compute the conditional average
\[
\tilde C_N^n
\simeq
\E_{\pi_N^n}
\left[
\hat{C} \big(X,Y_n^i,(\pi_N^n)_X\big)
\,\middle|\,
Y\in [y_\ell,y_{\ell+1})
\right],
\]
where $Y_n^i\in [y_\ell,y_{\ell+1})$\;

Draw independent standard Gaussian random variables
$(\xi_{n+1}^i)_{1\leq i\leq N}$; update the particles according to the Euler scheme
\[
Y_{n+1}^i
=
Y_n^i
+
\left(
\tilde C_N^n
-
\hat{C} \big(X^i,Y_n^i,(\pi_N^n)_{X^i}\big)
-
\varepsilon \nabla V(Y_n^i)
\right)\Delta t
+
\sqrt{2\varepsilon \Delta t}\,\xi_{n+1}^i.
\]

}

\Return $(X^i,Y_{n_{\mathrm{iter}}}^i)_{1\leq i\leq N}$ and
$\pi_N^{n_{\mathrm{iter}}}$\;

\end{algorithm}

\paragraph{Classical Optimal Transport.}
We first illustrate the outcome of our simulations in the regime $\varepsilon\ll 1$, $\lambda_{\rm M}=0$, and
\[
C(x,\rho):=\int c(x,y)\,\rho(dy),
\qquad
c(x,y)=|x-y|^2.
\]
This corresponds to the classical quadratic optimal transport problem. In dimension one, the optimizer is the monotone rearrangement, or equivalently the Fréchet--Hoeffding coupling. Hence, as $\varepsilon\to 0$, one expects the particles to concentrate on the graph of the increasing transport map. This is what happenss, as shown in the following histograms.

\begin{figure}[H]
   \centering
   \begin{subfigure}[t]{0.24\textwidth}
       \centering
       \includegraphics[width=\textwidth]{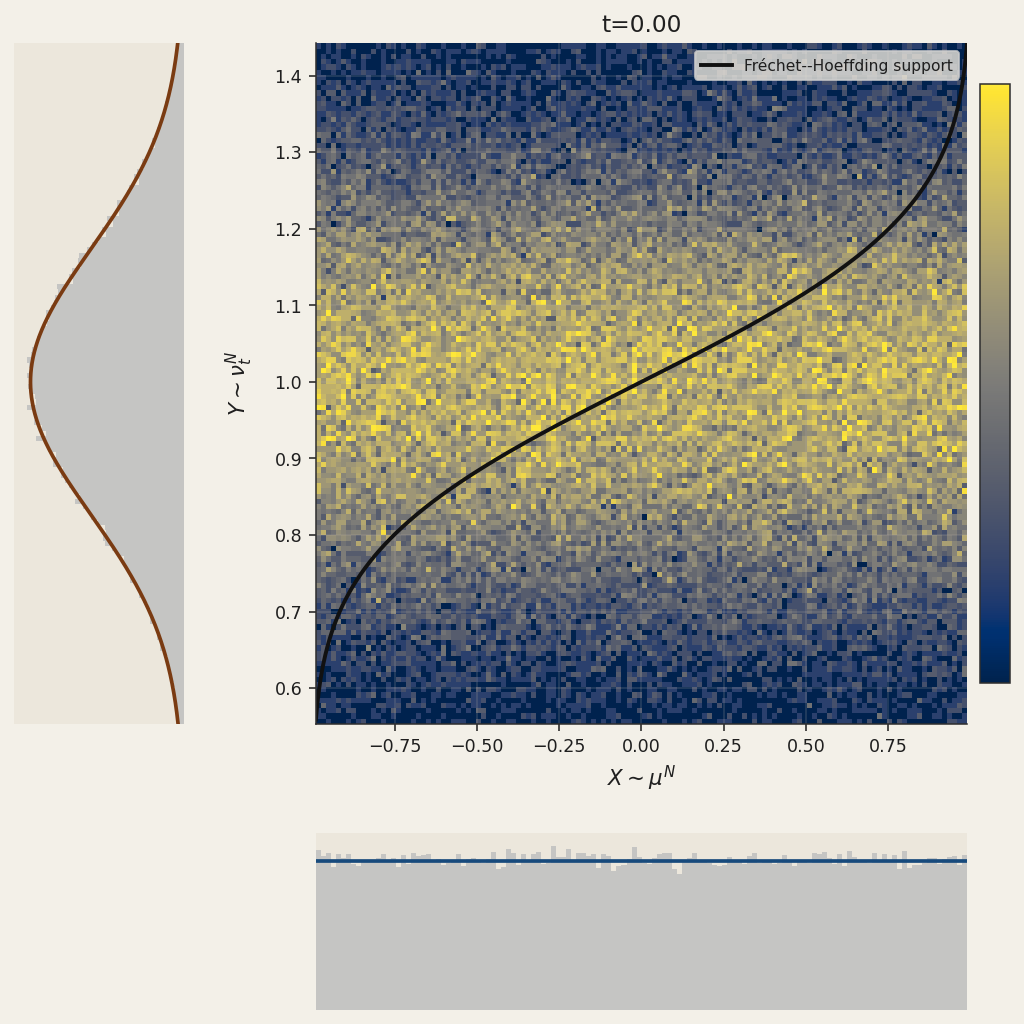}
       \caption{$t=0.00$}
   \end{subfigure}
   \hfill
   \begin{subfigure}[t]{0.24\textwidth}
       \centering
       \includegraphics[width=\textwidth]{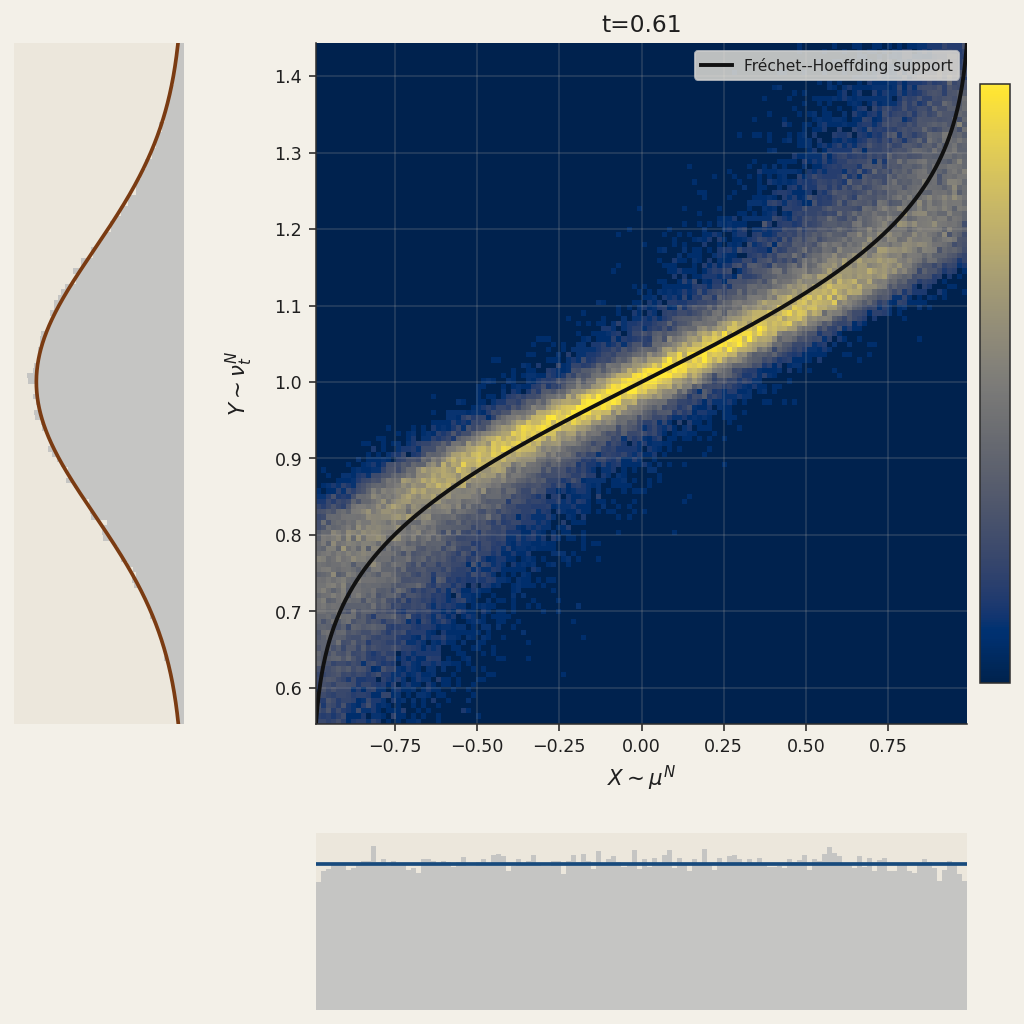}
       \caption{$t=0.61$}
   \end{subfigure}
   \hfill
   \begin{subfigure}[t]{0.24\textwidth}
       \centering
       \includegraphics[width=\textwidth]{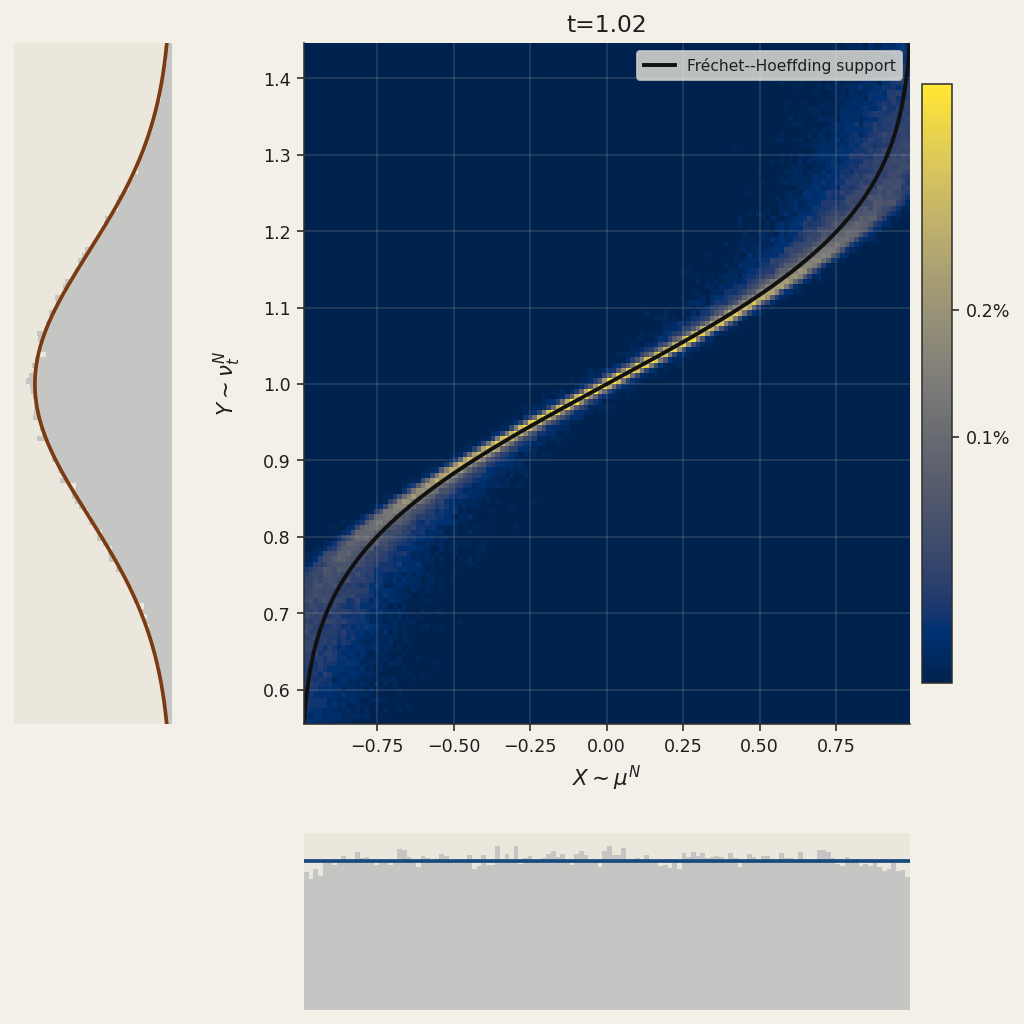}
       \caption{$t=1.02$}
   \end{subfigure}
   \hfill
   \begin{subfigure}[t]{0.24\textwidth}
       \centering
       \includegraphics[width=\textwidth]{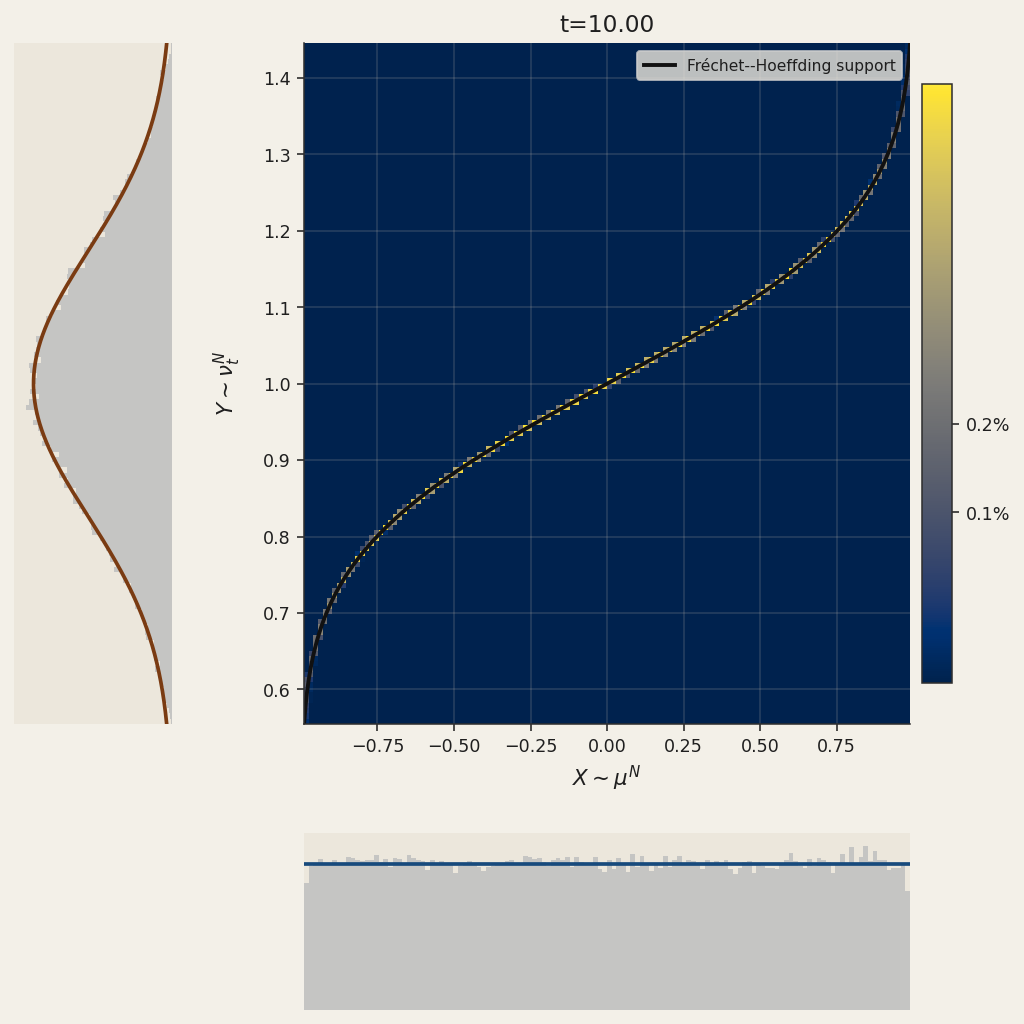}
       \caption{$t=10.00$}
   \end{subfigure}
   \caption{
   Evolution of the empirical coupling for the uniform--Gaussian optimal transport benchmark.
   }
   \label{fig:ot-ug-dynamics}
\end{figure}

The plots show a rapid concentration of the support of the empirical coupling. Starting from a diffuse cloud, the particles progressively align along the increasing transport map associated with the quadratic cost. This graph is precisely the support of the Fréchet--Hoeffding coupling in dimension one. The concentration becomes sharper as time increases, and the final configuration is concentrated close to the optimal transport graph, up to the residual entropic smoothing induced by the small parameter $\varepsilon$.

We also observe convergence of the empirical objective value toward the quadratic optimal transport value as $t$ goes to infinity. In the simulations below, $\varepsilon=10^{-6}$, so the entropic bias is negligible at the scale of the plot. The decay of the objective gap is consistent with an exponential convergence regime, which is the expected behavior for the underlying regularized gradient flow near equilibrium.
\begin{figure}[H]
   \centering
   \includegraphics[width=0.70\textwidth]{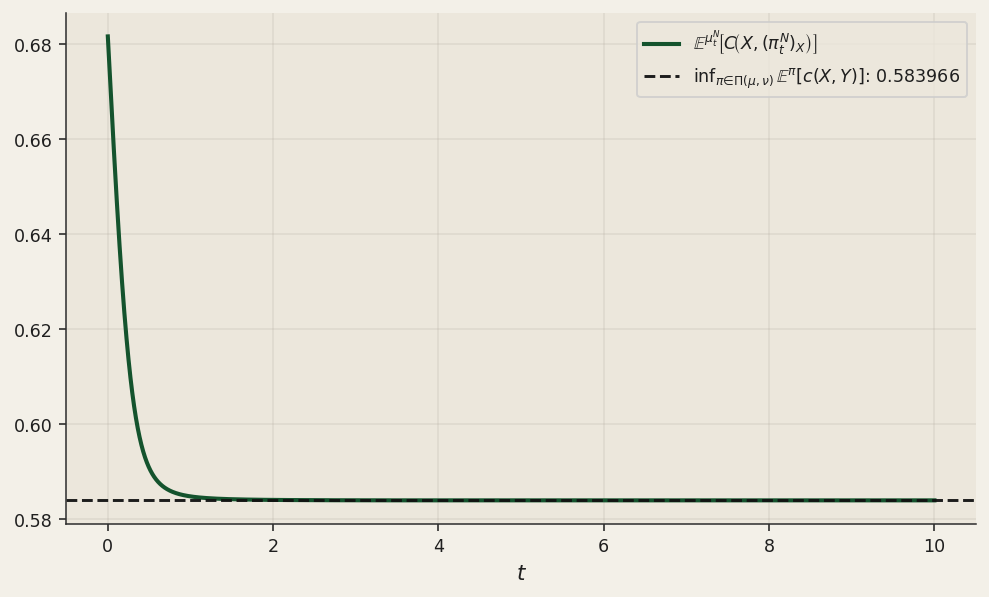}
   \caption{
   Convergence of the empirical objective value for the uniform--Gaussian optimal transport benchmark.
   }
   \label{fig:ot-ug-objective}
\end{figure}

\paragraph{Martingale Optimal Transport.}
This corresponds to the case
\[
c(x,\rho):=\int f(x,y)\,\rho(dy),
\qquad
\lambda_{\rm M}\gg 1 \text{ and } \varepsilon \ll 1.
\]
In one-dimensional Martingale Optimal Transport, under the Spence--Mirrlees condition, the optimal martingale transport is concentrated on the union of two graphs; see \cite{henrylabordere2017model} for their construction. This coupling is usually referred to as the left--monotone coupling, or as the mirror coupling in specific cases. Hence, when $\varepsilon\approx 0$, the empirical law is expected to converge toward it.

\begin{figure}[H]
   \centering
   \begin{subfigure}[t]{0.24\textwidth}
       \centering
       \includegraphics[width=\textwidth]{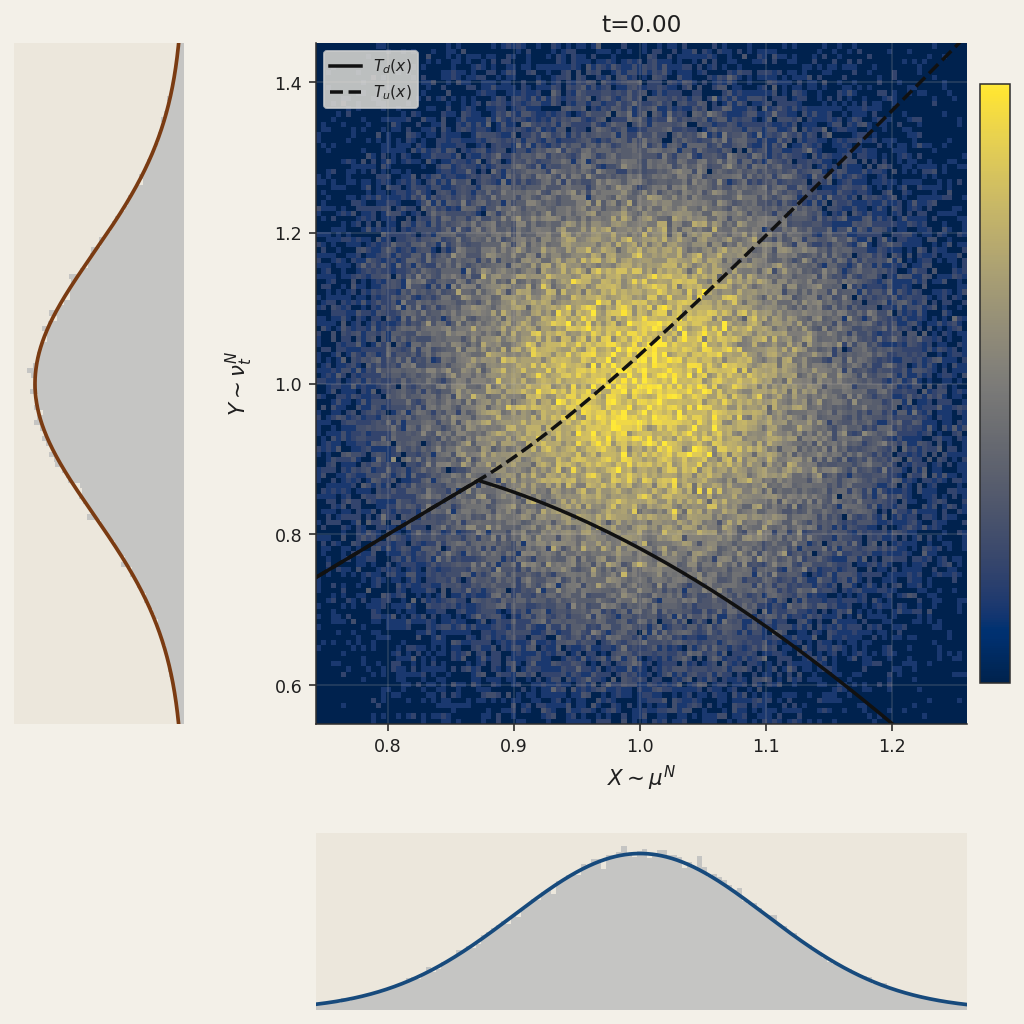}
       \caption{$t=0.00$}
   \end{subfigure}
   \hfill
   \begin{subfigure}[t]{0.24\textwidth}
       \centering
       \includegraphics[width=\textwidth]{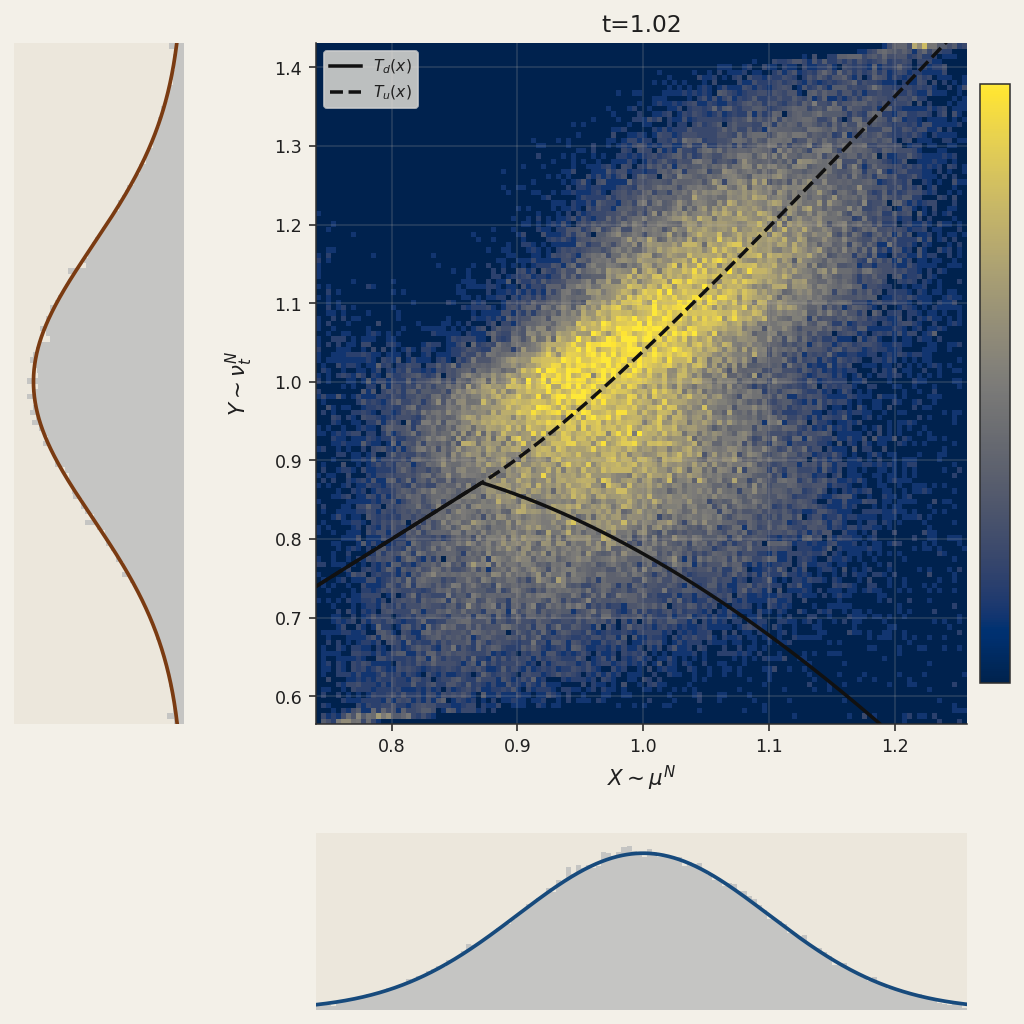}
       \caption{$t=1.02$}
   \end{subfigure}
   \hfill
   \begin{subfigure}[t]{0.24\textwidth}
       \centering
       \includegraphics[width=\textwidth]{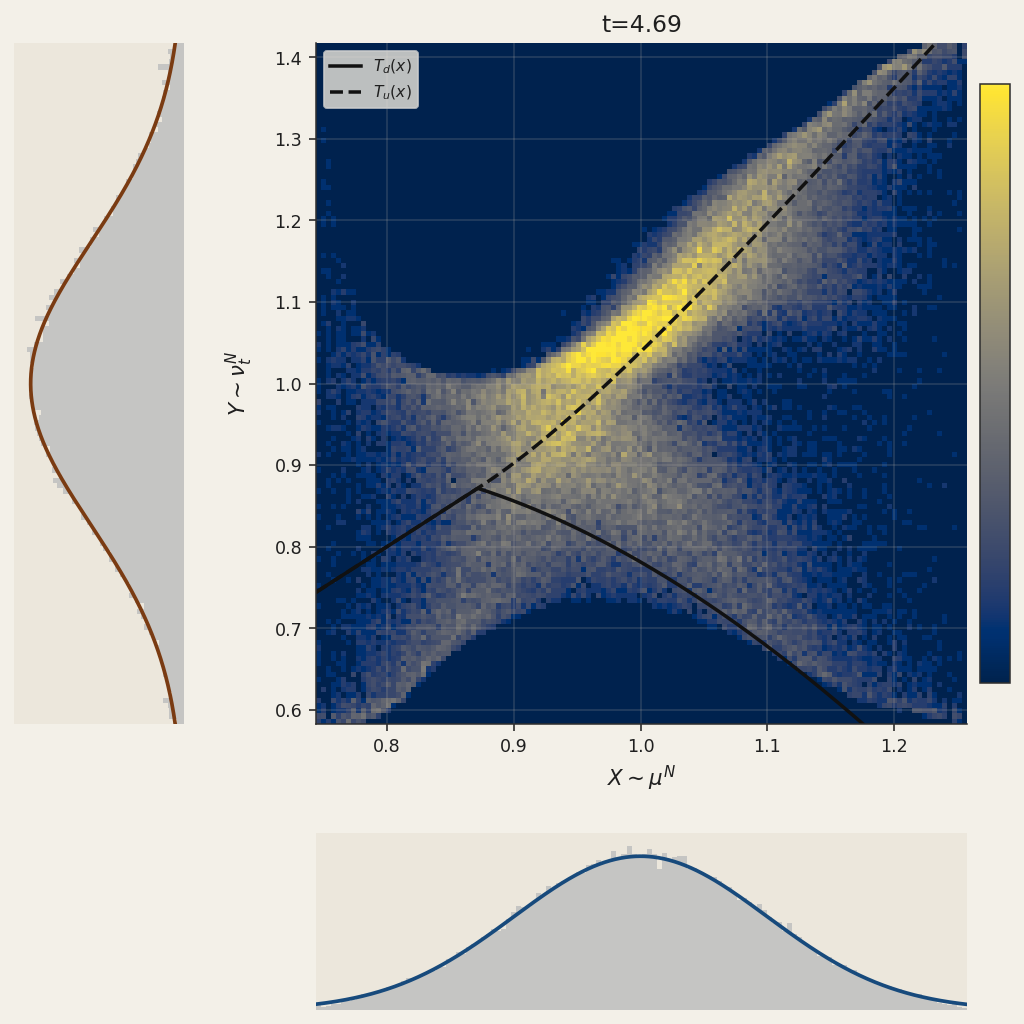}
       \caption{$t=4.69$}
   \end{subfigure}
   \hfill
   \begin{subfigure}[t]{0.24\textwidth}
       \centering
       \includegraphics[width=\textwidth]{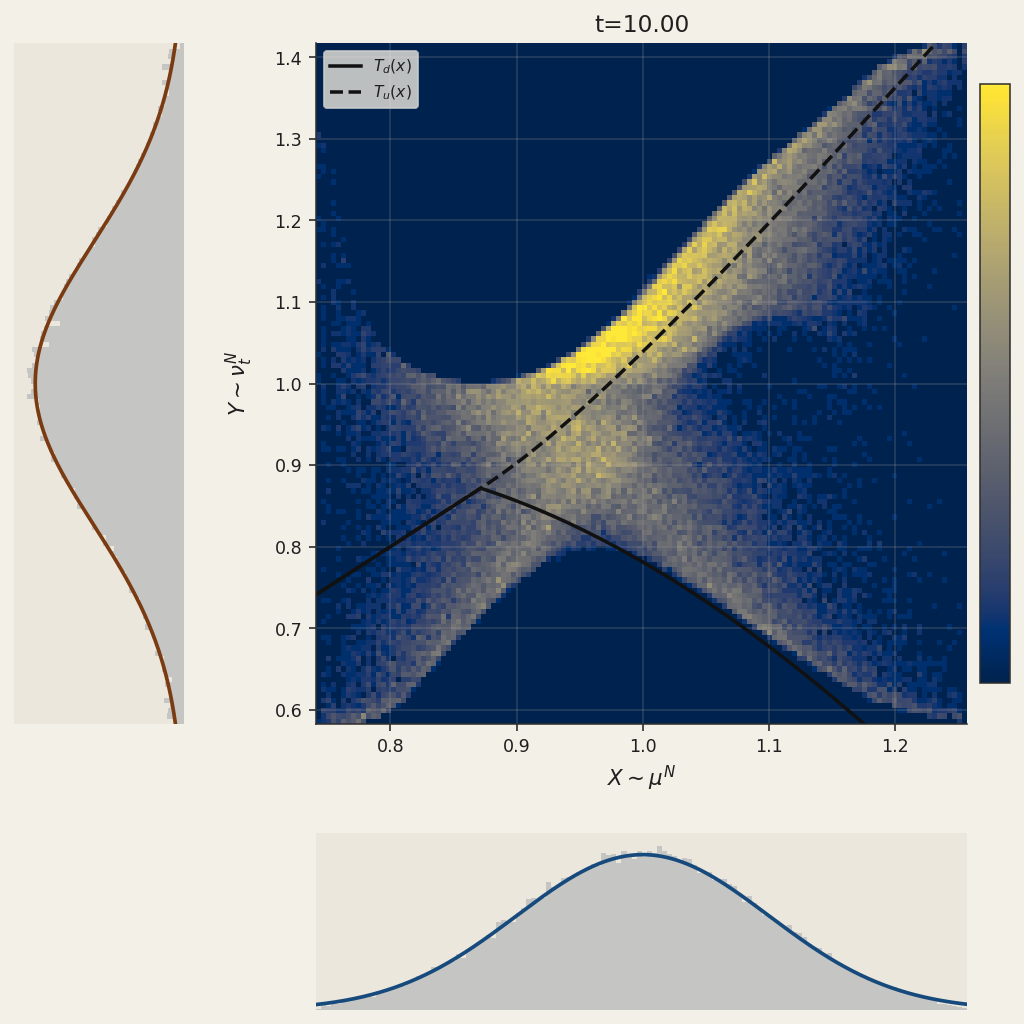}
       \caption{$t=10.00$}
   \end{subfigure}
   \caption{
   Evolution of the empirical coupling for the Gaussian--Gaussian martingale optimal transport benchmark.
   }
   \label{fig:mot-gg-dynamics}
\end{figure}

The histograms show the concentration of the empirical coupling toward the left--monotone martingale coupling, the expected optimizer for the chosen cost under the Spence--Mirrlees condition. The dynamics recover its two-graph structure while keeping the conditional barycenter close to the martingale constraint.

The empirical objective value also converges toward the MOT value. Since $\varepsilon=10^{-6}$, entropic smoothing is negligible at the scale of the plot. The small peaks come from reprojection steps onto the prescribed marginals, which correct the slight deviations caused by discretization and the finite martingale penalty $\lambda_{\rm M}$ during the Euler updates.

\begin{figure}[H]
   \centering
   \includegraphics[width=0.70\textwidth]{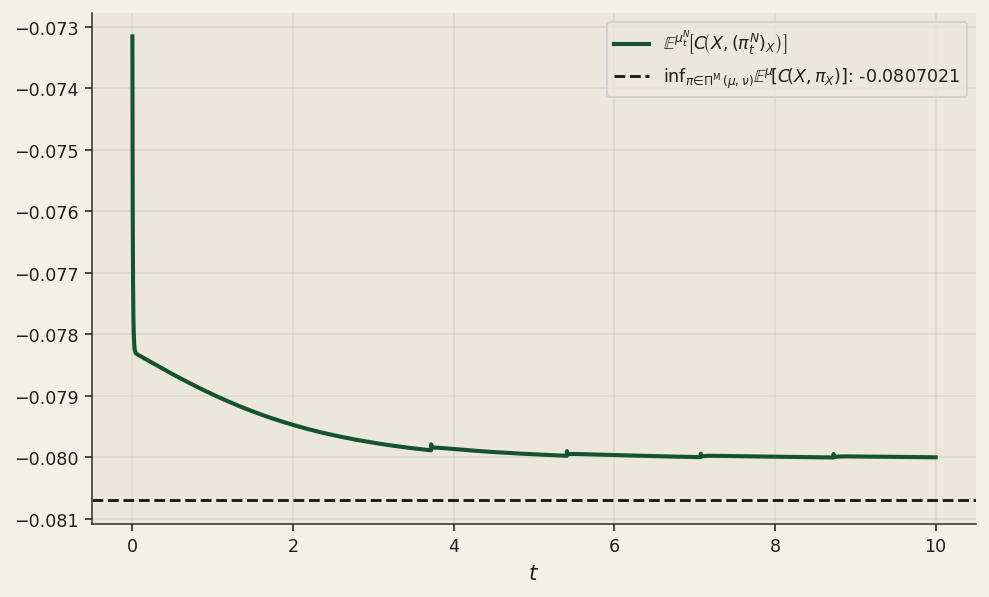}
   \caption{
   Convergence of the empirical objective value for the Gaussian--Gaussian martingale optimal transport benchmark.
   }
   \label{fig:mot-gg-objective}
\end{figure}

\newpage

\section{Well-posedness}\label{sec:wellposed}

We investigate in this section the well-posedness of \eqref{eq:SDE}. Without loss of generality, we only consider the case $\varepsilon =1$. 
Let
$$
    a:\R_+\times\Xc\times\R^d\times\Pc(\R^d)\longrightarrow\R^d
$$
be a measurable coefficient and \( \pi_0\in\Pi(\mu,\nu) \).
We consider
{\usetagform{noparen}
\begin{align}
\tag{MV$(a,\pi_0)$}\label{eq:MV-a}
dY_t
&=
\hat b\bigl(t,X,Y_t,\Lc(X,Y_t)\bigr)\,dt
+\sqrt{2}\,dB_t,
\qquad
(X,Y_0)\sim\pi_0,
\\
\hat b(t,x,y,\pi)
&:=
\E^\pi\!\left[
    a(t,X,y,\pi_X)
    \mid Y=y
\right]
-
a(t,x,y,\pi_x)
-
\nabla_y V(y).
\nonumber
\end{align}
}We next introduce the corresponding Fokker--Planck formulation. A weak
continuous family \((\P_t)_{t\in[0,T]}\subset\Pc(\Xc\times\R^d)\) is said to solve
\eqreftag{eq:Fokker-planck-a} if \(\P_0=\pi_0\) and, for every test function
\(h\in C_b(\Xc\times\R^d;\R)\) such that \(h(x,\cdot)\in C_c^\infty(\R^d; \R)\) for all
\(x\in\Xc\), one has

{\usetagform{noparen}
\begin{equation}
\tag{FP$(a,\pi_0)$}\label{eq:Fokker-planck-a}
\E^{\P_t}[h]
=
\E^{\pi_0}[h]
+
\int_0^t
\E^{\P_s}\!\left[
    \nabla_y h(X,Y)\cdot \hat b(s,X,Y,\P_s)
    +
    \Delta_y h(X,Y)
\right]\,ds .
\end{equation}}

\noindent Since the \(X\)-variable does not diffuse and remains frozen at its initial
value, it is useful to introduce the conditional Fokker--Planck equation. If
\[
    \P_t(dx,dy)
    =
    \mu(dx)\,\P_{t,x}(dy),
\]
then, for \(\mu\)-a.e. \(x\in\Xc\), the conditional flow
\((\P_{t,x})_{t\in[0,T]}\) is expected to satisfy, for all
\(h\in C_c^\infty(\R^d ; \R)\),
{\usetagform{noparen}
\begin{equation}
\tag{CFP$(a,\pi_0)$}\label{eq:cond-Fokker-planck-a}
\E^{\P_{t,x}}[h]
=
\E^{\P_{0,x}}[h]
+
\int_0^t
\E^{\P_{s,x}}\!\left[
    \nabla h(Y)\cdot \hat b(s,x,Y,\P_s)
    +
    \Delta h(Y)
\right]\,ds,
\ \
\mu\text{-a.e. }x .
\end{equation}}Equivalently, since
$
(C_c^\infty(\R^d; \R),
    \|f\|_\infty+\|Df\|_\infty+\|D^2f\|_\infty )
$
is separable, one may require the existence of a measurable set
\(\Xc_0\subset\Xc\), with \(\mu(\Xc_0)=1\), such that
\eqreftag{eq:cond-Fokker-planck-a} holds for every \(x\in\Xc_0\), every
\(t\in[0,T]\), and every \(h\in C_c^\infty(\R^d; \R)\).

\medskip

\noindent We now give the corresponding notions of weak solution.

\begin{Definition}\label{Def:weak-sol:MV-a}
A weak solution of \eqreftag{eq:MV-a} consists of a filtered probability space
\((\Omega,\Fc,\F,\P)\) supporting a \(d\)-dimensional Brownian motion \(B\), a
random variable \((X,Y_0)\) with law \(\pi_0\), and an \(\F\)-adapted continuous
process \(Y=(Y_t)_{t\ge0}\), such that \(B\) is independent of \((X,Y_0)\) and the
following conditions hold.

\begin{enumerate}[label=(\roman*), ref=(\roman*)]
    \item \label{weak-sol:MV-a:int}
    There exists \(p>1\) such that, for every \(T>0\),
    \[
    \int_0^T
    \E\left[
    \left|
    \hat A_t
    -
    a\bigl(t,X,Y_t,\Lc(Y_t\mid X)\bigr)
    -
    \nabla_y V(Y_t)
    \right|^p
    \right]\,dt
    <\infty,
    \]
    where $
        \hat A_t
        :=
        \E [
            a (t,X,Y_t,\Lc(Y_t\mid X) )
            \mid Y_t
        ].
    $

    \item \label{weak-sol:MV-a:sde}
    The process \(Y\) satisfies, in the weak sense,
    \[
    dY_t
    =
    \left(
    \hat A_t
    -
    a\bigl(t,X,Y_t,\Lc(Y_t\mid X)\bigr)
    -
    \nabla_y V(Y_t)
    \right)dt
    +
    \sqrt{2}\,dB_t .
    \]
\end{enumerate}
\end{Definition}

\noindent We say that weak solutions of \eqreftag{eq:MV-a} are unique in law if, whenever two weak solutions are defined on possibly different probability spaces with the same initial law \(\pi_0\), the laws of \((X,Y)\) coincide on
$
    \Xc\times C(\R_+;\R^d).
$
Equivalently, since \(X\) is time-independent, one may identify \((X,Y)\) with
the path \(t\mapsto (X,Y_t)\) in \(C(\R_+;\Xc\times\R^d)\) whenever the latter
notation is meaningful.

\medskip

\begin{Definition}\label{Def:weak-sol:CFP-a}
A weak solution of \eqreftag{eq:cond-Fokker-planck-a} is a measurable map
\[
    \tilde \P:\R_+\times\Xc\longrightarrow \Pc(\R^d),
    \qquad
    (t,x)\longmapsto \tilde\P_{t,x},
\]
together with a Borel vector field
$
    \hat A:\R_+\times\R^d\longrightarrow\R^d,
$
such that the following conditions hold.

\begin{enumerate}[label=(\roman*), ref=(\roman*)]
    \item \label{weak-sol:CFP-a:cont}
    For \(\mu\)-a.e. \(x\in\Xc\), the map
    \(t\mapsto \tilde \P_{t,x}\) is weak continuous.

    \item \label{weak-sol:CFP-a:int}
    There exists \(p>1\) such that, for every \(T>0\),
    \[
    \int_0^T
    \int_\Xc
    \int_{\R^d}
    \left|
    \hat A(t,y)
    -
    a\bigl(t,x,y,\tilde\P_{t,x}\bigr)
    -
    \nabla_y V(y)
    \right|^p
    \tilde\P_{t,x}(dy)\,\mu(dx)\,dt
    <\infty.
    \]

    \item \label{weak-sol:CFP-a:fp}
    For every \(h\in C_c^\infty(\R^d; \R)\) and every \(t\ge0\), one has,
    for \(\mu\)-a.e. \(x\in\Xc\),
    \[
    \E^{\tilde\P_{t,x}}[h]
    =
    \E^{\tilde\P_{0,x}}[h]
    +
    \int_0^t
    \E^{\tilde\P_{s,x}}\!\left[
    \nabla h(Y)\cdot
    \left(
    \hat A(s,Y)
    -
    a\bigl(s,x,Y,\tilde\P_{s,x}\bigr)
    -
    \nabla_y V(Y)
    \right)
    +
    \Delta h(Y)
    \right]ds .
    \]

    \item \label{weak-sol:CFP-a:ce}
    If
    $
    \P_t(dx,dy)
    :=
    \mu(dx)\,\tilde\P_{t,x}(dy),
$
    then, for a.e. \(t\ge0\),
    \[
    \hat A(t,y)
    =
    \E^{\P_t}\!\left[
        a\bigl(t,X,y,(\P_t)_X\bigr)
        \mid Y=y
    \right],
    \qquad
    \P_t\text{-a.s.}
    \]
\end{enumerate}
\end{Definition}

\medskip

\begin{Definition}\label{Def:weak-sol:FP-a}
A weak solution of \eqreftag{eq:Fokker-planck-a} is a weak continuous flow
$
    (\P_t)_{t\ge0}\subset\Pc(\Xc\times\R^d),
$ 
$
    \P_0=\pi_0,
$
together with a Borel vector field
$
    \hat A:\R_+\times\R^d\longrightarrow\R^d,
$
such that the following conditions hold.

\begin{enumerate}[label=(\roman*), ref=(\roman*)]
    \item \label{weak-sol:FP-a:int}
    There exists \(p>1\) such that, for every \(T>0\),
    \[
    \int_0^T
    \int_{\Xc\times\R^d}
    \left|
    \hat A(t,y)
    -
    a\bigl(t,x,y,(\P_t)_x\bigr)
    -
    \nabla_y V(y)
    \right|^p
    \P_t(dx,dy)\,dt
    <\infty.
    \]

    \item \label{weak-sol:FP-a:fp}
    For every \(t\ge0\) and every
    \(h\in C_b(\Xc\times\R^d; \R)\) such that
    \(h(x,\cdot)\in C_c^\infty(\R^d; \R)\) for all \(x\in\Xc\),
    \begin{align*}
    \E^{\P_t}[h]
    =&
    \E^{\P_0}[h] 
    +
    \int_0^t
    \E^{\P_s}\!\Big[
    \nabla_y h(X,Y)\cdot
    \big(
    \hat A(s,Y)
    -
    a\bigl(s,X,Y,(\P_s)_X\bigr)
    -
    \nabla_y V(Y)
    \big)\Big]ds
    \\
    &+
    \int_0^t
    \E^{\P_s}\!\Big[ \Delta_y h(X,Y)
    \Big]ds .
\end{align*}

    \item[(iii)] \label{weak-sol:FP-a:ce}
    For a.e. \(t\ge0\),
    \[
    \hat A(t,y)
    =
    \E^{\P_t}\!\left[
        a\bigl(t,X,y,(\P_t)_X\bigr)
        \mid Y=y
    \right],
    \qquad
    \P_t\text{-a.s.}
    \]
\end{enumerate}
\end{Definition}

\begin{Remark}
In the previous definitions, the exponent \(p>1\) is imposed to ensure the
equivalence between the probabilistic and Fokker--Planck formulations; see
Proposition~\ref{Prop:mimick} below. The individual formulations are meaningful
under weaker integrability assumptions.
\end{Remark}

\medskip

We will systematically identify conditional expectations with Borel-measurable
versions. This is justified by the following standard measurable selection
result; see Proposition~3.3 in \cite{conforti2023projected} and Proposition~5.1
in \cite{brunick2013mimicking}.

\begin{Lemma}\label{Prop:measurable selection}
Let \((\Omega,\Fc,\P)\) be a probability space supporting two processes
\((X_t)_{t\ge0}\) and \((Z_t)_{t\ge0}\), jointly measurable in \((t,\omega)\).
Assume that, for every \(T>0\),
$
    \E [
    \int_0^T |Z_s|\,ds
    ]<\infty.
$
Then there exists a Borel-measurable function
$
    \hat z:\R_+\times\R^d\longrightarrow\R^d
$
such that
\[
    \hat z(t,X_t)
    =
    \E[Z_t\mid X_t],
    \qquad
    \text{for a.e. }t\ge0,\ \P\text{-a.s.}
\]
\end{Lemma}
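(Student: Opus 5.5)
The statement to prove is the measurable selection Lemma~\ref{Prop:measurable selection}. The plan is to build the version $\hat z$ from a regular conditional distribution of the joint time--space variable, rather than selecting a conditional expectation separately for each $t$.

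\textbf{Step 1: a finite product measure.} Fix $T>0$ and work on $[0,T]\times\Omega$ with the finite measure $\mathrm{L}|_{[0,T]}\otimes\P$. By joint measurability, the map $(t,\omega)\mapsto (t,X_t(\omega),Z_t(\omega))$ is measurable. Its image under this measure is a finite Borel measure $m_T$ on $[0,T]\times\R^d\times\R^d$. The hypothesis $\E\int_0^T|Z_s|\,ds<\infty$ says exactly that $z\mapsto |z|$ is integrable with respect to $m_T$.

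\textbf{Step 2: disintegration.} Since $\R^d$ is Polish, disintegrate $m_T$ with respect to its $(t,x)$-marginal $\bar m_T$. This gives a Borel kernel $(t,x)\mapsto k_{t,x}(dz)$. Define
\[
\hat z(t,x):=\int z\,k_{t,x}(dz)
\]
wherever this integral converges absolutely, and $\hat z(t,x):=0$ otherwise. This map is Borel, and by integrability the integral converges for $\bar m_T$-a.e. $(t,x)$. The versions for different $T$ are patched on the intervals $[n,n+1)$, so one Borel function on $\R_+\times\R^d$ is obtained.

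\textbf{Step 3: identification.} For a bounded Borel $g:[0,T]\times\R^d\to\R$, the disintegration gives
\[
\int_0^T \E\big[g(t,X_t)\,Z_t\big]\,dt=\int_0^T\E\big[g(t,X_t)\,\hat z(t,X_t)\big]\,dt .
\]
Apply this with $g(t,x)=\mathds 1_A(t)\,f(x)$, where $A\subset[0,T]$ is Borel and $f$ runs over a countable family of bounded Borel functions generating the Borel $\sigma$-algebra of $\R^d$ (for instance indicators of rational boxes). This yields, for each such $f$, a Lebesgue-null set $N_f$ outside of which
\[
\E\big[f(X_t)Z_t\big]=\E\big[f(X_t)\hat z(t,X_t)\big].
\]
The needed integrability at each fixed $t$, namely $\E|Z_t|<\infty$ and $\E|\hat z(t,X_t)|<\infty$, holds for a.e. $t$ by Fubini.

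\textbf{Step 4: conclusion.} Take the union of the countably many null sets $N_f$. For $t$ outside this union, a monotone class argument upgrades the identity to all bounded $\sigma(X_t)$-measurable test functions. Hence $\hat z(t,X_t)=\E[Z_t\mid X_t]$ $\P$-a.s. for a.e. $t$. If $Z$ is only locally integrable, run the argument for each $T$ and patch as in Step 2.

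\textbf{Main obstacle.} The key difficulty is Step 3: passing from the time-integrated identity to the pointwise-in-$t$ statement. A single null set of times must work simultaneously for all test functions, and only the countable generating family together with the monotone class argument makes this possible. The rest is routine disintegration. This is also the argument of Proposition~5.1 in \cite{brunick2013mimicking}.
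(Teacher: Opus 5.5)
Your proof is correct. The paper gives no proof of this lemma; it only cites it as standard (Proposition~3.3 of \cite{conforti2023projected} and Proposition~5.1 of \cite{brunick2013mimicking}), and your argument is essentially the one behind those references: a finite measure on $[0,T]\times\Omega$, a single Borel function of $(t,x)$, and a countable generating class so that one null set of times works for all test functions. One small point in Step~4: Dynkin's theorem needs the countable family to be a $\pi$-system (half-open rational boxes are) and needs the whole space covered, which you can get by adding $f\equiv 1$ or by dominated convergence along $[-n,n)^d$; "generating the Borel $\sigma$-algebra" alone is not quite enough.
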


\medskip

Throughout this section, the measures \(\mu\) and \(\nu\) satisfy the following
standing assumptions.

\begin{Assumption}\label{ass:Laws existence}
Let \(\Xc\) be a complete metric space, \(\mu\in\Pc(\Xc)\), and $\nu \in \Pc(\R^d)$ with 
\[
    \nu(dy)=e^{-V(y)}\,dy
\]
for some \(C^2\) function \(V:\R^d\to\R\). We assume the following conditions.

\begin{enumerate}[label=(\roman*), ref=(\roman*)]
    \item \label{Well-posed:SDE Ref}
    The following Langevin equation is well-posed in law:
    \begin{equation}\label{eqdef:refLangevin}
        dY_t
        =
        -\nabla V(Y_t)\,dt
        +
        \sqrt{2}\,dB_t,
        \qquad
        Y_0\sim\nu.
\end{equation}

    \item \label{ass:Laws existence:p-moment}
    There exists \(p>d+2\) such that
    $
        \nabla V\in L^p(\nu),
    $ and $
        \mu\in\Pc_{p}(\Xc)$.
\end{enumerate}
\end{Assumption}

\begin{Remark}
A sufficient condition for Assumption~\ref{ass:Laws existence} \emph(i) is that \(V\in C^2(\R^d;\R)\), there exists
    \(\lambda\in\R\) such that
    \[
        D^2V(y)  \succeq \lambda I_d, \quad \forall \, 
       y\in\R^d,
    \]
    in the sense of symmetric matrices, and $|\nabla V|$ has sub-linear growth. It is what we take in Assumption~\ref{ass:marginals}.
\end{Remark}

\subsection{Preliminary results}
In order to prove the existence and uniqueness of weak solutions to
\eqreftag{eq:MV-a}, we first establish a mimicking result, which is a consequence of Theorem 1.3 in \cite{lacker2022superposition}.

\begin{Proposition} \label{Prop:mimick}
Let
\[
a : \R_+ \times \Xc \times \R^d \times \Pc(\R^d)\to \R^d.
\]
The following statements are equivalent:
\begin{enumerate}[label=(\roman*), ref=(\roman*)]
    \item \label{ass:existMV-a}
    Equation \eqreftag{eq:MV-a} admits a weak solution in the sense of Definition \ref{Def:weak-sol:MV-a}.
Furthermore, for $t\ge0$,
    $
    \Lc(X,Y_t)\in\Pi(\mu,\nu).
    $
    \item \label{ass:existCFP-a}
    Equation \eqreftag{eq:cond-Fokker-planck-a} admits a weak solution in the sense of Definition \ref{Def:weak-sol:CFP-a}. 
    Furthermore for $t\ge0$, 
    $
    \P_t(dx,dy)
    :=
    \tilde{\P}_{t,x}(dy)\,\mu(dx) \in \Pi(\mu,\nu)
    .$

    \item \label{ass:existFP-a}
    Equation \eqreftag{eq:Fokker-planck-a} admits a weak solution in the sense of Definition \ref{Def:weak-sol:FP-a}.
    for $t\ge0$,
    $
    \P_t\in\Pi(\mu,\nu).
    $
\end{enumerate}
\end{Proposition}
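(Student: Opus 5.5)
I will prove the cycle of implications (i)$\Rightarrow$(iii)$\Rightarrow$(ii)$\Rightarrow$(iii)$\Rightarrow$(i). The first three are essentially bookkeeping with Itô's formula and disintegration. The last is the mimicking step, where Theorem~1.3 of \cite{lacker2022superposition} does the work.

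\textbf{(i)$\Rightarrow$(iii).} Given a weak solution $(X,Y)$, set $\P_t:=\Lc(X,Y_t)$. Continuity of $Y$ gives weak continuity of $t\mapsto\P_t$. By Lemma~\ref{Prop:measurable selection}, applied with $Z_t=a(t,X,Y_t,\Lc(Y_t\mid X))$ (integrable by Definition~\ref{Def:weak-sol:MV-a}\ref{weak-sol:MV-a:int}), there is a Borel $\hat A$ with $\hat A(t,Y_t)=\hat A_t$ for a.e.\ $t$. Since $(\P_t)_X=\Lc(Y_t\mid X)$, condition (iii) of Definition~\ref{Def:weak-sol:FP-a} holds, and the $L^p$ bound transfers verbatim.

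For a test function $h$ with $h(x,\cdot)\in C_c^\infty$, $X$ is $\Fc_0$-measurable and frozen. So Itô's formula applied to $y\mapsto h(X,y)$ gives a local martingale whose integrand $\sqrt2\,\nabla_yh(X,Y_s)$ is bounded, hence a true martingale. Taking expectations yields \eqreftag{eq:Fokker-planck-a}, and $\P_t\in\Pi(\mu,\nu)$ is assumed.

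\textbf{(iii)$\Leftrightarrow$(ii).} Since $\P_t\in\Pi(\mu,\nu)$, the first marginal is $\mu$ for all $t$, so we may disintegrate $\P_t=\mu(dx)\tilde\P_{t,x}(dy)$. Joint measurability of $(t,x)\mapsto\tilde\P_{t,x}$ follows from weak continuity of $\P_t$ together with a measurable version of disintegration.

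\begin{itemize}
\item To get (ii) from (iii), test (iii) with $h(x,y)=g(x)f(y)$ for $g\in C_b(\Xc)$ and $f\in C_c^\infty$. Fubini and the arbitrariness of $g$ give the identity $\mu$-a.e.\ for each fixed $(f,t)$. A countable dense family of $f$ (separability, as noted after \eqreftag{eq:cond-Fokker-planck-a}) and rational $t$ give a common null set. Continuity in $t$ of both sides then extends the identity to all $t$, and in particular gives weak continuity of $t\mapsto\tilde\P_{t,x}$ for $\mu$-a.e.\ $x$, using the $L^1$ bound on the drift for a.e.\ $x$.
\item To get (iii) from (ii), approximate a general $h$ by finite sums of products, or simply integrate the conditional identity against $\mu$ with $h(x,\cdot)$ in place of $f$. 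A measurability-in-$x$ argument is needed here; a monotone class argument suffices.
\end{itemize}

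\textbf{(iii)$\Rightarrow$(i).} This is the main obstacle. Freeze the flow and the field $\hat A$, and define the time-dependent, non-McKean drift on the state space $\Xc\times\R^d$,
\[
\beta(t,x,y):=\hat A(t,y)-a\bigl(t,x,y,(\P_t)_x\bigr)-\nabla V(y),
\]
with zero drift and zero diffusion in the $x$-component. Then $(\P_t)$ is a weak solution of the linear Fokker--Planck equation with generator $L_t=\beta\cdot\nabla_y+\Delta_y$, and the integrability $\int_0^T\!\int|\beta|^p\,d\P_t\,dt<\infty$ holds with $p>1$, in fact only $p\ge1$ is needed for superposition.

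I would invoke the superposition principle (Theorem~1.3 of \cite{lacker2022superposition}, or Figalli/Trevisan in the Euclidean case) to obtain a probability measure on $C(\R_+;\Xc\times\R^d)$ with time marginals $\P_t$ that solves the martingale problem for $L_t$.

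The delicate points are the following.
\begin{enumerate}
\item The $X$-coordinate lives in a general complete metric space. Superposition results are stated on Polish spaces, and the absence of diffusion in $x$ means the paths are constant in $x$. I would handle this by conditioning on $X=x$ and applying the Euclidean superposition principle to each $\tilde\P_{\cdot,x}$ with drift $\beta(\cdot,x,\cdot)$, using the $\mu$-a.e.\ integrability from (ii). Then I would glue the resulting path laws measurably in $x$, via measurable selection of solutions of martingale problems. Alternatively, I would check that the cited theorem already allows a general Polish parameter space.
\item Once we have a martingale problem solution, Lévy's characterization on an enlarged space produces a Brownian motion $B$ with $dY_t=\beta(t,X,Y_t)\,dt+\sqrt2\,dB_t$. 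Independence of $B$ from $(X,Y_0)$ holds because the martingale problem is posed relative to a filtration containing $\sigma(X,Y_0)$ at time $0$.
\end{enumerate}

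Finally, since $\Lc(X,Y_t)=\P_t$, we have $(\P_t)_x=\Lc(Y_t\mid X=x)$, and property (iii) of Definition~\ref{Def:weak-sol:FP-a} gives $\hat A(t,Y_t)=\E[a(t,X,Y_t,\Lc(Y_t\mid X))\mid Y_t]$. So the frozen drift coincides with the McKean--Vlasov drift, which gives (i), with $\Lc(X,Y_t)=\P_t\in\Pi(\mu,\nu)$.
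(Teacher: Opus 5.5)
Your proposal is correct and follows essentially the paper's route. The forward direction is measurable selection of $\hat A$ followed by It\^o's formula. For the converse you freeze the drift $\hat A(t,y)-a\bigl(t,x,y,(\P_t)_x\bigr)-\nabla V(y)$, lift the flow to an SDE with the superposition theorem (Theorem~1.3 of \cite{lacker2022superposition}), and then use $\Lc(X,Y_t)=\P_t$ to identify the frozen drift with the McKean--Vlasov one.

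The differences are organizational. The paper proves (i)$\Leftrightarrow$(ii) at the conditional level and calls (ii)$\Leftrightarrow$(iii) immediate. You pass through the joint equation, spell out the disintegration and version-selection details, and offer per-$x$ Euclidean superposition with measurable gluing as a fallback.
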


\begin{Remark}
{\rm 
The additional conclusion that all solutions satisfy the coupling constraint follows from Lemma 3.2 of \cite{conforti2023projected}.
}
\end{Remark}
\begin{proof}
The equivalence between \ref{ass:existCFP-a} and \ref{ass:existFP-a} is
immediate by disintegration. We therefore only prove the equivalence between
\ref{ass:existMV-a} and \ref{ass:existCFP-a}. Fix $T>0$ and work on the interval
$[0,T]$.

\medskip

\noindent
Assume first that \eqreftag{eq:MV-a} admits a weak solution
$(\Omega,\Fc,\F,\P,B,X,Y_0,Y,\hat A)$ in
the sense of Definition~\ref{Def:weak-sol:MV-a}. By Lemma~\ref{Prop:measurable selection},
there exists a Borel map, still denoted by
$
\hat A : [0,T]\times \R^d \longrightarrow \R^d,
$
such that
\vspace{-2mm}
\[
\hat A_t=\hat A(t,Y_t)
\qquad\text{a.s. for a.e.\ }t\in[0,T].
\vspace{-2mm}
\]
We then define
$
\tilde{\P}_{t,x}:=\Lc(Y_t\mid X=x),
$ and $
\P_t(dx,dy):=
\tilde{\P}_{t,x}(dy)\,\mu(dx).
$ 
By the continuity of the trajectories of $Y$, the map $t\mapsto \tilde{\P}_{t,x}$ is weakly continuous for $\mu$-a.e.\ $x\in\Xc$, which proves \ref{weak-sol:CFP-a:cont} in Definition~\ref{Def:weak-sol:CFP-a}.
We further introduce the drift
\vspace{-1.5mm}
\[
\mathbf b(t,x,y)
:=
\hat A(t,y)
-
a\bigl(t,x,y,\tilde{\P}_{t,x}\bigr)
-
\nabla_y V(y).
\vspace{-1mm}
\]
The integrability condition~\ref{weak-sol:MV-a:int} of Definition~\ref{Def:weak-sol:MV-a} implies that the field
$\mathbf b$ satisfies the integrability property \ref{weak-sol:CFP-a:int} of Definition~\ref{Def:weak-sol:CFP-a}.
Moreover, since the SDE~\ref{weak-sol:MV-a:sde} of Definition~\ref{Def:weak-sol:MV-a} holds,
the process $Y$ satisfies
\[
dY_t
=
\mathbf b(t,X,Y_t)\,dt+\sqrt{2}\,dB_t.
\]
Applying It\^o's formula to $h(Y_t)$, for $h\in C_c^\infty(\R^d; \R)$, and then
taking the conditional expectation with respect to $X$, which is independent of
$B$, we obtain
\[
\E^{\tilde{\P}_{t,x}}[h]
=
\E^{\tilde{\P}_{0,x}}[h]
+
\int_0^t
\E^{\tilde{\P}_{s,x}}\!\Big[
\nabla h(Y)\cdot \mathbf b(s,x,Y)+\Delta h(Y)
\Big]\,
ds,
\qquad
\mu\text{-a.s. in }x.
\]
This proves that the conditional Fokker--Planck equation~\ref{weak-sol:CFP-a:fp} of
Definition~\ref{Def:weak-sol:CFP-a} holds.

\noindent Finally, since $\P_t=\Lc(X,Y_t)$, we deduce from
item~\ref{weak-sol:MV-a:int} of Definition~\ref{Def:weak-sol:MV-a} that, for
a.e.\ $t\in[0,T]$,
\[
\hat A(t,Y_t)
=
\E^\P\Big[
a\bigl(t,X,Y_t,(\P_t)_X\bigr)\mid Y_t
\Big]
\qquad
\P\text{-a.s.}
\]
This is precisely the conditional expectation property~\ref{weak-sol:CFP-a:ce} of
Definition~\ref{Def:weak-sol:CFP-a}. Therefore \eqreftag{eq:cond-Fokker-planck-a} admits a weak solution in the sense of Definition \ref{Def:weak-sol:CFP-a}.

\medskip

\noindent
Conversely, assume that \eqreftag{eq:cond-Fokker-planck-a} admits a weak solution $(\tilde{\P},\hat A)$ in the sense of Definition~\ref{Def:weak-sol:CFP-a}. 
Define
\[
\P_t(dx,dy):=
\tilde{\P}_{t,x}(dy)\,\mu(dx),
\,\, 
\text{and} 
\,\,
\mathbf b(t,x,y)
:=
\hat A(t,y)
-
a\bigl(t,x,y,\tilde{\P}_{t,x}\bigr)
-
\nabla_y V(y).
\]
By the integrability condition~\ref{weak-sol:CFP-a:int} of Definition~\ref{Def:weak-sol:CFP-a}, the vector field $\mathbf b$ satisfies the required integrability condition~\ref{weak-sol:MV-a:int} of Definition~\ref{Def:weak-sol:MV-a}.
Moreover, the conditional Fokker--Planck equation~\ref{weak-sol:CFP-a:fp} of
Definition~\ref{Def:weak-sol:CFP-a} yields that, for every $h\in C_c^\infty(\R^d; \R)$ and
every $t\in[0,T]$,
\[
\E^{\tilde{\P}_{t,x}}[h]
=
\E^{\tilde{\P}_{0,x}}[h]
+
\int_0^t
\E^{\tilde{\P}_{s,x}}\!\Big[
\nabla h(Y)\cdot \mathbf b(s,x,Y)+\Delta h(Y)
\Big]\,
ds,
\qquad
\mu\text{-a.s. in }x.
\]
In addition, the continuity condition~\ref{weak-sol:CFP-a:cont} of Definition~\ref{Def:weak-sol:CFP-a} gives the weak continuity of $t\mapsto \tilde{\P}_{t,x}$ for $\mu$-a.e.\ $x$.

\noindent We may therefore apply Theorem 1.3 of \cite{lacker2022superposition} to the
conditional Fokker--Planck equation associated with the drift $\mathbf b$. This
yields a filtered probability space $(\Omega,\Fc,\F,\P)$ supporting a
$d$-dimensional Brownian motion $B$, random variables $(X,Y_0)\sim\pi_0$ with
$B$ independent of $(X,Y_0)$, and an $\F$-adapted process $Y$ such that
\[
dY_t
=
\mathbf b(t,X,Y_t)\,dt+\sqrt{2}\,dB_t,
\]
that is,
\[
dY_t
=
\Big(
\hat A(t,Y_t)
-
a\bigl(t,X,Y_t,\Lc(Y_t\mid X)\bigr)
-
\nabla_y V(Y_t)
\Big)\,dt
+
\sqrt{2}\,dB_t,
\]
and such that
\[
\Lc(Y_t \vert X) = \tilde{\P}_{t,X}, \,\, \text{and}\,\, 
\Lc(X,Y_t)=\P_t
\qquad
\text{for all }t\in[0,T].
\]
In particular, the SDE~\ref{weak-sol:MV-a:sde} of
Definition~\ref{Def:weak-sol:MV-a} is satisfied. 
Since $\Lc(X,Y_t)=\P_t$,
item~\ref{weak-sol:CFP-a:ce} of Definition~\ref{Def:weak-sol:CFP-a} implies
that
\[
\hat A(t,Y_t)
=
\E^\P\Big[
a\bigl(t,X,Y_t,\tilde{\P}_{t,X}\bigr)\mid Y_t
\Big]
\qquad
\P\text{-a.s. for a.e.\ }t,
\]
which is exactly the conditional expectation property~\ref{weak-sol:MV-a:int} of
Definition~\ref{Def:weak-sol:MV-a}. Finally,
item~\ref{weak-sol:CFP-a:int} of Definition~\ref{Def:weak-sol:CFP-a} yields
item~\ref{weak-sol:MV-a:int} of Definition~\ref{Def:weak-sol:MV-a}. Therefore
\ref{ass:existMV-a} holds.
\end{proof}

We now turn to the following estimates, which are based on
Corollary 6.3.2, Corollary 6.4.3, and Theorem 6.5.4 of \cite{bogachev2022fokker}.

\begin{Proposition} \label{prop:estim}
Let $p>d+2$, let $\P_0\in\Pc(\Xc\times\R^d)$, and let
\[
b:\R_+\times\Xc\times\R^d\longrightarrow\R^d
\]
be a Borel-measurable vector field. Assume that there exists a weak solution to
\[
dY_t
=
b(t,X,Y_t)\,dt+\sqrt{2} \,dB_t,
\qquad
(X,Y_0)\sim\P_0,
\qquad
(X, Y_0) \independent B,
\]
where $\independent$ denotes the indepenance. 
For $T>0$ and $R>0$, define
\begin{equation} \label{eqdef:constant control}
C_1(R,T,b)
:=
\int_0^T
\left\{
\E\Big[
|b(t,X,Y_t)|^p\mathbf{1}_{\{|Y_t|\le R\}}
\Big]
+
\int_{\Xc} \int_{B(0,R)} |b(t,x,y)|^p\,dy\,\mu(dx)
\right\}
dt.
\end{equation}
Assume that $C_1(R,T,b)<\infty$ for every $R>0$. Then the following hold.
\begin{enumerate}[label=(\roman*), ref=(\roman*)]
    \item \label{estim density}
    For every $t\in(0,T]$, the law
    \[
    \P_t:=\Lc(X,Y_t)
    \]
admits a density with respect to $\mu \otimes \mathrm{L}$,
    still denoted by $\tilde{\P}_{t,x}(y)$. 
    Moreover, for every $R>0$ and every
    $t\in(0,T]$, there exists a constant $K_{T,R}>0$ such that
    \begin{equation} \label{eq:density-bound-Pt}
    \big\|
    \tilde{\P}_t\,\mathbf{1}_{\{|y|\le R\}}
    \big\|_{\L^p(\mu\otimes \mathrm{L} ) }
    \le
    K_{T,R} \bigl(1+C_1(R,T,b)\bigr).
    \end{equation}

    \item \label{estim cond Exp}
    Assume furthermore that the second marginal of $\P_t$ is given by
    \[
    \Lc(Y_t)=\nu(dy)=e^{-V(y)} \,dy, \quad \forall \, t \in \R_+.
    \]
    For every bounded Borel function $\phi:\Xc\to\R$, define
    \[
    w_\phi(t,y):=\E[\phi(X)\mid Y_t=y].
    \]
    Equivalently, for $\nu$-a.e.\ $y$,
    \begin{equation} \label{eq:def-wphi}
    w_\phi(t,y)
    =
    \int_{\Xc} \phi(x)\,\tilde{\P}_{t,x}(y)\,\mu(dx)\,e^{V(y)}.
    \end{equation}
    Then, for every $R>0$ and every $\delta>0$, there exist constants
    $\alpha,\beta,K>0$, depending only on $C_1(R,T,b)$, $R$, and $\delta$, such
    that for all $y,y'\in B(0,R)$ and all $t,t'\in[\delta,T]$ satisfying
    $|t-t'|\le1$,
    \begin{equation} \label{eq:holder-bound-wphi}
    |w_\phi(t',y')-w_\phi(t,y)|
    \le
    K\|\phi\|_\infty
    \Big(
    |y-y'|^\alpha + |t-t'|^\beta
    \Big).
    \end{equation}
\end{enumerate}
\end{Proposition}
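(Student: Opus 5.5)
The plan is to read both statements as consequences of parabolic regularity for the Fokker--Planck equation in the $y$-variable. The two results of \cite{bogachev2022fokker} would be applied to measures built from $\P_t$ by integrating out (or testing against) the frozen variable $X$. Both parts rest on one observation. For a bounded Borel $\phi\ge0$ on $\Xc$, define the finite nonnegative measure on $\R^d$
\[
\sigma^\phi_t(dy):=\int_\Xc \phi(x)\,\tilde\P_{t,x}(dy)\,\mu(dx)=\E[\phi(X)\mathbf 1_{\{Y_t\in dy\}}].
\]
We apply It\^o's formula to $h(Y_t)$, multiply by $\phi(X)$ and take expectations, using $(X,Y_0)\independent B$. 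This shows that $(\sigma^\phi_t)_t$ solves the linear Fokker--Planck equation $\partial_t\sigma=\Delta\sigma-\nabla\cdot(b^\phi\sigma)$. Here the averaged drift is $b^\phi(t,y):=\E[\phi(X)b(t,X,Y_t)\mid Y_t=y]/\E[\phi(X)\mid Y_t=y]$, chosen via Lemma~\ref{Prop:measurable selection}. By Jensen's inequality, $\int_0^T\int_{B(0,R)}|b^\phi|^p\,d\sigma^\phi_t\,dt\le\|\phi\|_\infty C_1(R,T,b)$. Signed $\phi$ is handled by splitting $\phi=\phi^+-\phi^-$.

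\textbf{Part (i).} I would first take $\phi=\mathbf 1_A$ for Borel $A\subset\Xc$. The local $L^p$ integrability of the drift with $p>d+2$ is the hypothesis of Corollary 6.3.2 and Corollary 6.4.3 of \cite{bogachev2022fokker}. These give, for $t>0$, a density $\varrho^A_t$ of $\sigma^{\mathbf 1_A}_t$ with respect to Lebesgue measure, together with local $L^p$ and H\"older bounds in terms of $1+C_1(R,T,b)$. Absolute continuity for every $A$ gives $\P_t\ll\mu\otimes\mathrm L$, via a Radon--Nikodym argument in the product: $\mu(A)=0$ forces $\P_t(A\times\R^d)=0$, and Lebesgue-null sets in $y$ are null for each $\sigma^{\mathbf 1_A}$.

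The bound \eqref{eq:density-bound-Pt} is where the second term of $C_1$ enters. The conditional density $\tilde\P_{t,x}$ itself solves the conditional equation \eqref{eq:cond-Fokker-planck-a}-type equation with drift $b(t,x,\cdot)$, for $\mu$-a.e.\ $x$. The estimate of Theorem 6.5.4 of \cite{bogachev2022fokker} would be applied $x$ by $x$, controlling $\|\tilde\P_{t,x}\mathbf 1_{B(0,R)}\|_{L^p}^p$ by the integrals of $|b(\cdot,x,\cdot)|^p$ against both $\tilde\P_{s,x}$ and Lebesgue measure on $B(0,R)$. Integrating in $\mu(dx)$ then produces exactly the two terms of $C_1(R,T,b)$. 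This fiberwise step requires the constants of Bogachev's estimate to depend on the drift only polynomially (or at least affinely, after raising to the power $p$) through these integrals. Checking this is the first delicate point; if the dependence is not affine, I would use concavity or H\"older's inequality to pass the $\mu$-integral through.

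\textbf{Part (ii).} Now $\sigma^{\mathbf 1}_t=\nu$ for all $t$. For bounded $\phi$, \eqref{eq:def-wphi} holds by definition of conditional expectation, because $\sigma^\phi_t$ has density $\int\phi(x)\tilde\P_{t,x}(y)\mu(dx)$ and $\nu$ has density $e^{-V}$. By linearity, and replacing $\phi$ by $\phi+\|\phi\|_\infty$, it suffices to treat $0\le\phi\le 2\|\phi\|_\infty$. The numerator density $\varrho^\phi$ solves a Fokker--Planck equation whose drift has weighted $L^p$ norm at most $2\|\phi\|_\infty C_1$ in the sense above. The parabolic H\"older estimate of Corollary 6.4.3 / Theorem 6.5.4 would then give $|\varrho^\phi(t',y')-\varrho^\phi(t,y)|\le K\|\phi\|_\infty(|y-y'|^\alpha+|t-t'|^\beta)$ on $[\delta,T]\times B(0,R)$. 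The homogeneity in $\|\phi\|_\infty$ holds because the equation is linear in $\sigma$ while the drift $b^\phi$ is scale-invariant in $\phi$.

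Multiplying by $e^{V}$, which is $C^2$ and hence bounded and Lipschitz on $B(0,R)$, preserves the H\"older bound and yields \eqref{eq:holder-bound-wphi}. Note that $\varrho^\phi$ itself is bounded on compacts by the same estimate.

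\textbf{Main obstacle.} I expect the hard step to be uniformity. The constants must depend only on $C_1$, $R$, $\delta$ (and $\|\phi\|_\infty$), and not on the particular $\phi$ or $x$. The averaged drift $b^\phi$ is only controlled in $L^p(\sigma^\phi)$, not in $L^p(dy)$, so the versions of Bogachev's local estimates that use $L^p$ integrability with respect to the solution itself must be invoked. That is precisely why $C_1$ contains the term $\E[|b|^p\mathbf 1_{\{|Y_t|\le R\}}]$.
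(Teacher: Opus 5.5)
Part (ii) is sound and follows essentially the paper's route. The paper shifts to $\phi\ge1$ and bounds the averaged drift pointwise by $\|\phi\|_\infty\,\E[|b(t,X,Y_t)|\mid Y_t=y]\in L^p_{\rm loc}(dt\,dy)$. You instead use Jensen in $L^p(\sigma^\phi)$ together with the scale invariance of $b^\phi$; note also that $\varrho^\phi\le\|\phi\|_\infty e^{-V}$ trivially. Both work. The gaps are in Part (i).

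\textbf{Absolute continuity.} The step ``absolute continuity for every $A$ gives $\P_t\ll\mu\otimes\mathrm L$'' is false. Knowing that $\P_t$ vanishes on the rectangles $A\times\R^d$ with $\mu(A)=0$ and $A\times N$ with $\mathrm L(N)=0$ does not give $\P_t\ll\mu\otimes\mathrm L$, because $\mu\otimes\mathrm L$-null sets are not generated by null rectangles.

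For example, take $\Xc=\R^d$, $\mu$ uniform on $[0,1]^d$, and $\P_t$ the law of $(X,X)$. Then every $\sigma^{\mathbf 1_A}=\mathbf 1_{A\cap[0,1]^d}(y)\,dy$ is absolutely continuous, yet $\P_t$ charges only the diagonal. Absolute continuity of the $x$-averaged laws says nothing about the kernel $x\mapsto\tilde\P_{t,x}$.

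You must argue fibre by fibre, as the paper does:
by Fubini, $C_1(R,T,b)<\infty$ for all integers $R$ gives $\int_0^T\int_{B(0,R)}|b(t,x,y)|^p\,\tilde\P_{t,x}(dy)\,dt<\infty$ for $\mu$-a.e.\ $x$;
the density result of \cite{bogachev2022fokker} then applies to $t\mapsto\tilde\P_{t,x}$ for $\mu$-a.e.\ $x$;
Borel measurability of the set of absolutely continuous laws (the paper's footnote) then yields a density with respect to $\mu\otimes\mathrm L$.
You already write down this fibrewise equation in your next paragraph, so the repair is cheap.

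\textbf{The bound \eqref{eq:density-bound-Pt}.} Your fallback fails. Write $m_x$ and $\ell_x$ for the integrals of $|b(\cdot,x,\cdot)|^p$ over $[0,T]\times B(0,R)$ against $\tilde\P_{s,x}(dy)\,ds$ and against $dy\,ds$, respectively. Passing the $\mu$-integral through a fibrewise bound $F(m_x+\ell_x)$ via Jensen requires $F$ concave. For superlinear $F$ the inequality reverses, and H\"older does not rescue it.

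Superlinearity is unavoidable. Take Ornstein--Uhlenbeck fibres $b(t,x,y)=-\lambda(x)y$ with stationary $\tilde\P_{t,x}=N(0,\lambda(x)^{-1}I_d)$. Then $\int_{B(0,R)}\tilde\P_{t,x}(y)^p\,dy\asymp\lambda^{d(p-1)/2}$ as $\lambda\to\infty$, while the two integrands of $C_1$ are $O(\lambda^{p/2})$ and $O(\lambda^{p})$. For $d\ge3$ and $p>d+2$ one has $d(p-1)/2>p$. Choosing the law of $\lambda(X)$ with $\E[\lambda(X)^p]<\infty=\E[\lambda(X)^{d(p-1)/2}]$ therefore gives $C_1(R,T,b)<\infty$ for every $R$, but an infinite left-hand side in \eqref{eq:density-bound-Pt}.

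So no choice of estimate from \cite{bogachev2022fokker} closes this step, and \eqref{eq:density-bound-Pt} fails in the stated generality, at least for $d\ge3$. The paper's proof has the same hole. It integrates in $\mu(dx)$ a fibrewise $L^\infty$ bound affine in $m_x$. That controls $\int\|\tilde\P_{t,x}\mathbf 1_{B(0,R)}\|_\infty\,\mu(dx)$, but not the $L^p(\mu\otimes\mathrm L)$ norm, which would require $\int(1+m_x)^{p-1}\mu(dx)$. What the fibrewise argument genuinely yields is the per-fibre estimate, or an integrated estimate in terms of such a higher moment of $x\mapsto m_x$.
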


\begin{proof}
We begin with point \ref{estim density}.
By disintegration, for each $t\ge0$ there exists a measurable family
$(\tilde{\P}_{t,x})_{x\in\Xc} \subset\Pc(\R^d)$ such that
\[
\P_t(dx,dy)
=
\tilde{\P}_{t,x}(dy)\,\mu(dx).
\]
Moreover, by It\^o's formula applied to $h(Y_t)$, together with the independence
of $X$ and $B$, we obtain that, for $\mu$-a.e.\ $x\in\Xc$ and every
$h\in C_c^\infty(\R^d; \R)$,
\[
\int_{\R^d} h(y)\,\tilde{\P}_{t,x}(dy)
=
\int_{\R^d} h(y)\,\tilde{\P}_{0,x}(dy)
+
\int_0^t
\int_{\R^d}
\bigl(
\nabla h(y)\cdot b(s,x,y)+\Delta h(y)
\bigr)
\tilde{\P}_{s,x}(dy)\,ds.
\]
Thus, for $\mu$-a.e.\ $x$, the curve $t\mapsto\tilde{\P}_{t,x}$ is a
solution of the Fokker--Planck equation with drift $b(\cdot,x,\cdot)$.
Furthermore, by the assumed integrability \eqref{eqdef:constant control}, for every $R>0$,
\[
\int_0^T\int_{B(0,R)}
|b(t,x,y)|^p\,\tilde{\P}_{t,x}(dy)\,dt
<\infty,
\qquad
\text{for $\mu$-a.e.\ }x\in\Xc.
\]
Hence, by Theorem 6.5.4 in \cite{bogachev2022fokker}, for $\mu$-a.e.\ $x$ and
every $t\in(0,T]$, the measure $\tilde{\P}_{t,x}$ admits a density with respect to
Lebesgue measure\footnote{The set of probability measures admitting a density is Borel-measurable in $(\Pc_1(\R^d), \Wc_1)$ since it can be written as $\bigcap_{m\ge1}\ \bigcup_{n\ge1}\ \bigcap_{\substack{U \text{ is open} \\ \text{Diameter}(U)<2^{-n}}}
\{\eta:\eta(U)<2^{-m}\}$, and for every open set $U$, the map $\eta \mapsto \eta(U)$ is l.s.c.}. 
Furthermore, also by Theorem 6.5.4 in \cite{bogachev2022fokker}, for every $R>0$ and every
compact interval $[s_1,s_2]\subset(0,T]$, there exists a constant $K_{R,T}>0$
such that
\[
\|
\tilde{\P}_{\cdot,x}
\|_{\L^\infty([s_1,s_2]\times B(0,R))}
\le
K_{R,T}
\left(
1+
\int_{s_1}^{s_2}
\int_{B(0,R)}
|b(t,x,y)|^p\,\tilde{\P}_{t,x}(y)\,dy\,dt
\right) \,\, \mu\text{-a.s.}
\]
Integrating with respect to $\mu(dx)$, we infer that, for every $t\in(0,T]$,
\[
\big\|
\tilde{\P}_t\,\mathbf{1}_{\{|y|\le R\}}
\big\|_{\L^p(\mu\otimes \mathrm{L})}
\le
K_{T,R}
\left(
1+
\int_0^T
\int_{\Xc}
\int_{B(0,R)}
|b(t,x,y)|^p\,\tilde{\P}_{t,x}(y)\,dy\,\mu(dx)\,dt
\right).
\]
Since
\[
\int_{\Xc}
\int_{B(0,R)}
|b(t,x,y)|^p\,\tilde{\P}_{t,x}(y)\,dy\,\mu(dx)
=
\E\Big[
|b(t,X,Y_t)|^p\mathbf{1}_{\{|Y_t|\le R\}}
\Big],
\]
this yields \eqref{eq:density-bound-Pt}.

\medskip

We now prove point \ref{estim cond Exp}. Let $\phi:\Xc\to\R$ be bounded and Borel-measurable. Since the conclusion of the theorem is invariant under adding a constant to $\phi$, we may assume that $1 \leq \phi$. 
Let $w_\phi$ be given by \eqref{eq:def-wphi}; by the defining property of conditional expectation, it also satisfies $w_\phi \geq 1$. 
For every bounded Borel function $g:\R^d\to\R$,
\[
\E\big[g(Y_t)w_\phi(t,Y_t)\big]
=
\E\big[g(Y_t)\phi(X)\big].
\]
In particular, for every $g\in C_c^\infty(\R^d; \R)$,
\[
\int_{\R^d} g(y)\,w_\phi(t,y)\,e^{-V(y)} \,dy
=
\E\big[g(Y_t)\phi(X)\big].
\]
Applying It\^o's formula to $h(t,Y_t)\phi(X)$, for a smooth compactly supported test function $h$, and using again that $X$ is
independent of $B$, we obtain
\begin{align*}
\frac{d}{dt} \E\big[h(t,Y_t)\phi(X)\big]
&=
\E\Big[
\phi(X) \big( \partial_t + \Delta \big) h(t,Y_t)
+
\phi(X)\nabla h(t,Y_t)\cdot b(t,X,Y_t)
\Big]
\\
&=
\int_{\R^d}
\big( \partial_t + \Delta \big)h(t,y)\,w_\phi(t,y)\,e^{-V(y)} \,dy
+
\int_{\R^d}
\nabla h(t,y)\cdot \gamma_\phi(t,y)\,dy,
\end{align*}
where
\[
\gamma_\phi(t,y)
:=
\int_{\Xc}
\phi(x)\,b(t,x,y)\,\tilde{\P}_t(x,y)\,\mu(dx).
\]
By the triangle inequality and since $w_\phi \geq1 $, 
\[
\vert 
\nabla h(t,y)\cdot \gamma_\phi(t,y)
\vert 
\leq
\|\phi\|_\infty
\vert \nabla h(t,y) \vert \beta(t,y) w_{\phi}(t,y)e^{-V(y)},
\]
where 
\[
\beta(t,y)
:=
\int_{\Xc}
|b(t,x,y)|\,\tilde{\P}_t(x,y)\,\mu(dx)\,e^{V(y)}.
\]
As $V\in C^2$, the functions $e^V$ and $e^{-V}$ are locally bounded. 
Hence, for every $U_{R,T} \Subset [0,T]\times\R^d$, $w_{\phi}$
satisfies 
\begin{align}\label{eq:conditionHolder}
\Big\vert
\int_{U_{R,T}} \big[ \partial_t h + \Delta h \big] w_{\phi} \,dy  dt 
\Big\vert 
\leq 
C_{R,T}
\int_{U_{R,T}}
\vert \nabla h(t,y) \vert \beta(t,y) w_{\phi}(t,y)
dy dt.
\end{align}
Also, for every $R>0$,
\[
\int_0^T\int_{B(0,R)}
|\beta(t,y)|^p\,dy\,dt
<\infty.
\]
Together with \eqref{eq:conditionHolder} it verifies the local
integrability assumptions required by the interior parabolic regularity theory.
We may therefore apply the corresponding local Hölder estimates of Theorem 6.4.2 of \cite{bogachev2022fokker} and conclude that,
for every $R>0$ and every $\delta>0$, there exist constants $\alpha,\beta,K>0$,
depending only on $C_1(R,T,b)$, $R$, and $\delta$, such that for all
$y,y'\in B(0,R)$ and all $t,t'\in[\delta,T]$ with $|t-t'|\le1$,
\[
|w_\phi(t',y')-w_\phi(t,y)|
\le
K\|\phi\|_\infty
\Big(
|y-y'|^\alpha + |t-t'|^\beta
\Big).
\]
This proves \eqref{eq:holder-bound-wphi}.
\end{proof}

We now give the analogue of Lemma 3.5 in \cite{conforti2023projected}.
\begin{Lemma} \label{prop:entropy estimates}
Let
\[
b^1,b^2:\R_+\times\Xc\times\R^d\longrightarrow\R^d
\]
be Borel-measurable vector fields, and assume that $b^2$ is bounded.
For $i=1,2$, let $ \P^i := \Lc(X^i,Y^i)$ be a weak solution of
\[
dY_t^i
=
b^i(t,X^i,Y_t^i)\,dt
+
\sqrt{2} \,dB_t,
\]
satisfying that $(X^i,Y_0^i)$ is independent of the driving Brownian motion.
Let $\pi^1_0 :=\Lc(X^1,Y_0^1)$ and $\pi_0^2 :=\Lc(X^2,Y_0^2)$, and assume that
\[
H(\pi_0^1\mid \pi_0^2)<\infty
\,\,
\text{and}
\,\,
\int_0^T
\E^{\P^1}\!\Big[
|b^1(t,X^1,Y_t^1)-b^2(t,X^1,Y_t^1)|^2
\Big]\,
dt
<\infty.
\]
Then
\[
H(\P^1\mid\P^2)
=
H(\pi_0^1 \vert \pi_0^2 ) +
\frac{1}{4}
\int_0^T
\E^{\P^1} \!\Big[
\big|b^1(t,X^1,Y_t^1)-b^2(t,X^1,Y_t^1)\big|^2
\Big]\,
dt.
\]
\end{Lemma}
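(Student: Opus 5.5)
The identity is the Girsanov entropy formula for path laws. We read $\P^i$ as the law of $(X^i,(Y^i_t)_{t\in[0,T]})$ on $\Xc\times C([0,T];\R^d)$, and we use the chain rule for relative entropy to separate the initial-data part from the path part. Disintegrating both path laws with respect to the initial pair $(X,Y_0)$ gives
\[
H(\P^1\mid\P^2)=H(\pi^1_0\mid\pi^2_0)+\int H\big(\P^1_{(x,y_0)}\mid\P^2_{(x,y_0)}\big)\,\pi^1_0(dx,dy_0).
\]
So it suffices to show that, for $\pi^1_0$-a.e.\ $(x,y_0)$,
\[
H\big(\P^1_{(x,y_0)}\mid\P^2_{(x,y_0)}\big)=\tfrac14\,\E^{\P^1_{(x,y_0)}}\Big[\int_0^T |b^1-b^2|^2(t,x,Y_t)\,dt\Big].
\]
Integrating this against $\pi^1_0$ then yields the claim. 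Independence of $(X^i,Y^i_0)$ from $B$ makes the conditional laws $\P^i_{(x,y_0)}$ the laws of weak solutions of $dY=b^i(t,x,Y)\,dt+\sqrt2\,dB$ started at $y_0$, with $x$ frozen as a parameter.

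\textbf{Upper bound (absolute continuity and the $\le$ direction).} Since $b^2$ is bounded, the SDE with drift $b^2(t,x,\cdot)$ is well-posed in law; one can see this by Girsanov from $\sqrt2 B$ together with Novikov. In particular $\P^2_{(x,y_0)}$ is characterized as that unique law. Under $\P^1_{(x,y_0)}$, set $\Delta_t:=b^1-b^2$ evaluated at $(t,x,Y_t)$. We have $\int_0^T|\Delta_t|^2dt<\infty$ a.s., by the integrability hypothesis and Fubini for a.e.\ starting point. Introduce the stopping times $\tau_n:=\inf\{t:\int_0^t|\Delta_s|^2ds\ge n\}$. On $[0,\tau_n]$, Novikov applies to $-\tfrac1{\sqrt2}\int\Delta\,dB$, so the Girsanov change of measure turns $Y$ into a solution with drift $b^2$. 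By uniqueness in law, this identifies the stopped laws, and the log-density of $\P^1$ with respect to $\P^2$ on $\mathcal F_{\tau_n\wedge T}$ equals
\[
\tfrac1{\sqrt2}\int_0^{\tau_n\wedge T}\Delta\cdot dB+\tfrac14\int_0^{\tau_n\wedge T}|\Delta|^2dt
\]
under $\P^1$. Taking $\P^1$-expectations kills the martingale term. This gives the exact identity for the stopped entropies, namely $\tfrac14\E^{\P^1}\int_0^{\tau_n\wedge T}|\Delta|^2$.

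\textbf{Passage to the limit (the main obstacle).} We need $H(\P^1|_{\mathcal F_{\tau_n\wedge T}}\mid\P^2|_{\mathcal F_{\tau_n\wedge T}})\to H(\P^1\mid\P^2)$ on $\mathcal F_T$. Since $\tau_n\uparrow\infty$ $\P^1$-a.s., the $\sigma$-algebras $\mathcal F_{\tau_n\wedge T}$ increase to a $\P^1$-complete version of $\mathcal F_T$. Relative entropy is monotone and continuous along increasing filtrations (a martingale convergence argument on densities, or lower semicontinuity together with the data-processing inequality). Monotone convergence on the right-hand side then gives the identity. The delicate point is absolute continuity on the full $\mathcal F_T$ rather than only on the stopped $\sigma$-algebras. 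We obtain it from the same limit: the stopped entropies are uniformly bounded by $\tfrac14\E^{\P^1}\int_0^T|\Delta|^2<\infty$, so the densities are uniformly integrable, their limit is a genuine density, and $\P^1\ll\P^2$ on $\mathcal F_T$.

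Finally, integrate over $(x,y_0)\sim\pi^1_0$. The conditional version of the integrability hypothesis holds a.e. The measurability of $(x,y_0)\mapsto\P^i_{(x,y_0)}$ is standard by disintegration. This completes the formula.
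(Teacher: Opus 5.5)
Your proposal is correct and follows the paper's argument: the chain rule for relative entropy conditioned on $(X,Y_0)$, then the Girsanov entropy identity for the frozen-$x$ equations at $\pi^1_0$-a.e.\ initial point, then integration against $\pi^1_0$. The only difference is that the paper cites Lemma~3.5 of \cite{conforti2023projected} for the fixed-initial-condition identity, whereas you prove it directly by localizing with $\tau_n$ and passing to the limit; this works because the event $\{\int_0^T|\Delta_t|^2\,dt<\infty\}$ belongs to $\bigvee_n\mathcal F_{\tau_n\wedge T}$ and has full $\P^1$-mass, so the entropies on that $\sigma$-algebra and on $\mathcal F_T$ coincide.
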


\begin{proof}
This is a consequence of the chain rule for relative entropy and Girsanov's theorem.
First, by the chain rule for relative entropy, conditioning on the initial value gives
\[
H(\P^1\mid\P^2)
=
H(\pi_0^1 \mid \pi_0^2 ) +
\int_{\Xc \times \R^d}
H(\P^1_{x,y} \mid \P^2_{x,y})
\,\pi_0^1 ( dx, dy ),
\]
where, by the independence of $B$ and $(X,Y_0)$, $\P^i_{x,y}$ denotes the law of a weak solution to
\[
dY_t^{i,y}
=
b^i(t,x,Y_t^{i,y})\,dt
+
\sqrt{2}\,dB_t,
\qquad
Y_0^{i,y}=y.
\]
Now, since
\[
\int_0^T
\E^{\P^1_{x,y}} \!\Big[
\big|b^1(t,x,Y_t^{1,y})-b^2(t,x,Y_t^{1,y})\big|^2
\Big]\,
dt
< \infty,
\]
Lemma~3.5 of \cite{conforti2023projected} yields
\[
H(\P^1_{x,y} \mid \P^2_{x,y})
=
\frac{1}{4}
\int_0^T
\E^{\P^1_{x,y}} \!\Big[
\big|b^1(t,x,Y_t^{1,y})-b^2(t,x,Y_t^{1,y})\big|^2
\Big]\,
dt.
\]
Finally, since
\begin{align*}
&\int_{\Xc \times \R^d}
\int_0^T
\E^{\P^1_{x,y}} \!\Big[
\big|b^1-b^2\big|^2(t,x,Y_t^{1,y})
\Big]\,
dt\,
\pi_0^1 ( dx, dy )
=
\int_0^T
\E^{\P^1} \!\Big[
\big|b^1- b^2\big|^2(t,X^1,Y_t^1)
\Big]\,
dt,
\end{align*}
we obtain the desired result.
\end{proof}

\noindent We finally state several convergence results that will be used repeatedly in the sequel.
The first one is a consequence of a standard gluing argument; see, for instance, Lemma 5.5 in
\cite{santambrogio20151}.

\begin{Lemma} \label{prop:gluing lemma}
Let $\Xc$ and $\Yc$ be complete metric spaces, and let
$(\gamma_n)_n\subset\Pc_p(\Xc)$ and $(\lambda_n)_n\subset\Pc_p(\Yc)$.
Assume that there exist $\gamma\in\Pc_p(\Xc)$ and $\lambda\in\Pc_p(\Yc)$ such that
\[
\Wc_p(\gamma_n,\gamma)\longrightarrow0,
\qquad
\Wc_p(\lambda_n,\lambda)\longrightarrow0.
\]
Then, for every $\pi\in\Pi(\gamma,\lambda)$, there exists a sequence
$(\pi_n)_n$ with $\pi_n\in\Pi(\gamma_n,\lambda_n)$ such that
\vspace{-2mm}
\[
\Wc_p(\pi_n,\pi)\longrightarrow0.
\vspace{-2mm}
\]
\end{Lemma}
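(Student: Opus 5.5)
The plan is the standard gluing argument. Since $\Wc_p(\gamma_n,\gamma)\to0$, I choose for each $n$ an optimal coupling $\alpha_n\in\Pi(\gamma_n,\gamma)$ with $\int d_{\Xc}(x',x)^p\,\alpha_n(dx',dx)=\Wc_p(\gamma_n,\gamma)^p$. Likewise I choose $\beta_n\in\Pi(\lambda,\lambda_n)$ optimal for $\Wc_p(\lambda,\lambda_n)$. Existence of optimal couplings on complete separable metric spaces is standard. If separability of $\Xc$ is an issue, I would use $\delta$-optimal couplings with $\delta=1/n$ instead, which changes nothing below.

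Next I glue the three plans along their shared marginals. Disintegrate $\alpha_n$ with respect to its second marginal $\gamma$, giving kernels $x\mapsto\alpha_n^{x}(dx')$, and $\beta_n$ with respect to its first marginal $\lambda$, giving kernels $y\mapsto(\beta_n)_y(dy')$. Define
\[
\Theta_n(dx',dx,dy,dy') := \alpha_n^{x}(dx')\,\pi(dx,dy)\,(\beta_n)_y(dy').
\]
This is a probability measure on $\Xc\times\Xc\times\Yc\times\Yc$. Its $(x',x)$-marginal is $\alpha_n$, its $(x,y)$-marginal is $\pi$, and its $(y,y')$-marginal is $\beta_n$. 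I then set $\pi_n:=\mathrm{proj}_{(x',y')\#}\Theta_n$. Its marginals are $\gamma_n$ and $\lambda_n$, so $\pi_n\in\Pi(\gamma_n,\lambda_n)$. It also has finite $p$-th moment, since both marginals do.

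Finally, the projection of $\Theta_n$ onto $((x',y'),(x,y))$ is a coupling of $\pi_n$ and $\pi$. I equip $\Xc\times\Yc$ with the metric $\big(d_{\Xc}^p+d_{\Yc}^p\big)^{1/p}$, or with any equivalent product metric, at the cost of a constant. This gives
\[
\Wc_p(\pi_n,\pi)^p\le\int\big(d_{\Xc}(x',x)^p+d_{\Yc}(y,y')^p\big)\,d\Theta_n=\Wc_p(\gamma_n,\gamma)^p+\Wc_p(\lambda,\lambda_n)^p\longrightarrow0.
\]

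The only mildly delicate step is the measurability of the disintegrations used in the gluing. This is guaranteed by the existence of regular conditional probabilities on Polish spaces. For merely complete metric spaces, I would note that measures in $\Pc_p$ are concentrated on separable closed subsets, which reduces to the Polish case. Everything else is routine.
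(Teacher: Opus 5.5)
Your proposal is correct and is exactly the standard gluing argument that the paper invokes; the paper gives no proof of its own and only cites the gluing lemma (Lemma 5.5 in Santambrogio). You carry that lemma out explicitly via the glued measure $\Theta_n$, obtaining the bound $\Wc_p(\pi_n,\pi)^p\le \Wc_p(\gamma_n,\gamma)^p+\Wc_p(\lambda,\lambda_n)^p$.

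One small correction to your closing remark: finite $p$-th moments do not by themselves force a measure to be concentrated on a separable set. What your reduction actually uses is that the measures are Radon (tight), which holds automatically on Polish spaces and is implicitly assumed throughout the paper.
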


\begin{Lemma} \label{prop:bootstrap}
Let $\Zc$ be a complete metric space, and let $(\pi_n)_n\subset\Pc(\Zc)$.
Assume that there exists a Borel measure $\lambda$ on $\Zc$ such that, for every
$n\ge1$, the measure $\pi_n$ is absolutely continuous with respect to $\lambda$,
with density
\[
f_n:=\frac{d \pi_n}{d \lambda}.
\]
Assume moreover that there exists $p>1$ such that, for every compact set
$K\subset\Zc$, there exists a constant $C_K>0$ satisfying
\vspace{-2mm}
\[
\|f_n\mathbf{1}_K\|_{\L^p(\lambda)} \le C_K,
\qquad
\text{for all }n\ge1.
\vspace{-2mm}
\]
If $\pi_n\to\pi$ weakly, then, for every bounded Borel-measurable function $g:\Zc\to\R$, 
\vspace{-2mm}
\[
\int_{\Zc} g(z)\,\pi_n(dz)
\longrightarrow
\int_{\Zc} g(z)\,\pi(dz).
\vspace{-2mm}
\]

\end{Lemma}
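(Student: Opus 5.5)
The plan is to reduce to the case of continuous test functions by a Lusin-type approximation of $g$. The uniform local $\L^p$ bound on the densities, through Hölder's inequality, makes the sets where $g$ and its continuous approximant differ negligible for all the $\pi_n$ at once, and also for the limit $\pi$. I assume, as is implicit in the statement, that $\Zc$ is separable, so that Prokhorov's theorem applies, and that $\lambda$ is finite on compact sets (locally finite), so that Lusin's theorem is available on compacts. Write $q$ for the conjugate exponent of $p$.

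First, tightness. Since $\pi_n\to\pi$ weakly, the family $\{\pi_n\}\cup\{\pi\}$ is tight. For $\eta>0$, fix a compact $K$ with $\sup_n\pi_n(K^c)\le\eta$. As $K^c$ is open, the Portmanteau theorem gives $\pi(K^c)\le\liminf_n\pi_n(K^c)\le\eta$.

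Second, a uniform Hölder control on $K$. For every Borel $A\subset K$,
\[
\pi_n(A)=\int f_n\mathbf 1_K\mathbf 1_A\,d\lambda\le C_K\,\lambda(A)^{1/q}.
\]
The same bound must be transferred to $\pi$, and this is where I expect the main difficulty. I would enlarge $K$ to a compact $K'$ containing a closed neighbourhood of $K$, or simply take a second compact with the same uniform tail bound.

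For $A\subset K$ Borel and $\delta>0$, outer regularity of the finite measure $\lambda|_{K'}$ gives an open set $U\supset A$ with $\lambda(U\cap K')\le\lambda(A)+\delta$. Then
\[
\pi(A)\le\pi(U)\le\liminf_n\pi_n(U)\le\liminf_n\pi_n(U\cap K')+\eta\le C_{K'}(\lambda(A)+\delta)^{1/q}+\eta .
\]
Since $\delta$ and $\eta$ are arbitrary (with $K'$ fixed), we get $\pi(A)\le C_{K'}\lambda(A)^{1/q}$ for $A\subset K$.

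An alternative route is to extract a subsequence along which $f_n\mathbf 1_{K'}$ converges weakly in $\L^p(\lambda)$ to some $f$. Testing against $\varphi\in C_b$ then identifies $\pi$ restricted to $\mathrm{int}\,K'$ with $f\lambda$, which gives the same bound.

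Third, approximation. Given $\delta>0$, Lusin's theorem applied to the finite measure $\lambda|_K$ yields $h\in C_b(\Zc;\R)$ with $\|h\|_\infty\le\|g\|_\infty$ and $\lambda(K\cap\{g\ne h\})\le\delta$; use Tietze extension from a closed subset of $K$. Splitting each integral over $K$ and $K^c$ then gives
\[
\Big|\int g\,d\pi_n-\int g\,d\pi\Big|\le\Big|\int h\,d\pi_n-\int h\,d\pi\Big|+2\|g\|_\infty\big(2\eta+(C_K+C_{K'})\delta^{1/q}\big).
\]
The first term tends to $0$ by weak convergence. Letting $n\to\infty$, then $\delta\to0$, then $\eta\to0$ concludes the proof.
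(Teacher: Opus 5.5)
Your proof is correct, and it takes a genuinely different route from the paper's.

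\textbf{How the two arguments differ.} The paper works at the level of densities. It uses reflexivity of $\L^p(K,\lambda)$ to extract weak cluster points of $f_n\mathbf 1_K$ and identifies them with the density of $\pi$ by testing against continuous functions supported in $K$. It then upgrades this to $f_n\mathbf 1_K\rightharpoonup f\mathbf 1_K$ along the whole sequence. Finally it pairs with $g\mathbf 1_{K_\varepsilon}\in\L^{p'}(\lambda)$ and handles the tails by tightness.

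You work at the level of sets. H\"older gives the equi-absolute-continuity bound $\pi_n(A)\le C_K\lambda(A)^{1/q}$ for $A\subset K$. Portmanteau on open sets plus outer regularity transfers it to $\pi$, and Lusin plus Tietze replaces $g$ by a continuous function.

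\textbf{What each route buys.} The paper's route delivers more: local absolute continuity of $\pi$ with $f\mathbf 1_K\in\L^p(\lambda)$, and weak $\L^p$ convergence of the densities. This extra information is exactly what Proposition~\ref{prop:bootstrap cond exp} reuses. Your ``alternative route'' in the second step is essentially the paper's argument, stated with the correct restriction to $\mathrm{int}\,K'$.

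Your main route needs no subsequence extraction, no identification of limits, and no functional-analytic compactness. Only equi-absolute continuity of $(\pi_n)$ on compacts is used. In its tail-bound version it also never needs compact sets to have interior. By contrast, continuous functions supported in $K$ only see $\pi$ on $\mathrm{int}\,K$. That interior is empty for every compact subset of $\Xc\times\R^d$ when $\Xc$ is, say, an infinite-dimensional Banach space, a case the paper allows since $\Xc$ is only assumed complete. Both arguments tacitly use separability (for tightness of $(\pi_n)$) and $\lambda(K)<\infty$ on compacts, as you correctly flagged.

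\textbf{A small correction in your second step.} In the tail-bound version, $K'$ is chosen depending on $\eta$, so you cannot let $\eta\to0$ ``with $K'$ fixed''. What you actually obtain is $\pi(A)\le C_{K'}\lambda(A)^{1/q}+\eta$ for Borel $A\subset K$, and you may as well take $K'=K$. This is enough: it adds one more $\eta$ to your final estimate, and the order of limits $n\to\infty$, $\delta\to0$, $\eta\to0$ still closes the argument. In the locally compact case, where $K'$ contains an open neighbourhood $O$ of $K$, replacing $U$ by $U\cap O$ removes the tail term and gives your clean bound.
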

\begin{proof}
Let $K\subset\Zc$ be compact. By assumption, $(f_n\mathbf{1}_K)_n$ is bounded in
$\L^p(K,\lambda)$. Since $1<p<\infty$, the space $\L^p(K,\lambda)$ is reflexive,
hence $(f_n\mathbf{1}_K)_n$ is relatively weakly compact.

\noindent Let $h_K\in\L^p(K,\lambda)$ be a weak cluster point of $(f_n\mathbf{1}_K)_n$.
Then, up to extraction,
\[
f_n\mathbf{1}_K \rightharpoonup h_K
\qquad
\text{weakly in }\L^p(K,\lambda).
\]
If $g:\Zc\to\R$ is continuous and supported in $K$, then $g\in \L^{p'}(K,\lambda)$
since $\lambda$ is Borel, and therefore
\[
\int_{\Zc} g(z)\,\pi_n(dz)
=
\int_K g(z)f_n(z)\,\lambda(dz)
\longrightarrow
\int_K g(z)h_K(z)\,\lambda(dz).
\]
On the other hand, since $\pi_n\to\pi$ weakly,
\vspace{-2mm}
\[
\int_{\Zc} g(z)\,\pi_n(dz)
\longrightarrow
\int_{\Zc} g(z)\,\pi(dz).
\vspace{-2mm}
\]
Hence
$
\int_{\Zc} g(z)\,\pi(dz)
=
\int_K g(z)h_K(z)\,\lambda(dz)
$
for every continuous function $g$ supported in $K$. Thus the restriction of $\pi$
to $K$ coincides with $h_K\lambda$. In particular, $\pi_{|K}\ll\lambda$, and the
density $h_K$ is uniquely determined $\lambda$-a.e. Therefore $(f_n\mathbf{1}_K)_n$
admits a unique weak cluster point in $\L^p(K,\lambda)$, which implies that
\vspace{-3mm}
\[
f_n\mathbf{1}_K \rightharpoonup f\mathbf{1}_K
\text{ weakly in }\L^p(K,\lambda), 
\text{ where $f:=\frac{d\pi}{d\lambda}$.}
\vspace{-2mm}
\]

Let now $g:\Zc\to\R$ be bounded and Borel-measurable. Fix $\varepsilon>0$. Since
$\pi_n\to\pi$ weakly, the family $(\pi_n)_n\cup\{\pi\}$ is tight. Hence there
exists a compact set $K_\varepsilon\subset\Zc$ such that 
\[
\sup_n \pi_n(\Zc\setminus K_\varepsilon)+\pi(\Zc\setminus K_\varepsilon)\le\varepsilon.
\]
Then
$$
\left|
\int_{\Zc} g\,d\pi_n-\int_{\Zc} g\,d\pi
\right|
\le
\left|
\int_{K_\varepsilon} g(z)\bigl(f_n(z)-f(z)\bigr)\,\lambda(dz)
\right|
+
2\|g\|_\infty\varepsilon.
$$
Since $\lambda$ is Radon, $\lambda(K_\varepsilon)<\infty$, hence
$g\mathbf{1}_{K_\varepsilon}\in \L^{p'}(\lambda)$. By the weak convergence of
$f_n\mathbf{1}_{K_\varepsilon}$ to $f\mathbf{1}_{K_\varepsilon}$ in
$\L^p(K_\varepsilon,\lambda)$, we get
\vspace{-2mm}
\[
\int_{K_\varepsilon} g(z)f_n(z)\,\lambda(dz)
\longrightarrow
\int_{K_\varepsilon} g(z)f(z)\,\lambda(dz).
\vspace{-2mm}
\]
Therefore
\vspace{-2mm}
\[
\limsup_{n\to\infty}
\left|
\int_{\Zc} g\,d\pi_n-\int_{\Zc} g\,d\pi
\right|
\le
2\|g\|_\infty\varepsilon.
\]
Letting $\varepsilon\to0$ concludes the proof.
\end{proof}

\begin{Proposition} \label{prop:bootstrap cond exp}
Let $\Xc$ and $\Yc$ be complete metric spaces, let $\gamma\in\Pc(\Xc)$ and
$\lambda\in\Pc(\Yc)$, and let $(\pi_n)_n\subset\Pi(\gamma,\lambda)$.
Assume that the following conditions hold:
\begin{enumerate}[label=(\roman*)]
    \item For every $n\ge1$, the measure $\pi_n$ is absolutely continuous with
    respect to $\gamma\otimes\lambda$, with density
    \(
    f_n:=\frac{d\pi_n}{d(\gamma\otimes\lambda)}.
    \)

    \item There exists $\alpha_1>1$ such that, for every compact set
    $K\subset\Xc\times\Yc$, there exists a constant $C_K>0$ satisfying
    \(
    \|f_n\mathbf{1}_K\|_{\L^{\alpha_1}(\gamma\otimes\lambda)} \le C_K,
    \
    \text{for all }n\ge1.
    \)

    \item We have $\pi_n\to\pi$ weakly for some $\pi\in\Pi(\gamma,\lambda)$.

    \item There exist $\alpha_2>1$ such that
    \[
    \E^{\pi_n}[g(X)\mid Y]
    \longrightarrow
    \E^\pi[g(X)\mid Y]
    \qquad
    \text{in } \ \L^{\alpha_2}(\lambda),
    \qquad
    \text{for every }g\in\L^\infty(\gamma).
    \]

    \item The function $w:\Xc\times\Yc\to\R$ is Borel-measurable and satisfies, with some $\alpha_3 >\alpha_2$
    \begin{equation}\label{eqdef:estimates w}
    \sup_n \E^{\pi_n}[|w(X,Y)|^{\alpha_3}]
    + \E^\pi[|w(X,Y)|^{\alpha_3}] < \infty.
    \end{equation}

    \item The exponents satisfy
    \(
    \frac{\alpha_2}{\alpha_3}+\frac{1}{\alpha_1}<1 \), equivalently, \(
    \alpha_2\,\alpha_1' < \alpha_3.\)
\end{enumerate}
Then it holds that 
\[
\E^{\pi_n}[w(X,Y)\mid Y]
\longrightarrow
\E^\pi[w(X,Y)\mid Y]
\qquad
\text{in } \ \ \L^{\alpha_2}(\lambda).
\]
\end{Proposition}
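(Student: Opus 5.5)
The plan is to reduce to product functions $g(x)h(y)$, for which the claim follows from hypothesis (iv), and to control the approximation error uniformly in $n$. We use the moment bound \eqref{eqdef:estimates w} for large values of $w$, and the local $\L^{\alpha_1}$ density bound (ii) on compacts.

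\emph{Step 1 (products).} Let $g\in\L^\infty(\gamma)$ and $h:\Yc\to\R$ be bounded Borel. Then
\[
\E^{\pi_n}[g(X)h(Y)\mid Y]=h(Y)\,\E^{\pi_n}[g(X)\mid Y].
\]
Since every $\pi_n$ and $\pi$ has second marginal $\lambda$, these are all functions in $\L^{\alpha_2}(\lambda)$. Hypothesis (iv), together with boundedness of $h$, gives convergence in $\L^{\alpha_2}(\lambda)$. By linearity the same holds for every finite sum $u=\sum_k g_k\otimes h_k$.

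\emph{Step 2 (error bound).} By conditional Jensen, for any $\rho\in\Pi(\gamma,\lambda)$ and Borel $v$,
\[
\|\E^{\rho}[v\mid Y]\|_{\L^{\alpha_2}(\lambda)}^{\alpha_2}\le \E^{\rho}[|v|^{\alpha_2}].
\]
It therefore suffices to show that for every $\eta>0$ there is a finite sum $u$ of products with $\sup_n\E^{\pi_n}|w-u|^{\alpha_2}+\E^{\pi}|w-u|^{\alpha_2}\le\eta$. Then a $3\eta$-argument with Step 1 concludes. We proceed in three sub-steps.

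\begin{enumerate}
\item[(a)] \emph{Truncation.} Put $w_M:=w\mathbf 1_{\{|w|\le M\}}$. Since $\alpha_3>\alpha_2$, Markov's inequality gives
\[
\E^{\pi_n}|w-w_M|^{\alpha_2}\le M^{\alpha_2-\alpha_3}\sup_n\E^{\pi_n}|w|^{\alpha_3},
\]
and similarly under $\pi$. This is small uniformly in $n$ for $M$ large.

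\item[(b)] \emph{Outside a compact.} By (iii) the family $(\pi_n)\cup\{\pi\}$ is tight, so there is a compact $K$ with $\sup_n\pi_n(K^c)+\pi(K^c)\le\varepsilon$. If $u$ is also bounded by $M$ (truncate each approximant, still a finite product sum after a harmless modification, or simply clip $u$), the contribution of $K^c$ is at most $(2M)^{\alpha_2}\varepsilon$.

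\item[(c)] \emph{On $K$.} By Hölder and (ii),
\[
\E^{\pi_n}\big[|w_M-u|^{\alpha_2}\mathbf 1_K\big]\le C_K\,\big\||w_M-u|^{\alpha_2}\mathbf 1_K\big\|_{\L^{\alpha_1'}(\gamma\otimes\lambda)}.
\]
Now $w_M\mathbf 1_K$ is bounded on a set of finite $\gamma\otimes\lambda$-measure. Finite sums of indicator products $\mathbf 1_{A}(x)\mathbf 1_B(y)$ are dense in $\L^{q}(\gamma\otimes\lambda)$ for every finite $q$, in particular $q=\alpha_2\alpha_1'$, since product rectangles generate the product $\sigma$-algebra. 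So $u$ can be chosen with this term $\le\eta$.

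\item[(d)] \emph{The limit $\pi$.} For $\pi$ we need the same bound on $K$. By the proof of Lemma~\ref{prop:bootstrap}, $\pi|_K$ has density $f\mathbf 1_K$, which is a weak $\L^{\alpha_1}$ limit of $f_n\mathbf 1_K$, so $\|f\mathbf 1_K\|_{\L^{\alpha_1}}\le C_K$ by weak lower semicontinuity of the norm. The same Hölder estimate then applies.
\end{enumerate}

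The main obstacle I expect is Step 2(c): making the approximation uniform in $n$. This requires transferring an approximation in the fixed measure $\gamma\otimes\lambda$ to all $\pi_n$ simultaneously, and that is exactly where the local density bound (ii) and the exponent bookkeeping enter.

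If one prefers to avoid the truncation in (a), one can instead apply Hölder with exponents $\alpha_3/\alpha_2$ and $\alpha_1$ directly to $|w-u|^{\alpha_2}$ on $\{|w|>M\}\cap K$. The condition (vi), $\frac{\alpha_2}{\alpha_3}+\frac1{\alpha_1}<1$, leaves a positive leftover exponent on $\gamma\otimes\lambda(\{|w|>M\}\cap K)$. That version is likely the authors' route. Either way the key point is that the bound is uniform in $n$ and the error vanishes as $M\to\infty$ and $\varepsilon\to0$.
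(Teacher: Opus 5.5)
Your proof is correct. It follows the paper's outline: products via (iv), tails via uniform tightness and the $\alpha_3$-moments, the bulk via H\"older against the local $\L^{\alpha_1}$ bound on $f_n$, and the limit $\pi$ via the density from the proof of Lemma~\ref{prop:bootstrap}. The difference is in how $w$ is approximated.

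The paper fixes a product compact $K_{\Xc}\times K_{\Yc}$ and approximates $w$ itself by $\varphi_N=\sum_i h_i\otimes\ell_i$ in $\L^{\alpha_3}(\gamma\otimes\lambda)$. It then uses (vi) exactly to dominate $\|w-\varphi_N\|_{\L^{\alpha_2\alpha_1'}(\gamma\otimes\lambda)}$ by $\|w-\varphi_N\|_{\L^{\alpha_3}(\gamma\otimes\lambda)}$. The compact must be a product so that $\varphi_N\mathbf 1_{K_{\Xc}\times K_{\Yc}}$ is again a sum of products to which (iv) applies.

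You instead truncate first, keep the approximant $u$ global (so any compact from tightness will do), and clip $u$ to handle $K^c$. This gains two things:
\begin{enumerate}
\item Condition (vi) is never used; only $\alpha_3>\alpha_2$ enters, in step (a).
\item You never need $w\in\L^{\alpha_3}(\gamma\otimes\lambda)$. The paper's density step tacitly requires this, yet (v) only controls moments of $w$ under $\pi_n$ and $\pi$, and the density bounds (ii) are upper bounds only.
\end{enumerate}

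The clipping is indeed harmless. Rewrite $u$ on a disjoint family of rectangles, so clipping acts only on the coefficients and $u$ stays a finite sum of products. Then $|w_M-\mathrm{clip}(u)|\le|w_M-u|$, because $|w_M|\le M$.

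Your closing guess about the authors' route is not what they do. The variant you sketch there (H\"older in $\gamma\otimes\lambda$ with exponent $\alpha_3/\alpha_2$) would reintroduce the same unsupported integrability of $w$ under $\gamma\otimes\lambda$. Your truncated version is the better one.
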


\begin{proof}
Let $\varepsilon>0$. Since $\gamma$ and $\lambda$ are tight, there exist two compact
sets $K_{\Xc}\subset\Xc$ and $K_{\Yc}\subset\Yc$ such that for every $\tilde{\pi}\in\Pi(\gamma,\lambda)$,
\[
\tilde{\pi}\big((\Xc\times\Yc)\setminus (K_{\Xc}\times K_{\Yc})\big)
\le \varepsilon.
\]
Since ${\rm Span}\big(h(x)\ell(y),\ h\in\L^\infty(\gamma),\ \ell\in\L^\infty(\lambda)\big)$
is dense in $\L^{\alpha_3}(\gamma\otimes\lambda)$, there exist $N\ge1$,
$h_1,\dots,h_N\in\L^\infty(\gamma)$ and $\ell_1,\dots,\ell_N\in\L^\infty(\lambda)$
such that, letting
\[
\varphi_N(x,y):=\sum_{i=1}^N h_i(x)\ell_i(y),
\, \,
\text{one has}
\,\,
\|w-\varphi_N\|_{\L^{\alpha_3}(\gamma\otimes\lambda)}\le \varepsilon.
\]
By the triangle inequality and the contraction property of conditional expectation,
we get
\[
\big\|
\E^{\pi_n}[w(X,Y)\mid Y]
-
\E^{\pi}[w(X,Y)\mid Y]
\big\|_{\L^{\alpha_2}(\lambda)}
\le
I_1+I_2+I_3,
\]
where
\begin{align*}
I_1
&=
\big\|
w\,\mathbf{1}_{(\Xc\times\Yc)\setminus (K_{\Xc}\times K_{\Yc})}
\big\|_{\L^{\alpha_2}(\pi_n)}
+
\big\|
w\,\mathbf{1}_{(\Xc\times\Yc)\setminus (K_{\Xc}\times K_{\Yc})}
\big\|_{\L^{\alpha_2}(\pi)},
\\
I_2
&=
\big\|
(w-\varphi_N)\mathbf{1}_{K_{\Xc}\times K_{\Yc}}
\big\|_{\L^{\alpha_2}(\pi_n)}
+
\big\|
(w-\varphi_N)\mathbf{1}_{K_{\Xc}\times K_{\Yc}}
\big\|_{\L^{\alpha_2}(\pi)},
\\
I_3
&=
\Big\|
\mathbf{1}_{K_{\Yc}}
\Big(
\E^{\pi_n}[\varphi_N(X,Y)\mathbf{1}_{K_{\Xc}}(X)\mid Y]
-
\E^{\pi}[\varphi_N(X,Y)\mathbf{1}_{K_{\Xc}}(X)\mid Y]
\Big)
\Big\|_{\L^{\alpha_2}(\lambda)}.
\end{align*}
We first estimate $I_1$. Since $\alpha_3>\alpha_2$, Hölder's inequality yields
\[
\|w\mathbf{1}_A\|_{\L^{\alpha_2}(\tilde{\pi})}
\le
\|w\|_{\L^{\alpha_3}(\tilde{\pi})}
\tilde{\pi}(A)^{1/\alpha_2-1/\alpha_3},
\qquad
A\in\Bc(\Xc\times\Yc),
\]
for every $\tilde{\pi}\in\Pi(\gamma,\lambda)$. Hence, using \eqref{eqdef:estimates w},
there exists a constant $C>0$, independent of $n$ and $\varepsilon$, such that
\[
I_1
\le
C\,\varepsilon^{\,1/\alpha_2-1/\alpha_3}.
\]
We next estimate $I_2$. Following the proof of Proposition~\ref{prop:bootstrap}, the limit measure
$\pi$ is also absolutely continuous with respect to $\gamma\otimes\lambda$, with
density still denoted by $f$, and moreover $f\mathbf{1}_K\in
\L^{\alpha_1}(\gamma\otimes\lambda)$ for every compact set
$K\subset\Xc\times\Yc$. Therefore, using Hölder's inequality with exponents
$\alpha_1$ and $\alpha_1'$, we obtain
\begin{align*}
\big\|
(w-\varphi_N)\mathbf{1}_{K_{\Xc}\times K_{\Yc}}
\big\|_{\L^{\alpha_2}(\pi_n)}^{\alpha_2}
&=
\int_{K_{\Xc}\times K_{\Yc}}
|w-\varphi_N|^{\alpha_2} f_n\,d(\gamma\otimes\lambda)
\\
&\le
\|f_n\mathbf{1}_{K_{\Xc}\times K_{\Yc}}\|_{\L^{\alpha_1}(\gamma\otimes\lambda)}
\,
\big\|
|w-\varphi_N|^{\alpha_2}\mathbf{1}_{K_{\Xc}\times K_{\Yc}}
\big\|_{\L^{\alpha_1'}(\gamma\otimes\lambda)}
\\
&\le
C_{K_{\Xc}\times K_{\Yc}}
\,
\|w-\varphi_N\|_{\L^{\alpha_2\alpha_1'}(\gamma\otimes\lambda)}^{\alpha_2}.
\end{align*}
Since by assumption
$
\alpha_2\alpha_1' < \alpha_3,
$
and $\gamma\otimes\lambda$ is a probability measure, we deduce
\[
\|w-\varphi_N\|_{\L^{\alpha_2\alpha_1'}(\gamma\otimes\lambda)}
\le
\|w-\varphi_N\|_{\L^{\alpha_3}(\gamma\otimes\lambda)}
\le \varepsilon.
\]
Hence
\[
\big\|
(w-\varphi_N)\mathbf{1}_{K_{\Xc}\times K_{\Yc}}
\big\|_{\L^{\alpha_2}(\pi_n)}
\le
C\,\varepsilon,
\]
for some constant $C>0$ independent of $n$. Exactly the same argument applies to
$\pi$, and therefore
\[
I_2\le C\,\varepsilon.
\]
Finally, we treat $I_3$. Since
$
\varphi_N(x,y)\mathbf{1}_{K_{\Xc}}(x)
=
\sum_{i=1}^N \ell_i(y)\,h_i(x)\mathbf{1}_{K_{\Xc}}(x),
$
we have
\[
\E^{\pi_n}[\varphi_N(X,Y)\mathbf{1}_{K_{\Xc}}(X)\mid Y]
=
\sum_{i=1}^N
\ell_i(Y)\,
\E^{\pi_n}[h_i(X)\mathbf{1}_{K_{\Xc}}(X)\mid Y],
\]
and similarly for $\pi$. Since $h_i\mathbf{1}_{K_{\Xc}}\in\L^\infty(\gamma)$, the
assumption yields, for every $1\le i\le N$,
\[
\E^{\pi_n}[h_i(X)\mathbf{1}_{K_{\Xc}}(X)\mid Y]
\longrightarrow
\E^{\pi}[h_i(X)\mathbf{1}_{K_{\Xc}}(X)\mid Y]
\qquad
\text{in }\L^{\alpha_2}(\lambda).
\]
As $\ell_i\in\L^\infty(\lambda)$, we infer that \(
I_3\longrightarrow 0
\ \text{as }n\to\infty.
\)
Combining the previous estimates, we obtain
\[
\limsup_{n\to\infty}
\big\|
\E^{\pi_n}[w(X,Y)\mid Y]
-
\E^{\pi}[w(X,Y)\mid Y]
\big\|_{\L^{\alpha_2}(\lambda)}
\le
C\Big(
\varepsilon^{\,1/\alpha_2-1/\alpha_3}
+\varepsilon
\Big).
\]
Since $\varepsilon>0$ is arbitrary, this proves the result.
\end{proof}

\begin{Lemma} \label{lemma:convergence_bis}
Let $\mu\in \Pc_{1+\varepsilon}(\Xc)$ for some $\varepsilon>0$.
Then, for all $0\le \varepsilon'<\varepsilon$,
\[
\lim_{\delta\to0}
\sup_{\substack{
\gamma\in\Pi(\mu,\mu)\\
\E^\gamma[d_\Xc(X,X')]\le\delta
}}
\E^\gamma\big[d_\Xc(X,X')^{1 + \varepsilon'} \big]
=0.
\]
\end{Lemma}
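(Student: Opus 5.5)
The idea is uniform integrability of $d_\Xc(X,X')^{1+\varepsilon'}$ over all couplings in $\Pi(\mu,\mu)$, combined with interpolation between the small $L^1$ norm and the uniform $L^{1+\varepsilon}$ bound. The case $\varepsilon'=0$ is trivial, since the quantity is bounded by $\delta$. Assume therefore $0<\varepsilon'<\varepsilon$, and fix $x_0\in\Xc$.

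The first step is a uniform $L^{1+\varepsilon}$ bound. By the triangle inequality $d_\Xc(x,x')\le d_\Xc(x,x_0)+d_\Xc(x',x_0)$ and convexity of $s\mapsto s^{1+\varepsilon}$,
\[
\E^\gamma\big[d_\Xc(X,X')^{1+\varepsilon}\big]\le 2^{\varepsilon}\Big(\E^\mu[d_\Xc(X,x_0)^{1+\varepsilon}]+\E^\mu[d_\Xc(X',x_0)^{1+\varepsilon}]\Big)=2^{1+\varepsilon}M,
\]
where $M:=\int d_\Xc(x,x_0)^{1+\varepsilon}\mu(dx)<\infty$. This bound depends only on the marginals, so it holds uniformly over $\gamma\in\Pi(\mu,\mu)$.

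The second step interpolates between $L^1$ and $L^{1+\varepsilon}$. Write $1+\varepsilon'=\theta\cdot 1+(1-\theta)(1+\varepsilon)$ with $\theta=(\varepsilon-\varepsilon')/\varepsilon\in(0,1]$, and set $Z:=d_\Xc(X,X')$. Hölder's inequality with exponents $1/\theta$ and $1/(1-\theta)$ gives
\[
\E^\gamma[Z^{1+\varepsilon'}]=\E^\gamma\big[Z^{\theta}Z^{(1-\theta)(1+\varepsilon)}\big]\le \E^\gamma[Z]^{\theta}\,\E^\gamma[Z^{1+\varepsilon}]^{1-\theta}\le \delta^{\theta}\,(2^{1+\varepsilon}M)^{1-\theta}.
\]
The right-hand side is independent of $\gamma$ and tends to $0$ as $\delta\to0$ because $\theta>0$. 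This proves the claim.

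No step is a real obstacle. The only point needing care is that the $L^{1+\varepsilon}$ bound must be uniform over couplings, and that holds because it depends only on the marginals. An alternative route is truncation: split on $\{Z\le K\}$ and $\{Z>K\}$, and use uniform integrability on the large values. The Hölder interpolation is shorter, so it is the one I would use.
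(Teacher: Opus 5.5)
Your proof is correct, but it takes a different route from the paper. The paper uses exactly the truncation argument you list as an alternative. Writing $D=d_\Xc(X,X')$, it bounds $D^{1+\varepsilon'}\le R^{\varepsilon'}D$ on $\{D\le R\}$ and $D^{1+\varepsilon'}\le R^{-(\varepsilon-\varepsilon')}D^{1+\varepsilon}$ on $\{D>R\}$. This gives $\E^\gamma[D^{1+\varepsilon'}]\le R^{\varepsilon'}\delta+C_\mu R^{-(\varepsilon-\varepsilon')}$, where $C_\mu=2^{1+\varepsilon}\int d_\Xc(x,x_0)^{1+\varepsilon}\mu(dx)$ is the same marginal-only constant as yours. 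The paper then lets $\delta\to0$ first and $R\to\infty$ afterwards.

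Your H\"older (Lyapunov) interpolation does this in one line and gives the explicit modulus $\delta^{\theta}(2^{1+\varepsilon}M)^{1-\theta}$. The paper's bound reaches the same order $\delta^{\theta}$ only if you optimize in $R$ (take $R=\delta^{-1/\varepsilon}$), which the paper does not do.

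What the truncation viewpoint buys is flexibility. On $\{D>R\}$, replace the Markov-type bound by the uniform integrability of $D^{1+\varepsilon}$ over $\Pi(\mu,\mu)$. This holds because $D^{1+\varepsilon}$ is dominated by $2^{\varepsilon}\bigl(d_\Xc(X,x_0)^{1+\varepsilon}+d_\Xc(X',x_0)^{1+\varepsilon}\bigr)$, and both summands have laws fixed by $\mu$. The same splitting then gives the conclusion even at the endpoint $\varepsilon'=\varepsilon$, though without a rate. Interpolation degenerates there, since $\theta=0$.
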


\begin{proof}
Fix $0\le \varepsilon'<\varepsilon$ and write
$
D:=d_\Xc(X,X').
$
Let $R>0$ and let $\gamma\in\Pi(\mu,\mu)$ satisfy
$
\E^\gamma[D]\le \delta.
$
We split
\[
\E^\gamma[D^{1+\varepsilon'}]
=
\E^\gamma[D^{1+\varepsilon'}\mathbf 1_{\{D\le R\}}]
+
\E^\gamma[D^{1+\varepsilon'}\mathbf 1_{\{D>R\}}].
\]
On the set $\{D\le R\}$, we have $D^{\varepsilon'}\le R^{\varepsilon'}$, hence
\[
\E^\gamma[D^{1+\varepsilon'}\mathbf 1_{\{D\le R\}}]
\le
R^{\varepsilon'}\E^\gamma[D]
\le
R^{\varepsilon'}\delta.
\]
For the second term, since $\varepsilon'<\varepsilon$, on the set $\{D>R\}$ we have
\[
D^{1+\varepsilon'}
=
D^{1+\varepsilon}D^{-(\varepsilon-\varepsilon')}
\le
R^{-(\varepsilon-\varepsilon')}D^{1+\varepsilon}.
\]
Therefore
\( 
\E^\gamma[D^{1+\varepsilon'}\mathbf 1_{\{D>R\}}]
\le
R^{-(\varepsilon-\varepsilon')}
\E^\gamma[D^{1+\varepsilon}].
\)
It remains to bound the last term uniformly in $\gamma$. Fix $x_0\in\Xc$. By the triangle inequality,
\[
d_\Xc(X,X')\le d_\Xc(X,x_0)+d_\Xc(X',x_0).
\]
Thus, using $(a+b)^{1+\varepsilon}\le 2^\varepsilon(a^{1+\varepsilon}+b^{1+\varepsilon})$,
\[
D^{1+\varepsilon}
\le
2^\varepsilon
\Big(
d_\Xc(X,x_0)^{1+\varepsilon}
+
d_\Xc(X',x_0)^{1+\varepsilon}
\Big).
\]
Since $\gamma\in\Pi(\mu,\mu)$, both marginals of $\gamma$ are equal to $\mu$, and hence
\[
\E^\gamma[D^{1+\varepsilon}]
\le
2^{1+\varepsilon}
\int_{\Xc} d_\Xc(x,x_0)^{1+\varepsilon}\,\mu(dx)
=:C_\mu<\infty.
\]
Consequently,
$
\E^\gamma[D^{1+\varepsilon'}]
\le
R^{\varepsilon'}\delta
+
C_\mu R^{-(\varepsilon-\varepsilon')}.
$
Taking the supremum over all admissible $\gamma$ yields
\[
\sup_{\substack{
\gamma\in\Pi(\mu,\mu)\\
\E^\gamma[d_\Xc(X,X')]\le\delta
}}
\E^\gamma\big[d_\Xc(X,X')^{1+\varepsilon'}\big]
\le
R^{\varepsilon'}\delta
+
C_\mu R^{-(\varepsilon-\varepsilon')}.
\]
We first let $\delta\to0$, for fixed $R>0$, and then let $R\to\infty$. This gives
\[
\lim_{\delta\to0}
\sup_{\substack{
\gamma\in\Pi(\mu,\mu)\\
\E^\gamma[d_\Xc(X,X')]\le\delta
}}
\E^\gamma\big[d_\Xc(X,X')^{1+\varepsilon'}\big]
=0.
\]
\end{proof}

\subsection{Existence and uniqueness}
We now turn to the proof of existence. Throughout this subsection, we fix
\[
    a:\R_+\times\Xc\times\R^d\times\Pc_1(\R^d)\longrightarrow\R^d,
    \qquad
    \pi_0\in\Pi(\mu,\nu),
\]
and assume that the following conditions hold.

\begin{Assumption} \label{ass:a-existence-solution}
The coefficient $a$ and the initial coupling $\pi_0$ satisfy the following
conditions.

\begin{enumerate}[label=(\roman*), ref=(\roman*)]
    \item \label{ass:a-existence-solution:lipschitz}
    \emph{Lipschitz continuity in $(x,\rho)$.}
    There exists a constant $C_a>0$ such that, for all
    $t\ge0$, $x,x'\in\Xc$, $y\in\R^d$, and
    $\rho,\rho'\in\Pc_1(\R^d)$,
    \[
        \big|
        a(t,x,y,\rho)-a(t,x',y,\rho')
        \big|
        \le
        C_a\Big(
            d_\Xc(x,x')+\Wc_1(\rho,\rho')
        \Big).
    \]

    \item \label{ass:a-existence-solution:exp}
    \emph{Uniform exponential integrability.}
    For every $T>0$, there exists $r^\ast>0$ such that
    \[
        \sup_{\pi \in\Pi(\mu,\nu)}
        \int_0^T
        \E^\pi\left[
            \exp\Big(
                r^\ast |a(s,X,Y,\pi_X)|^2
            \Big)
        \right]
        \,ds
        <\infty,
    \]
    where $\pi_X$ denotes a regular conditional distribution of $Y$ given $X$
    under $\pi$.

    \item \label{ass:unif-continuity-pi0}
    \emph{Uniform entropy continuity of the initial kernel.}
    Writing
    \[
        \pi_0(dx,dy)
        =
        \mu(dx)\,\pi^0_x(dy)
    \]
    for a disintegration of $\pi_0$ with respect to its first marginal, we assume
    that
    \[
        \lim_{\delta\rightarrow0}
        \sup_{\substack{
            \gamma\in\Pi(\mu,\mu)\\
            \E^\gamma[d_\Xc(X,X')]\le\delta
        }}
        \E^\gamma\left[
            H\bigl(\pi^0_X\,\vert\,\pi^0_{X'}\bigr)
        \right]
        =
        0.
    \]
\end{enumerate}
\end{Assumption}

\begin{Remark}
Assumption~\ref{ass:a-existence-solution:exp} is automatically satisfied in many
standard situations. For instance, if there exist
$x_0\in\Xc$ and $C>0$ such that
\[
    |a(s,x,y,\rho)|
    \le
    C\left(
        1+d_\Xc(x,x_0)+|y|
        +
        \int_{\R^d}|z|\,\rho(dz)
    \right),
\]
then the exponential integrability condition follows from suitable exponential
moment assumptions on $\mu$ and $\nu$.
Assumption~\ref{ass:unif-continuity-pi0} is satisfied if $\pi_0=\mu\otimes\nu$.
\end{Remark}

\subsubsection{Existence of a solution in the case where $a$ is bounded and $\mu$ has finite support.}
In this subsubsection, we first consider the simplified case where the
interaction coefficient is bounded and the first marginal $\mu$ has finite
support. In this setting, we prove the existence of a weak solution to
\eqreftag{eq:MV-a} for every bounded measurable coefficient $a$ and every
initial coupling $\pi_0\in\Pi(\mu,\nu)$.

\begin{Proposition} \label{prop:exists:finitely-supported-measure-bounded-a}
Assume that $\mu$ is finitely supported. More precisely, assume that there exist
$x_1,\dots,x_N\in\Xc$ and $\alpha_1,\dots,\alpha_N>0$, with
$\sum_{i=1}^N\alpha_i=1$, such that
\[
    \mu(dx)
    =
    \sum_{i=1}^N \alpha_i\,\delta_{x_i}(dx),
\]
where $\delta_{x_i}$ denotes the Dirac measure at $x_i$, and
\[
    \operatorname{Supp}(\mu)=\{x_1,\dots,x_N\}.
\]
Let $\pi_0\in\Pi(\mu,\nu)$, and 
the coefficient $a$ in \eqreftag{eq:MV-a} be bounded and measurable.
Then under Assumption~\ref{ass:Laws existence} and Assumption~\ref{ass:a-existence-solution} (i), equation \eqreftag{eq:MV-a} admits a weak solution with initial law
$\pi_0$.
\end{Proposition}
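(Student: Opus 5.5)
We reduce the problem to a finite system of coupled nonlinear diffusions and then run a fixed-point/compactness argument on the law of the path. Since $\mu=\sum_i\alpha_i\delta_{x_i}$, a coupling $\pi\in\Pi(\mu,\nu)$ is determined by the $N$ conditional laws $\pi_{x_i}=:\rho^i\in\Pc(\R^d)$, subject to $\sum_i\alpha_i\rho^i=\nu$. The conditional expectation given $Y=y$ becomes explicit: if $\rho^i$ has density $\rho^i(y)$, then
\[
\E^\pi[a(t,X,y,\pi_X)\mid Y=y]=\sum_{i=1}^N \frac{\alpha_i\rho^i(y)}{\sum_j\alpha_j\rho^j(y)}\,a(t,x_i,y,\rho^i),
\]
which is a convex combination of the $a(t,x_i,y,\rho^i)$. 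It is therefore bounded by $\|a\|_\infty$, whatever the densities. The drift $\hat b$ thus equals $-\nabla V$ plus a term bounded by $2\|a\|_\infty$. This uniform bound is what makes the bounded, finitely supported case tractable.

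\textbf{Step 1: Girsanov framework.} Let $Q$ be the law on $\Xc\times C([0,T];\R^d)$ of $(X,Y)$, where $(X,Y_0)\sim\pi_0$ and $Y$ solves the reference Langevin equation \eqref{eqdef:refLangevin} driven by an independent Brownian motion. This is well posed by Assumption~\ref{ass:Laws existence}(i), and stationarity of $\nu$ gives $Q\circ(X,Y_t)^{-1}\in\Pc(\Xc\times\R^d)$ with $Y_t\sim\nu$. For any bounded progressive drift perturbation $\beta(t,x,y)$, Girsanov yields a law $P^\beta\ll Q$. Lemma~\ref{prop:entropy estimates} gives the uniform bound $H(P^\beta\mid Q)\le T\|\beta\|_\infty^2/4\le T\|a\|_\infty^2$.

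\textbf{Step 2: Fixed-point map.} On the set $\mathcal{K}:=\{P\ll Q:\ H(P\mid Q)\le T\|a\|_\infty^2\}$, define $\Phi(P):=P^{\beta_P}$, where
\[
\beta_P(t,x,y)=\E^{P_t}[a(t,X,y,(P_t)_X)\mid Y=y]-a(t,x,y,(P_t)_x),
\]
with $P_t=\Lc(X,Y_t)$ under $P$ and the conditional expectation taken in the explicit convex-combination form above. Conditional laws of $Y_t$ given $X$ are defined on the atoms only. Given a solution, Proposition~\ref{Prop:mimick} and Lemma 3.2 of \cite{conforti2023projected} show that the second marginal remains $\nu$. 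For the fixed-point argument itself we do not need this constraint: $\beta_P$ is well defined and bounded for any $P$.

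\textbf{Step 3: Compactness and continuity.} The set $\mathcal{K}$ is convex and weakly compact in $L^1(Q)$, by Dunford--Pettis, since entropy sublevel sets are uniformly integrable. The main obstacle is continuity of $\Phi$. The coefficient $a$ is Lipschitz in $\rho$ for $\Wc_1$ by Assumption~\ref{ass:a-existence-solution}(i), but only measurable in $y$, and the weights $\rho^i(y)/\sum_j\alpha_j\rho^j(y)$ are ratios of densities. I would therefore work with the topology of setwise convergence (weak $L^1(Q)$). Under this topology, $P^n\to P$ implies that the time marginals $P^n_t$ converge setwise. This in turn gives $\rho^{i,n}_t\to\rho^i_t$ in $\Wc_1$, using uniform moments inherited from $Q$ via the entropy bound, and weak-$L^1$ convergence of the densities $\rho^{i,n}_t$ relative to the reference marginal.

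The difficulty is that ratios of weakly converging densities need not converge. To handle this, I would note that by Proposition~\ref{prop:estim}(ii) the weights $w^n_i(t,y)=\E[\mathbf 1_{X=x_i}\mid Y_t=y]$ are locally Hölder, uniformly in $n$ because the drift is bounded. Arzel\`a--Ascoli then gives locally uniform convergence along subsequences. Since any limit is identified as the weight of $P$, the whole sequence converges. Consequently $\beta_{P^n}\to\beta_P$ pointwise $dt\otimes Q$-a.e. with a uniform bound. The Girsanov densities
\[
\exp\Big(\int\beta\,dW-\tfrac12\int|\beta|^2\Big)
\]
then converge in $L^1(Q)$ by dominated convergence and the $L^2$ isometry. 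This gives continuity of $\Phi$ on $\mathcal K$ with the weak topology (even strong convergence of the image).

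\textbf{Step 4: Conclusion.} Apply the Schauder--Tychonoff fixed-point theorem to $\Phi:\mathcal K\to\mathcal K$ in the locally convex space $L^1(Q)$ with its weak topology, on which $\mathcal K$ is compact and convex. A fixed point $P=\Phi(P)$ is a weak solution of \eqreftag{eq:MV-a} on $[0,T]$. The integrability condition of Definition~\ref{Def:weak-sol:MV-a} holds since the perturbation is bounded and $\nabla V\in L^p(\nu)$, while $Y_t\sim\nu$ by Proposition~\ref{Prop:mimick}. Consistency across $T$ follows by a diagonal extension, restricting fixed points on $[0,T']$ to $[0,T]$, or by working on $C(\R_+)$ with $T$-localized entropy bounds.
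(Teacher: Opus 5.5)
\textbf{Gap: continuity of $\Phi$ in Step~3.} Your route differs from the paper's. The paper runs Schauder on $N$-tuples of marginal curves in $C([0,T],\Pc_2(\R^d))^N$, with compactness from Girsanov moment and H\"older-in-time bounds on the image. You instead run Schauder--Tychonoff on path laws in weak $L^1(Q)$. Your compactness is fine, but continuity of $\Phi$ on $\mathcal K$ is not established, and in fact it fails.

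You apply Proposition~\ref{prop:estim}(ii) to an arbitrary convergent sequence $P^n\to P$ in $\mathcal K$. That result assumes three things:
\begin{enumerate}[label=(\alph*)]
\item $P$ is the law of a diffusion with a Markovian drift $b(t,X,Y_t)$;
\item this drift satisfies $C_1(R,T,b)<\infty$ with $p>d+2$;
\item $\Lc(Y_t)=\nu$ for all $t$; its proof uses $w_\phi=e^{V}\int\phi\,\tilde\P_t\,d\mu$.
\end{enumerate}
A generic $P\in\mathcal K$ is only a finite-entropy perturbation of $Q$. Its drift is path-dependent and merely in $L^2(P)$, and its $Y$-marginal is not $\nu$. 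Its $X$-marginal need not be $\mu$, so conditional laws at the atoms may be undefined. Even elements of $\Phi(\mathcal K)$ have $Y_t\not\sim\nu$ unless they are fixed points.

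\textbf{Counterexample.} Let $P:=Q(\cdot\mid \sup_{t\le T}|Y_t|<R)$. Then $H(P\mid Q)=-\log Q(\sup_{t\le T}|Y_t|<R)\to0$ as $R\to\infty$, so $P\in\mathcal K$ for $R$ large. Set $P^n:=(1-\tfrac1n)P+\tfrac1n P'$ with, e.g., $P'=Q$; then $P^n\in\mathcal K$ by convexity. We have $P^n\to P$ even in norm. For $|y|\ge R$, however, $P_t$ gives no mass there, so for every $n$ the weights $\E^{P^n_t}[\mathbf 1_{\{X=x_i\}}\mid Y=y]$ equal those of $P'_t$. For $P$ itself these weights are fixed only by an arbitrary convention.

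Since $\Phi(P)$ charges $\{|y|\ge R\}$ (bounded perturbation of a full-support diffusion), $\Phi(P^n)$ converges to the Girsanov law that uses the $P'$-weights there. That law differs from $\Phi(P)$ as soon as the $a(t,x_i,y,\cdot)$ are not all equal there. Varying $P'$ shows that no convention makes $\Phi$ continuous at $P$. So your step ``any limit is identified as the weight of $P$'' fails exactly where you need it. Convergence $dt\otimes Q$-a.e.\ requires control on all of $\R^d$, but the weights are determined only where $\sum_j\alpha_j\rho^j_t>0$.

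\textbf{Repair.} Shrink the domain to a convex, weakly compact, $\Phi$-invariant set, for instance the weak closure of the convex hull of $\Phi(\mathcal K)$. This set lies in $\mathcal K$ and all its elements have initial law $\pi_0$. On it, for $t\ge\delta$, you need the densities $\rho^i_t$ to be uniformly locally H\"older and locally bounded below. Both properties survive convex combinations and weak limits. The required inputs are not Proposition~\ref{prop:estim}(ii), but:
\begin{enumerate}[label=(\alph*)]
\item parabolic regularity for the individual Fokker--Planck equations with drift $-\nabla V$ plus a perturbation bounded by $2\|a\|_\infty$;
\item a lower bound such as $P^\beta_t(A)\ge e^{-T\|\beta\|_\infty^2}Q_t(A)^2$, which follows from Cauchy--Schwarz and $\E^{Q}[dQ/dP^\beta]\le e^{T\|\beta\|_\infty^2}$.
\end{enumerate}
With these, the ratios $\alpha_i\rho^{i,n}_t/\sum_j\alpha_j\rho^{j,n}_t$ converge locally uniformly on $(0,T]\times\R^d$, and the rest of your Step~3 (Girsanov densities converging in $L^1(Q)$) goes through.
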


\begin{proof}
The proof is based on a Schauder fixed point argument.
Let
$
    \Cc_{d,T}:=C\bigl([0,T],\Pc_2(\R^d)\bigr).
$
Write
\[
    \pi_0(dx,dy)
    =
    \sum_{i=1}^N
    \alpha_i\,\delta_{x_i}(dx)\,\pi^0_{x_i}(dy)
\]
for a disintegration of $\pi_0$ with respect to $\mu$.
For
$
    \mathbf P=(P^1,\dots,P^N)\in \Cc_{d,T}^N,
$
define, for each $t\in[0,T]$,
\[
    P_t(dx,dy)
    :=
    \sum_{i=1}^N
    \alpha_i\,\delta_{x_i}(dx)\,P_t^i(dy).
\]
We also define
\[
    \tilde{c}^{\mathbf P}(t,y)
    :=
    \E^{P_t}
    \bigl[
        a(t,X,y,P^X_t)
        \mid Y=y
    \bigr],
    \,\, \text{and} \,\, 
    a_i^{\mathbf P}(t,y)
    :=
    a(t,x_i,y,P_t^i),
    \qquad 1\le i\le N,
\]
where $P_t^X=P_t^i$ on the event $\{X=x_i\}$.
Since \(X\) only takes finitely many values, \(\tilde{c}^{\mathbf P}\) can be written as
\[
    \tilde{c}^{\mathbf P}(t,y)
    =
    \sum_{i=1}^N
    \alpha_i\,p_i^{\mathbf P}(t,y)\,
    a(t,x_i,y,P_t^i),
\text{ where } \, 
    p_i^{\mathbf P}(t,y)
    :=
    \frac{d P_t^i}{d m_t^{\mathbf P}}(y), \,\,
    \text{ and } \,
    m_t^{\mathbf P}:=\sum_{j=1}^N\alpha_j P_t^j.
\]
Up to choosing measurable versions of the Radon--Nikodym derivatives,
\((t,y)\mapsto \tilde{c}^{\mathbf P}(t,y)\) is Borel-measurable. Moreover, since \(a\)
is bounded,
\(
    \|\tilde{c}^{\mathbf P}\|_\infty\le \|a\|_\infty.
\)

Let \(\bar{\P}^{\,i}\) denote the law on \(C([0,T];\R^d)\) of the reference diffusion
\[
    dY_t
    =
    -\nabla V(Y_t)\,dt
    +
    \sqrt{2}\,dB_t,
    \qquad
    Y_0\sim\pi^0_{x_i}.
    \tag{\ref{eqdef:refLangevin}}
\]
This equation is well-posed by
Assumption~\ref{ass:Laws existence} \ref{Well-posed:SDE Ref}.
Given \(\mathbf P\in\Cc_{d,T}^N\), and for each \(1\le i\le N\), let
\(\Q_i(\mathbf P)\) be the law of a weak solution of
\[
    dY_t^i
    =
    \Big(
        \tilde{c}^{\mathbf P}(t,Y_t^i)
        -
        a_i^{\mathbf P}(t,Y_t^i)
        -
        \nabla V(Y_t^i)
    \Big)dt
    +
    \sqrt{2}\,dB_t,
    \qquad
    Y_0^i\sim\pi^0_{x_i}.
\]
Since the additional drift
\[
    \tilde{c}^{\mathbf P}(t,y)-a_i^{\mathbf P}(t,y)
\]
is bounded, Girsanov's theorem and the well-posedness of the reference equation
imply weak existence and uniqueness in law. Thus \(\Q_i(\mathbf P)\) is
well defined.
We set
\[
    \Psi_i(\mathbf P)_t
    :=
    \Q_i(\mathbf P)\circ (Y_t^i)^{-1},
    \qquad t\in[0,T],
\]
and define
\[
    \Psi(\mathbf P)
    :=
    \bigl(\Psi_1(\mathbf P),\dots,\Psi_N(\mathbf P)\bigr).
\]
A fixed point of \(\Psi\) gives a solution to the conditional Fokker--Planck
system \eqreftag{eq:cond-Fokker-planck-a}, and therefore a weak solution to
\eqreftag{eq:MV-a} by the mimicking result, Proposition~\ref{Prop:mimick}. Hence it remains to prove that \(\Psi\) has a fixed point.

\medskip

\noindent
\emph{Step 1: uniform estimates.} By Girsanov's theorem, for every \(q\in[1,\infty)\), there exists a constant
\(C_q>0\), depending only on \(q,T,\|a\|_\infty\), and the reference dynamics,
such that
\[
    \sup_{1\le i\le N}
    \sup_{\mathbf P\in\Cc_{d,T}^N}
    \left\|
        \frac{d\Q_i(\mathbf P)}{d\bar{\P}^{\,i}}
    \right\|_{\L^q(\bar{\P}^{\,i})}
    \le C_q.
\]
Fix \(0<\varepsilon<1\), and choose \(q>1\) such that
$
    (2+\varepsilon)q=d+2.
$
Let \(q'\) be its conjugate exponent. By Hölder's inequality,
\[
\begin{aligned}
&\sup_{1\le i\le N}
  \sup_{\mathbf P\in\Cc_{d,T}^N}
  \sup_{0\le t\le T}
  \int_{\R^d}|y|^{2+\varepsilon}\,
  \Psi_i(\mathbf P)_t(dy)
\\
&\qquad\le
\sup_{1\le i\le N}
\sup_{\mathbf P\in\Cc_{d,T}^N}
\E^{\Q_i(\mathbf P)}
\left[
    \sup_{0\le t\le T}|Y_t^i|^{2+\varepsilon}
\right]
\\
&\qquad\le
\sup_{1\le i\le N}
\left(
\E^{\bar{\P}^{\,i}}
\left[
    \sup_{0\le t\le T}|Y_t|^{d+2}
\right]
\right)^{1/q}
\sup_{1\le i\le N}
\sup_{\mathbf P\in\Cc_{d,T}^N}
\left\|
    \frac{d\Q_i(\mathbf P)}{d\bar{\P}^{\,i}}
\right\|_{\L^{q'}(\bar{\P}^{\,i})} \le C_1.
\end{aligned}
\]
Thus the image of \(\Psi\) satisfies a uniform \((2+\varepsilon)\)-moment bound.
Similarly, for \(0\le s\le t\le T\),
\[
\begin{aligned}
&\sup_{1\le i\le N}
  \sup_{\mathbf P\in\Cc_{d,T}^N}
  \Wc_2^2\bigl(
      \Psi_i(\mathbf P)_t,
      \Psi_i(\mathbf P)_s
  \bigr)
\\
&\qquad\le
\sup_{1\le i\le N}
\sup_{\mathbf P\in\Cc_{d,T}^N}
\E^{\Q_i(\mathbf P)}
\left[
    |Y_t^i-Y_s^i|^2
\right]
\\
&\qquad\le
\sup_{1\le i\le N}
\left(
\E^{\bar{\P}^{\,i}}
\left[
    |Y_t-Y_s|^{2q}
\right]
\right)^{1/q}
\sup_{1\le i\le N}
\sup_{\mathbf P\in\Cc_{d,T}^N}
\left\|
    \frac{d\Q_i(\mathbf P)}{d\bar{\P}^{\,i}}
\right\|_{\L^{q'}(\bar{\P}^{\,i})}
\\
&\qquad\le
C |t-s|^{\beta},
\end{aligned}
\]
for some \(\beta>0\). Hence, setting \(\alpha:=\beta/2\), we obtain
\[
    \Wc_2\bigl(
      \Psi_i(\mathbf P)_t,
      \Psi_i(\mathbf P)_s
    \bigr)
    \le
    C_2 |t-s|^\alpha,
    \qquad 0\le s,t\le T.
\]
Define
\[
\begin{aligned}
K_0:=\Big\{
\mathbf P=(P^1,\dots,P^N)\in\Cc_{d,T}^N
:\;&
\sup_{1\le i\le N}\sup_{0\le t\le T}
\int_{\R^d}|y|^{2+\varepsilon}P_t^i(dy)
\le C_1,
\\
&
\sup_{1\le i\le N}
\Wc_2(P_t^i,P_s^i)
\le C_2|t-s|^\alpha,
\quad
0\le s,t\le T
\Big\}.
\end{aligned}
\]
The previous estimates show that
$
    \Psi(\Cc_{d,T}^N)\subset K_0.
$
The set \(K_0\) is convex. Moreover, the uniform \((2+\varepsilon)\)-moment bound
gives compactness of the time marginals in \(\Pc_2(\R^d)\), while the uniform
Hölder modulus in time gives equicontinuity. Therefore, by Arzelà--Ascoli,
\(K_0\) is compact in \(\Cc_{d,T}^N\).
It remains to prove that \(\Psi:K_0\to K_0\) is continuous. Schauder's fixed
point theorem will then yield a fixed point.

\medskip

\noindent
\emph{Step 2: continuity of \(\Psi\).}
Let \(\mathbf P^n=(P^{1,n},\dots,P^{N,n})\in K_0\) be such that
$
    \mathbf P^n\longrightarrow \mathbf P
    \  \text{in } \Cc_{d,T}^N.
$
Since \(K_0\) is compact and \(\Psi(K_0)\subset K_0\), it is enough to prove
pointwise convergence in time:
\[
    \Psi_i(\mathbf P^n)_t
    \longrightarrow
    \Psi_i(\mathbf P)_t
    \qquad\text{in }\Wc_2,
    \qquad \forall \, 
    t\in[0,T],
    \quad 1\le i\le N.
\]
Indeed, the compactness of \(K_0\) provides a common modulus of continuity, so
pointwise convergence in \(t\) implies uniform convergence in \(t\).
The uniform \((2+\varepsilon)\)-moment bound also implies uniform integrability
of the second moments. Therefore, weak convergence together with this moment
control is equivalent to convergence in \(\Wc_2\). It is thus enough to prove
weak convergence of the time marginals.
We prove the stronger statement
\[
    H\bigl(
        \Psi_i(\mathbf P^n)_t
        \,\big|\,
        \Psi_i(\mathbf P)_t
    \bigr)
    \longrightarrow 0,
    \qquad t\in[0,T].
\]
Let us write \(\Q_i^n:=\Q_i(\mathbf P^n)\) and
\(\Q_i:=\Q_i(\mathbf P)\). By the entropy estimate and the data-processing
inequality, for every \(t\in[0,T]\),
\[
\begin{aligned}
H\bigl(
    \Psi_i(\mathbf P^n)_t
    \,\big|\,
    \Psi_i(\mathbf P)_t
\bigr)
&\le
H\bigl(
    \Q_i^n
    \,\big|\,
    \Q_i
\bigr)
\\
&\le
C
\int_0^T
\E^{\Q_i^n}
\Big[
\big|
    \tilde{c}^{\mathbf P}(s,Y_s^i)
    -
    \tilde{c}^{\mathbf P}(s,Y_s^i)
    -
    a_i^{\mathbf P^n}(s,Y_s^i)
    +
    a_i^{\mathbf P}(s,Y_s^i)
\big|^2
\Big]
\,ds.
\end{aligned}
\]
Thus it suffices to prove that the two drift contributions converge to zero.
First, by the Lipschitz continuity of \(a\) with respect to the measure
argument,
\[
\begin{aligned}
\int_0^T
\E^{\Q_i^n}
\Big[
\big|
    a_i^{\mathbf P^n}(s,Y_s^i)
    -
    a_i^{\mathbf P}(s,Y_s^i)
\big|^2
\Big]
\,ds
&\le
C
\int_0^T
\Wc_1^2(P_s^{i,n},P_s^i)
\,ds
\\
&\le
CT
\sup_{0\le s\le T}
\Wc_2^2(P_s^{i,n},P_s^i)
\longrightarrow 0.
\end{aligned}
\]
We now handle the conditional term \(\tilde{c}^{\mathbf P}-\tilde{c}^{\mathbf P}\). By
Proposition~\ref{prop:estim}, the time marginals of the laws under consideration
admit locally Hölder continuous densities, with estimates uniform in \(n\). This
regularity, together with the convergence
\[
    P_t^{i,n}\longrightarrow P_t^i
    \qquad\text{in }\Wc_2,
\]
implies the convergence in total variation of $P^n_t$.
Following the argument of Proposition~3.8 in
\cite{conforti2023projected}, and using Lemma~3.4 therein, we obtain
\[
    \int_0^T
    \E^{\Q_i^n}
    \left[
        \big|
        \tilde{c}^{\mathbf P}(s,Y_s^i)
        -
        \tilde{c}^{\mathbf P}(s,Y_s^i)
        \big|^2
    \right]
    \,ds
    \longrightarrow 0.
\]
Consequently,
\[
    H\bigl(
        \Psi_i(\mathbf P^n)_t
        \,\big|\,
        \Psi_i(\mathbf P)_t
    \bigr)
    \longrightarrow 0,
    \qquad t\in[0,T].
\]
By Pinsker's inequality, this implies weak convergence of the time marginals.
Combined with the uniform \((2+\varepsilon)\)-moment bound, it yields
\[
    \Psi_i(\mathbf P^n)_t
    \longrightarrow
    \Psi_i(\mathbf P)_t
    \qquad\text{in }\Wc_2,
    \qquad t\in[0,T].
\]
As explained above, the compactness of \(K_0\) then upgrades this pointwise
convergence to convergence in \(\Cc_{d,T}^N\). Hence \(\Psi:K_0\to K_0\) is
continuous.
By Schauder's fixed point theorem, \(\Psi\) admits a fixed point
$
    \mathbf P^\star=(P^{1,\star},\dots,P^{N,\star})\in K_0.
$
The associated measure-valued curve
$
    P_t^\star(dx,dy)
    :=
    \sum_{i=1}^N
    \alpha_i\,\delta_{x_i}(dx)\,
    P_t^{i,\star}(dy)
$
solves the conditional Fokker--Planck equation
\eqreftag{eq:cond-Fokker-planck-a}. Therefore, by the mimicking result,
Proposition~\ref{Prop:mimick}, equation \eqreftag{eq:MV-a} admits a weak
solution with initial law \(\pi_0\).
\end{proof}

\subsubsection{Existence of a Solution in the case where $\mu$ is generic and $a$ is bounded.}

\begin{Proposition} \label{prop:exists:general-measure-continuous-a}
Let
\(
a:\R_+\times\Xc\times\R^d \times \Pc_1(\R^d) \longrightarrow\R^d
\)
be bounded and satisfy Assumption~\ref{ass:Laws existence}, Assumption~\ref{ass:a-existence-solution}. Let $\mu\in\Pc_p(\Xc)$ and
$\pi_0\in\Pi(\mu,\nu)$. Then \eqreftag{eq:MV-a} admits a weak solution.
\end{Proposition}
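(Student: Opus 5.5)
We approximate $\mu$ by finitely supported measures, solve the problem for each approximation using Proposition~\ref{prop:exists:finitely-supported-measure-bounded-a}, and pass to the limit in the Fokker--Planck formulation \eqreftag{eq:Fokker-planck-a}. We then recover a weak solution through Proposition~\ref{Prop:mimick}.

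\textbf{Step 1: approximation.} Choose finitely supported $\mu^n$ and a measurable map $T_n:\Xc\to\Xc$ with $\mu^n=(T_n)_\#\mu$ and $\E^\mu[d_\Xc(X,T_n(X))^p]\to0$, for instance a nearest-point quantization on an increasing finite net. Set $\gamma_n:=(\mathrm{id},T_n)_\#\mu$, so that $\Wc_p(\mu^n,\mu)\to0$. Define the initial couplings by averaging the kernel of $\pi_0$ over the cells,
\[
\pi^{0,n}_{x_i}:=\frac{1}{\mu(T_n^{-1}(x_i))}\int_{T_n^{-1}(x_i)}\pi^0_x\,\mu(dx).
\]
This gives $\pi_0^n\in\Pi(\mu^n,\nu)$, and Assumption~\ref{ass:a-existence-solution}\ref{ass:unif-continuity-pi0} together with convexity of entropy yields $\E^\mu[H(\pi^{0,n}_{T_n(X)}\,|\,\pi^0_X)]\to0$. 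Proposition~\ref{prop:exists:finitely-supported-measure-bounded-a} then provides solutions $\P^n_t\in\Pi(\mu^n,\nu)$. Since $a$ is bounded, the drifts are uniformly bounded up to $\nabla V\in L^p(\nu)$. Girsanov's theorem against the reference Langevin dynamics (Lemma~\ref{prop:entropy estimates}) gives uniform entropy bounds, uniform moments, and uniform time-H\"older bounds in $\Wc_2$. The constant $C_1(R,T,b)$ of Proposition~\ref{prop:estim} is bounded uniformly in $n$, so the density bound \eqref{eq:density-bound-Pt} and the H\"older estimate \eqref{eq:holder-bound-wphi} hold uniformly in $n$.

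\textbf{Step 2: compactness in the adapted topology.} To control the nonlinear dependence on $(\P_t)_x$, we transport everything onto $\mu$. Define the pulled-back kernel $\hat\P^n_{t,x}:=\P^n_{t,T_n(x)}$, viewed as a flow in $\Pi(\mu,\nu)$ up to a vanishing error. For a.e.\ $x$, the kernels $\hat\P^n_{t,x}$ are uniformly controlled in relative entropy with respect to the reference law of the Langevin dynamics started from $\pi^0_x$. This is a tightness condition on $\mathcal R(\hat\P^n_t)$ in $\Pc(\Xc\times\Pc(\R^d))$, and by the compactness results of \cite{eder2019compactness} it gives relative compactness in $\AW_1$ on a dense set of times, upgraded by equicontinuity.

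We also need to show that any limit point is self-aware, i.e.\ that the kernel does not become randomized. To get this, compare two conditional flows started at nearby $x,x'$. The entropy formula of Lemma~\ref{prop:entropy estimates} bounds $H(\P^n_{\cdot,x}\,|\,\P^n_{\cdot,x'})$ by $H(\pi^{0,n}_x|\pi^{0,n}_{x'})$ plus $\tfrac14\int_0^T\E|a(\cdot,x,\cdot)-a(\cdot,x',\cdot)|^2$. The common term $\hat A$ cancels, and the Lipschitz property of $a$ bounds the remainder by $C_a^2(d(x,x')+\Wc_1(\P^n_{s,x},\P^n_{s,x'}))^2$. Since $\Wc_1$ is controlled by entropy via the uniform moment bounds (a Talagrand/Pinsker-type estimate on bounded sets), a Gr\"onwall argument, combined with Assumption~\ref{ass:a-existence-solution}\ref{ass:unif-continuity-pi0} and Lemma~\ref{lemma:convergence_bis}, yields a modulus of continuity $x\mapsto\P^n_{t,x}$ that is uniform in $n$ in the averaged sense under couplings $\gamma\in\Pi(\mu,\mu)$. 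This uniform equicontinuity in $x$ makes the limit adapted, so along a subsequence $\hat\P^n_t\to\P_t$ in $\AW_1$ for all $t$.

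\textbf{Step 3: passing to the limit.}
\begin{itemize}
\item The term $a(t,x,y,(\P^n_t)_x)$ converges in $L^2$ thanks to $\AW_1$-convergence and the Lipschitz property of $a$.
\item For the term $\hat A^n(t,y)=\E^{\P^n_t}[a(t,X,y,(\P^n_t)_X)\mid Y=y]$, I would use Proposition~\ref{prop:bootstrap cond exp}. Its hypothesis (iv) comes from the uniform H\"older bounds \eqref{eq:holder-bound-wphi} on $w_\phi$ via Arzel\`a--Ascoli plus identification of the limit. Its density bounds (ii) come from \eqref{eq:density-bound-Pt}, and its moment bounds (v) hold because $a$ is bounded.
\item The Fokker--Planck identity \eqreftag{eq:Fokker-planck-a} then passes to the limit, with Lemma~\ref{prop:bootstrap} handling the bounded-measurable integrands.
\item The marginal constraint $\P_t\in\Pi(\mu,\nu)$ is preserved in the limit.
\end{itemize}
Proposition~\ref{Prop:mimick} then concludes.

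\textbf{Main obstacle.} The hardest part is Step 2: establishing the uniform-in-$n$ equicontinuity of $x\mapsto\P^n_{t,x}$ so that the $\AW$ limit keeps the form $\mu(dx)\P_{t,x}(dy)$. The key point is the cancellation of $\hat A$ in the entropy comparison between $x$ and $x'$. The Gr\"onwall closure also needs care, because $\Wc_1$ must be controlled by entropy using only the moment bounds that are available.
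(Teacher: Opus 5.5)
\textbf{Gap: closing the Gr\"onwall step in $x$.} Your plan follows the paper's architecture. That is: finitely supported approximation; adapted compactness from the entropy comparison of the conditional flows at $x$ and $x'$, where $\hat A$ and $\nabla V$ cancel, combined with Eder's criterion; strong convergence of $y\mapsto\E[\phi(X)\mid Y=y]$ from the uniform H\"older bounds; passage to the limit in the Fokker--Planck equation; mimicking. The trouble is the step you flag yourself. It is not closed by the tool you name, a Pinsker/Talagrand estimate on bounded sets plus uniform moment bounds.
\begin{itemize}
\item Truncating at radius $R$ gives $\Wc_1(\rho,\rho')\le 2R\|\rho-\rho'\|_{TV}+\int_{|y|>R}|y|\,d(\rho+\rho')$, hence $\Wc_1^2\lesssim R^2H+\tau(R)$ with $\tau(R)$ a tail term.
\item Gr\"onwall then yields $H_t\lesssim\bigl(H_0+d_\Xc(x,x')^2+T\tau(R)\bigr)e^{CR^2T}$.
\item After $H_0,d_\Xc(x,x')\to0$ you are left with $R^2T\tau(R)e^{CR^2T}+\tau(R)$. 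This vanishes along $R\to\infty$ only if $\tau(R)\le e^{-cR^2}$ with $c>CT$.
\end{itemize}
With polynomial tails no modulus of continuity in $x$ comes out. This is structural: a linear bound $\Wc_1^2\le K\,H(\cdot\mid\rho')$ valid for all first arguments forces a Gaussian moment on $\rho'$.

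\textbf{How the paper closes it.} The paper uses the weighted Pinsker inequality of Bolley--Villani, $\Wc_1^2(\rho,\rho')\le\varphi(\rho')H(\rho\mid\rho')$, with $\varphi$ governed by $\int e^{r|y|^2}d\rho'$. It keeps this $x'$-dependent constant pointwise and integrates the resulting Gr\"onwall factor against $\mu$ by Cauchy--Schwarz. The $\mu$-average of $\int e^{r|y|^2}\P^n_{t,x}(dy)$ equals $\int e^{r|y|^2}d\nu$ by the marginal constraint. This needs $\int e^{r_*|y|^2}\nu(dy)<\infty$, which the paper uses although it is not among the listed assumptions. You must add this ingredient.

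\textbf{A cleaner variant for your setup.} Alternatively, run Gr\"onwall directly for $h_t:=\E^\gamma[H(\P^n_{[0,t],X}\mid\P^n_{[0,t],X'})]$. There the averaged tail is exactly $\int_{|y|>R}|y|^2d\nu$, and the scheme closes on intervals of length $<r_*/C$. It can then be restarted, since only $h_{t_0}\to0$ as $\delta\to0$ is needed. The Gaussian-tail requirement on $\nu$ is the same.

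\textbf{Where your setup improves on the paper.}
\begin{itemize}
\item \emph{Initial kernels.} The paper picks $\pi_0^n$ by the gluing lemma and asserts that Assumption~\ref{ass:unif-continuity-pi0} gives uniform-in-$n$ entropy continuity of the kernels on $\mu_n$. Weak convergence does not give this, since adjacent atoms may carry mutually singular kernels. Your cell averages do give it, but not through the convergence $\E^\mu[H(\pi^{0,n}_{T_n(X)}\mid\pi^0_X)]\to0$ that you state. Instead, lift $\gamma\in\Pi(\mu^n,\mu^n)$ to $\tilde\gamma\in\Pi(\mu,\mu)$ by spreading each atom over its cell. Then $\E^{\tilde\gamma}[d_\Xc(X,X')]\le\E^\gamma[d_\Xc(X,X')]+2\E^\mu[d_\Xc(X,T_n(X))]$, and joint convexity of $H$ finishes the argument.
\item \emph{Fixed first marginal.} Pulling back to $\hat\P^n_t\in\Pi(\mu,\nu)$ keeps the first marginal fixed. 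This is what makes Lemma~\ref{prop:bootstrap} and bounded-measurable test functions in $x$ legitimate. The paper's bounded case instead appeals to continuity of $a$ in $y$, which is not assumed.
\end{itemize}

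\textbf{One repair in Step~3.} Proposition~\ref{prop:bootstrap cond exp} is stated for a fixed integrand $w$, whereas yours is $a(t,x,y,\hat\P^n_{t,x})$, which depends on $n$. Either replace it first by $a(t,x,y,\P_{t,x})$, at cost $C_a\int\Wc_1(\hat\P^n_{t,x},\P_{t,x})\,\mu(dx)\to0$. Or use the paper's simpler device $\E[h(X)\nabla u(Y)\cdot\hat A_n(s,Y)]=\E[w_n(s,Y)\nabla u(Y)\cdot a(s,X,Y,(\P^n_s)_X)]$, which needs only $w_n\to w_\star$ in $\L^1(\nu)$.
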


\begin{proof}
We argue by approximating the first marginal $\mu$ by finitely supported
probability measures.

Since $\Xc$ is a complete metric space, there exists a sequence
$(\mu_n)_{n\ge1}\subset \Pc_p(\Xc)$ such that each $\mu_n$ has finite support and
\[
    \Wc_p(\mu_n,\mu)\longrightarrow 0.
\]
By Lemma~\ref{prop:gluing lemma}, we may choose a sequence
$(\pi_0^n)_{n\ge1}$ such that
\[
    \pi_0^n\in \Pi(\mu_n,\nu),
    \qquad
    \Wc_p(\pi_0^n,\pi_0)\longrightarrow 0.
\]
For every $n$, Proposition~\ref{prop:exists:finitely-supported-measure-bounded-a}
provides a weak solution $(X^n,Y^n)$ to \eqreftag{eq:MV-a} with initial law
$\pi_0^n$. We denote
\[
    \P_t^n:=\Lc(X^n,Y_t^n),
    \qquad t\in[0,T].
\]

\medskip

\noindent
\emph{Step 1: Compactness of the sequence $(\P^n)_{n\ge1}$.} We prove compactness of the sequence
\[
    \P^n=(\P_t^n)_{t\in[0,T]}
\]
in
\(
    C\bigl([0,T],(\Pc_p(\Xc\times\R^d),\AW_p)\bigr).
\)
We use the Arzelà--Ascoli criterion. The proof has two parts: a uniform
modulus of continuity in time and compactness of the set of values
$(\P_t^n)_{n\ge1}$ for each fixed time $t$.

First, let $0\le s\le t\le T$. We use the coupling
\[
    \Lc\bigl((X^n,Y_t^n),(X^n,Y_s^n)\bigr)\in\Pi(\P_t^n,\P_s^n).
\]
Since the first coordinate is kept fixed, this is an adapted coupling, and thus
\[
\begin{aligned}
\AW_p(\P_t^n,\P_s^n)
&\le
\E\bigl[|Y_t^n-Y_s^n|^p\bigr]^{1/p}.
\end{aligned}
\]
Using the dynamics of $Y^n$, we get
\[
\begin{aligned}
&\AW_p(\P_t^n,\P_s^n) \\
&\le
\E\Bigg[
\Bigg|
\int_s^t
\Big(
\E\big[a(r,X^n,Y_r^n, (\P^n_{r})_{X^n} )\mid Y_r^n\big]
-
a(r,X^n,Y_r^n,(\P^n_{r})_{X^n} ))
-
\nabla V(Y_r^n)
\Big)\,dr
+
\sqrt{2}\,(B_t-B_s)
\Bigg|^p
\Bigg]^{1/p}
\\
&\le
C_{\mathrm{BDG}}|t-s|^{1/2}
+
\Big(
2\|a\|_\infty+\|\nabla V\|_{\L^p(\nu)}
\Big)|t-s|.
\end{aligned}
\]
For every $r\in[0,T]$, we have $\P_r^n\in\Pi(\mu_n,\nu)$ by
Proposition~\ref{Prop:mimick}. Hence the second marginal of $\P_r^n$ is
$\nu$, and therefore
\(
    \Lc(Y_r^n)=\nu.
\)
Consequently,
\[
    \E\bigl[|\nabla V(Y_r^n)|^p\bigr]
    =
    \int_{\R^d}|\nabla V(y)|^p\,\nu(dy),
\]
which is finite by Assumption~\ref{ass:Laws existence} and independent of $n$.
Thus the family $(\P^n)_{n\ge1}$ is equicontinuous in time, uniformly in $n$.

It remains to show compactness of the set of values at each fixed time.
By Proposition~\ref{Prop:mimick}, for every $t\in[0,T]$,
\[
    \P_t^n\in\Pi(\mu_n,\nu).
\]
Let
\[
    \Kc
    :=
    \overline{\bigcup_{n\ge1}\Pi(\mu_n,\nu)},
\]
where the closure is taken in $\Pc_p(\Xc\times\R^d)$ for the Wasserstein
topology. Since $\mu_n\to\mu$ in $\Wc_p$ and the second marginal is fixed equal
to $\nu$, the set $\Kc$ is compact in $\Wc_p$.

We now verify the additional adapted compactness criterion. By
Theorem~1.4 in \cite{eder2019compactness}, it is enough to show that
\[
\lim_{\delta\downarrow0}
\sup_{0\le t\le T}
\sup_{n\ge1}
\sup_{\substack{
    \pi\in\Pi(\mu_n,\mu_n)\\
    \E^\pi[d_\Xc(X,X')]\le\delta
}}
\E^\pi\Big[
    \Wc_1\bigl((\P^n_t)_{X},(\P^n_t)_{X'}\bigr)
\Big]
=
0.
\]
Here $(\P^n_{t})_{x}$ denotes the conditional law of $Y$ given $X=x$ under $\P_t^n$.

Fix $n\ge1$, and write
\[
    \operatorname{Supp}(\mu_n)=\{x_1^n,\dots,x_{N_n}^n\}.
\]
For simplicity of notation, we omit the superscript $n$ on the support points.
Since $\P_t^n\in\Pi(\mu_n,\nu)$ and
\[
    \int_{\R^d}e^{r_\ast |y|^2}\,\nu(dy)<\infty,
\]
we have, for every $1\le i\le N_n$,
\[
    \int_{\R^d} e^{r_\ast |y|^2}\,(\P^n_{t})_{x_i}(dy)
    <\infty.
\]
Applying the weighted Pinsker inequality, see Lemma~2.4 in
\cite{bolley2005weighted}, we obtain, for all $1\le i,j\le N_n$,
\[
    \Wc_1^2\bigl((\P^n_{t})_{x_i},(\P^n_{t})_{x_j}\bigr)
    \le
    \varphi_j\,
    H\bigl((\P^n_{t})_{x_i}\mid (\P^n_{t})_{x_j}\bigr),
\]
where $\varphi_j$ depends on the exponential moment of $(\P^n_{t})_{x_j}$.

By construction, the data-processing inequality and
Lemma~\ref{prop:entropy estimates} yield
\[
\begin{aligned}
H\bigl((\P^n_{t})_{x_i}\mid (\P^n_{t})_{x_j}\bigr)
&\le
H\bigl(\P^n_{0,x_i}\mid \P^n_{0,x_j}\bigr)
\\
&\quad+
\frac14
\int_0^t
\E^{(\P^n_{s})_{x_i}}
\Big[
\big|
a(s,x_i,Y,(\P^n_{s})_{x_i})
-
a(s,x_j,Y,(\P^n_{s})_{x_j})
\big|^2
\Big]
\,ds
\\
&\le
H\bigl(\P^n_{0,x_i}\mid \P^n_{0,x_j}\bigr)
+
C\,d_\Xc(x_i,x_j)^2
+
C\int_0^t
\Wc_1^2\bigl((\P^n_{s})_{x_i},(\P^n_{s})_{x_j}\bigr)
\,ds.
\end{aligned}
\]
Therefore,
\[
\begin{aligned}
\Wc_1^2\bigl((\P^n_{t})_{x_i},(\P^n_{t})_{x_j}\bigr)
&\le
\varphi_j
\Bigg(
H\bigl(\P^n_{0,x_i}\mid \P^n_{0,x_j}\bigr)
+
C\,d_\Xc(x_i,x_j)^2
+
C\int_0^t
\Wc_1^2\bigl((\P^n_{s})_{x_i},(\P^n_{s})_{x_j}\bigr)
\,ds
\Bigg).
\end{aligned}
\]
Gronwall's lemma gives
\[
\Wc_1\bigl((\P^n_{t})_{x_i},(\P^n_{t})_{x_j}\bigr)
\le
C_1 e^{C_2\varphi_j}\sqrt{\varphi_j}
\left(
\sqrt{
H\bigl(\P^n_{0,x_i}\mid \P^n_{0,x_j}\bigr)
}
+
d_\Xc(x_i,x_j)
\right).
\]
Absorbing $\sqrt{\varphi_j}$ into the exponential term, we may write
\[
\Wc_1\bigl((\P^n_{t})_{x_i},(\P^n_{t})_{x_j}\bigr)
\le
C_1\bigl(e^{C_2\varphi_j}+1\bigr)
\left(
\sqrt{
H\bigl(\P^n_{0,x_i}\mid \P^n_{0,x_j}\bigr)
}
+
d_\Xc(x_i,x_j)
\right).
\]

Let now $\pi\in\Pi(\mu_n,\mu_n)$. By Cauchy--Schwarz,
\[
\begin{aligned}
&\E^\pi\Big[
\Wc_1\bigl((\P^n_{t})_{X},\P^n_{t,X'}\bigr)
\Big] \le
C
\left(
\E^\pi\Big[
\bigl(e^{C_2\varphi(X')}+1\bigr)^2
\Big]
\right)^{1/2}
\\
&\hspace{3.8cm} \times
\left(
\E^\pi\Big[
H\bigl((\P^n_{0})_{X}\mid(\P^n_{0})_{X'}\bigr)
\Big]
+
\E^\pi\big[d_\Xc(X,X')^2\big]
\right)^{1/2}.
\end{aligned}
\]
Since the second marginal of $\pi$ is $\mu_n$, the first factor is
\[
\left(
\E^{\mu_n}\Big[
\bigl(e^{C_2\varphi(X)}+1\bigr)^2
\Big]
\right)^{1/2}.
\]
The uniform exponential moment bound inherited from $\nu$ yields
\[
\left(
\E^{\mu_n}\Big[
\bigl(e^{C_2\varphi(X)}+1\bigr)^2
\Big]
\right)^{1/2}
\le
C\int_{\R^d}e^{r_\ast |y|^2}\,\nu(dy),
\]
with a constant independent of $n$.

Hence the desired adapted compactness criterion follows once we know that
\[
\lim_{\delta\downarrow0}
\sup_{n\ge1}
\sup_{\substack{
    \pi\in\Pi(\mu_n,\mu_n)\\
    \E^\pi[d_\Xc(X,X')]\le\delta
}}
\E^\pi\Big[
H\bigl((\P^n_{0})_{X}\mid(\P^n_{0})_{X'}\bigr)
\Big]
=
0,
\]
and
\[
\lim_{\delta\downarrow0}
\sup_{n\ge1}
\sup_{\substack{
    \pi\in\Pi(\mu_n,\mu_n)\\
    \E^\pi[d_\Xc(X,X')]\le\delta
}}
\E^\pi\big[d_\Xc(X,X')^2\big]
=
0.
\]
The first convergence follows from
Assumption~\ref{ass:unif-continuity-pi0}, while the second follows from
Lemma~\ref{lemma:convergence_bis}.

We have therefore proved that the family $(\P^n)_{n\ge1}$ is equicontinuous and
takes its values in an adapted compact set. By the Arzelà--Ascoli theorem, up to
extracting a subsequence, there exists
\[
    \P^\star\in
    C\bigl([0,T],(\Pc_1(\Xc\times\R^d),\AW_p)\bigr)
\]
such that
\[
    \P^n\longrightarrow \P^\star
    \qquad\text{in }
    C\bigl([0,T],(\Pc_1(\Xc\times\R^d),\AW_p)\bigr).
\]

\medskip

\noindent
\emph{Step 2: Convergence of the conditional expectations.} Fix $t\in[0,T]$ and let $h:\Xc\to\R$ be bounded and continuous. Define
\[
    w_n(t,y)
    :=
    \E^{\P_t^n}[h(X)\mid Y=y],
\]
and
\[
    w_\star(t,y)
    :=
    \E^{\P_t^\star}[h(X)\mid Y=y].
\]
We claim that, for every $1<\alpha<\infty$,
\[
    w_n(t,\cdot)\longrightarrow w_\star(t,\cdot)
    \qquad\text{in }\L^\alpha(\nu).
\]

Let $g:\R^d\to\R$ be bounded and continuous. Since
$\P_t^n\to\P_t^\star$ weakly and since the second marginal is $\nu$ for all
$n$, we have
\[
\begin{aligned}
\E^\nu\big[g(Y)w_n(t,Y)\big]
&=
\E^{\P_t^n}\big[h(X)g(Y)\big]
\\
&\longrightarrow
\E^{\P_t^\star}\big[h(X)g(Y)\big]
=
\E^\nu\big[g(Y)w_\star(t,Y)\big].
\end{aligned}
\]
Moreover,
\[
    |w_n(t,\cdot)|\le \|h\|_\infty,
    \qquad
    |w_\star(t,\cdot)|\le \|h\|_\infty.
\]
By density, the above convergence extends to every
$g\in \L^{\alpha'}(\nu)$, where $\alpha'=\alpha/(\alpha-1)$. Thus
\[
    w_n(t,\cdot)\rightharpoonup w_\star(t,\cdot)
    \qquad\text{weakly in }\L^\alpha(\nu).
\]

We now upgrade this weak convergence to strong convergence. By
Proposition~\ref{prop:estim}, the family
\[
    \bigl(w_n(t,\cdot)\bigr)_{n\ge1}
\]
satisfies local Hölder estimates which are uniformly in $n$. Together with the
uniform bound
\[
    \|w_n(t,\cdot)\|_{\L^\infty(\nu)}
    \le
    \|h\|_\infty,
\]
this implies, by Arzelà--Ascoli on compact subsets of $\R^d$, that every
subsequence admits a further subsequence converging locally uniformly. The weak
limit has already been identified as $w_\star(t,\cdot)$; hence every locally
uniform limit must coincide with $w_\star(t,\cdot)$.

Therefore,
\[
    w_n(t,\cdot)\longrightarrow w_\star(t,\cdot)
    \qquad\text{locally uniformly on }\R^d.
\]
Since the sequence is uniformly bounded and $\nu$ is a probability measure, this
local uniform convergence implies convergence in $\L^\alpha(\nu)$: indeed, one
first restricts to a compact set carrying arbitrarily large $\nu$-mass, and then
uses the uniform bound on its complement. Hence
\[
    w_n(t,\cdot)\longrightarrow w_\star(t,\cdot)
    \qquad\text{in }\L^\alpha(\nu),
    \qquad 1<\alpha<\infty.
\]

\medskip

\noindent
\emph{Step 3: Passage to the limit in the Fokker--Planck equation.} We prove that $(\P_t^\star)_{t\in[0,T]}$ solves the weak formulation
\eqreftag{eq:Fokker-planck-a}. Let $u\in C_c^\infty(\R^d; \R)$ and let
$h:\Xc\to\R$ be bounded and continuous. For every $n$ and every $t\in[0,T]$,
\[
\begin{aligned}
\E^{\P_t^n}[h(X)u(Y)]
&=
\E^{\pi_0^n}[h(X)u(Y)]+\int_0^t \E^{\P^n_s}\left[h(X)\Delta u(Y) \right] ds
\\
&\quad+
\int_0^t
\E^{\P_s^n}\Big[
h(X)\nabla u(Y)\cdot
\Big(
\widehat A_n(s,Y)
-
a(s,X,Y, (\P^n_{s})_{X})
-
\nabla V(Y)
\Big)\Big] ds,
\end{aligned}
\]
where
\[
    \widehat A_n(s,Y)
    :=
    \E^{\P_s^n}
    \big[
        a(s,X,Y,(\P^n_{s})_{X})
        \mid Y
    \big].
\]

The convergence of the terms not involving conditional expectations $\hat A_n$ is direct.
Indeed, since $a$ is bounded, continuous, and Lipschitz in the measure argument,
and since
\[
    \P^n\to\P^\star
    \qquad\text{uniformly in time for the adapted topology},
\]
we obtain, for every fixed $s\in[0,T]$,
\[
    \E^{\P_s^n}
    \big[
        h(X)\nabla u(Y)\cdot a(s,X,Y,(\P^n_{s})_{X})
    \big]
    \longrightarrow
    \E^{\P_s^\star}
    \big[
        h(X)\nabla u(Y)\cdot a(s,X,Y,(\P^\star_{s})_{X})
    \big].
\]
Similarly,
\[
    \E^{\P_s^n}
    \big[
        h(X)\nabla u(Y)\cdot \nabla V(Y)
    \big]
    \longrightarrow
    \E^{\P_s^\star}
    \big[
        h(X)\nabla u(Y)\cdot \nabla V(Y)
    \big],
\]
and
\[
    \E^{\P_s^n}
    \big[
        h(X)\Delta u(Y)
    \big]
    \longrightarrow
    \E^{\P_s^\star}
    \big[
        h(X)\Delta u(Y)
    \big].
\]
The initial terms also converge:
\[
    \E^{\pi_0^n}[h(X)u(Y)]
    \longrightarrow
    \E^{\pi_0}[h(X)u(Y)].
\]
All these terms are uniformly integrable, in fact uniformly bounded except for
the term involving $\nabla V$, which is controlled by
$\|\nabla V\|_{\L^p(\nu)}$ and the fact that the second marginal is $\nu$.
Therefore dominated convergence allows us to pass to the limit after integration
in time.

It remains to handle the conditional expectation term. By the definition of
conditional expectation,
\[
\begin{aligned}
\E^{\P_s^n}
\big[
h(X)\nabla u(Y)\cdot \widehat A_n(s,Y)
\big]
&=
\E^{\P_s^n}
\big[
\E^{\P_s^n}[h(X)\mid Y]\,
\nabla u(Y)\cdot
\E^{\P_s^n}[a(s,X,Y,(\P^n_{s})_{X})\mid Y]
\big]
\\
&=
\E^{\P_s^n}
\big[
w_n(s,Y)\,
\nabla u(Y)\cdot
a(s,X,Y,(\P^n_{s})_{X})
\big],
\end{aligned}
\]
where
\[
    w_n(s,y):=\E^{\P_s^n}[h(X)\mid Y=y].
\]
The last equality follows again by conditioning with respect to $Y$.

By \emph{Step~2}, for every $s\in[0,T]$,
\[
    w_n(s,\cdot)
    \longrightarrow
    w_\star(s,\cdot)
    :=
    \E^{\P_s^\star}[h(X)\mid Y=\cdot]
    \qquad\text{in }\L^\alpha(\nu),
\]
for every $1<\alpha<\infty$, and in particular in $\L^1(\nu)$. Since
$a$ and $\nabla u$ are bounded, this implies
\[
\begin{aligned}
&\E^{\P_s^n}
\Big[
\big(w_n(s,Y)-w_\star(s,Y)\big)
\nabla u(Y)\cdot a(s,X,Y,(\P^n_{s})_{X})
\Big]
\longrightarrow 0.
\end{aligned}
\]
On the other hand, using the adapted convergence of $\P_s^n$ to $\P_s^\star$ and
the bounded continuity of the integrand, we have
\[
\begin{aligned}
&\E^{\P_s^n}
\big[
w_\star(s,Y)\nabla u(Y)\cdot a(s,X,Y,(\P^n_{s})_{X})
\big]
\\
&\qquad\longrightarrow
\E^{\P_s^\star}
\big[
w_\star(s,Y)\nabla u(Y)\cdot a(s,X,Y,(\P^\star_{s})_{X})
\big].
\end{aligned}
\]
Consequently,
\[
\begin{aligned}
&\E^{\P_s^n}
\big[
h(X)\nabla u(Y)\cdot \widehat A_n(s,Y)
\big]
\\
&\qquad\longrightarrow
\E^{\P_s^\star}
\big[
w_\star(s,Y)\nabla u(Y)\cdot a(s,X,Y,(\P^\star_{s})_{X})
\big]
\\
&\qquad=
\E^{\P_s^\star}
\big[
h(X)\nabla u(Y)\cdot
\E^{\P_s^\star}
[
a(s,X,Y,(\P^\star_{s})_{X})
\mid Y
]
\big].
\end{aligned}
\]
Again, boundedness gives a uniform dominating function, and we may pass to the
limit in the time integral.

Collecting the limits in the weak formulation, we conclude that
$(\P_t^\star)_{t\in[0,T]}$ solves \eqreftag{eq:Fokker-planck-a}. By
Proposition~\ref{Prop:mimick}, this yields a weak solution to
\eqreftag{eq:MV-a}.
\end{proof}

\subsubsection{Existence of a solution in the general case}

We now remove the boundedness assumption.

\begin{Theorem}\label{prop:existence-general-unbounded}
Under Assumption~\ref{ass:Laws existence} and Assumption~\ref{ass:a-existence-solution},
equation \eqreftag{eq:MV-a} admits a weak solution.
\end{Theorem}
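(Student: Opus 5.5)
We remove the boundedness of $a$ by truncation and a compactness argument in the adapted topology, following the structure of the proof of Proposition~\ref{prop:exists:general-measure-continuous-a}. For $k\ge1$, let $\chi_k:\R^d\to\R^d$ be the radial truncation $\chi_k(z):=z\min(1,k/|z|)$, which is $1$-Lipschitz, and set $a_k:=\chi_k\circ a$. Then $a_k$ is bounded, still satisfies Assumption~\ref{ass:a-existence-solution}~\ref{ass:a-existence-solution:lipschitz} with the same constant $C_a$, and satisfies~\ref{ass:a-existence-solution:exp} with the same $r^\ast$, since $|a_k|\le|a|$. Proposition~\ref{prop:exists:general-measure-continuous-a} therefore provides weak solutions $(X,Y^k)$ of (MV$(a_k,\pi_0)$). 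By Proposition~\ref{Prop:mimick}, their marginal flows $\P^k_t=\Lc(X,Y^k_t)$ lie in $\Pi(\mu,\nu)$ for all $t$.

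\emph{Step 1: uniform estimates.} Since $\P^k_t\in\Pi(\mu,\nu)$, the exponential integrability~\ref{ass:a-existence-solution:exp} gives bounds on $\int_0^T\E[|a_k(t,X,Y^k_t,(\P^k_t)_X)|^q]\,dt$ for every $q$, uniformly in $k$. The conditional projection $\hat A_k$ obeys the same bounds by Jensen's inequality, and $\E|\nabla V(Y^k_t)|^p=\|\nabla V\|^p_{L^p(\nu)}$. Hence the drifts $b_k$ are uniformly bounded in $L^p(dt\otimes\P^k_t)$ with $p>d+2$, and the constant $C_1(R,T,b_k)$ of Proposition~\ref{prop:estim} is bounded uniformly in $k$. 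To control the part of $C_1$ taken against $\mu\otimes\mathrm L$, I would use the local boundedness of $e^{V}$ on $B(0,R)$. I expect this to need a slightly refined reading of Proposition~\ref{prop:estim}, or the density bound itself. The proposition then yields uniform local $L^p$ density bounds and uniform local Hölder bounds for $w^k_\phi(t,\cdot)=\E[\phi(X)\mid Y^k_t=\cdot]$. For time equicontinuity, the BDG estimate used before now reads $\AW_p(\P^k_t,\P^k_s)\le C|t-s|^{1/2}+C|t-s|^{1-1/p}$, where the drift term is handled with Hölder's inequality and the uniform $L^p$ drift bound.

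\emph{Step 2: adapted compactness.} Values at fixed times lie in the $\Wc_p$-compact set $\Pi(\mu,\nu)$. For the Eder criterion (Theorem~1.4 in \cite{eder2019compactness}), I would repeat the weighted-Pinsker/Gronwall argument of Proposition~\ref{prop:exists:general-measure-continuous-a}, now for general $\mu$. Lemma~\ref{prop:entropy estimates} is applied conditionally on $X=x$ and $X=x'$; the $\hat A_k$ terms cancel because they depend only on $y$. The Lipschitz bound on $a_k$ then gives
\[
H\bigl((\P^k_t)_x\mid(\P^k_t)_{x'}\bigr)\le H(\pi^0_x\mid\pi^0_{x'})+C\,d_\Xc(x,x')^2+C\int_0^t\Wc_1^2\bigl((\P^k_s)_x,(\P^k_s)_{x'}\bigr)ds .
\]
Lemma~\ref{prop:entropy estimates} formally requires $b^2$ to be bounded, and it is not. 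I expect this to be overcome by localisation, or by applying the lemma to the bounded-$a$ approximations used inside the proof of Proposition~\ref{prop:exists:general-measure-continuous-a} and passing the resulting estimate to the limit via lower semicontinuity of entropy. The modulus obtained is uniform in $k$ by~\ref{ass:unif-continuity-pi0}, Lemma~\ref{lemma:convergence_bis}, and the uniform exponential moments inherited from $\nu$. Arzelà--Ascoli then gives a subsequence with $\P^k\to\P^\star$ in $C([0,T],(\Pc_p,\AW_p))$.

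\emph{Step 3: passage to the limit.} This is the main obstacle, because $a$ is unbounded and the bounded-convergence arguments of Step~3 in Proposition~\ref{prop:exists:general-measure-continuous-a} fail. For the non-conditional terms I would write $a_k-a=(\chi_k-\mathrm{id})\circ a$. This difference is supported on $\{|a|>k\}$, and its $L^1(\P^k_s)$-norm tends to $0$ uniformly in $k$ by~\ref{ass:a-existence-solution:exp}. Combined with the adapted convergence, the continuity of $a$ in $(x,\rho)$, and uniform integrability, this passes $\E^{\P^k_s}[h\nabla u\cdot a_k(\cdot,(\P^k_s)_X)]$ to the limit, since the integrand is continuous in the information-map variables and has uniformly bounded higher moments. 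For the term involving $\hat A_k$, I would again rewrite it as $\E^{\P^k_s}[w_k(s,Y)\nabla u(Y)\cdot a_k]$. Step~2 of the earlier proof, using the uniform Hölder bounds from Step~1, gives $w_k\to w_\star$ locally uniformly and in every $L^\alpha(\nu)$. A Hölder split with the uniform $L^q$ bounds on $a_k$ then handles the product. Alternatively, Proposition~\ref{prop:bootstrap cond exp} with $w=a$ gives directly $\hat A_k\to\hat A_\star$ in $L^{\alpha_2}(\nu)$; its hypotheses are supplied by the uniform density bounds from Proposition~\ref{prop:estim} and the exponential moments. Dominated convergence in time, via the uniform $L^p$ bounds, then shows that $\P^\star$ solves (FP$(a,\pi_0)$) with the integrability required by Definition~\ref{Def:weak-sol:FP-a}, and Proposition~\ref{Prop:mimick} yields a weak solution of (MV$(a,\pi_0)$).
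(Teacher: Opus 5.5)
Your strategy is the paper's. You truncate $a$, invoke Proposition~\ref{prop:exists:general-measure-continuous-a}, and obtain uniform adapted compactness from the weighted Pinsker/Gronwall estimate and the criterion of \cite{eder2019compactness}. You then pass to the limit in (FP$(a,\pi_0)$), using the exponential integrability for the truncation error and Proposition~\ref{prop:estim} for the conditional term.

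One step, however, fails as you justify it. In Step~3 you pass to the limit in $\E^{\P^k_s}[h(X)\nabla u(Y)\cdot a_k(s,X,Y,(\P^k_s)_X)]$. You do the same for $\E^{\P^k_s}[w_\star(s,Y)\nabla u(Y)\cdot a_k(s,X,Y,(\P^k_s)_X)]$ after replacing $w_k$ by $w_\star$. In both cases you assert that the integrand is continuous in the information-map variables $(x,y,\rho)$. Assumption~\ref{ass:a-existence-solution}\ref{ass:a-existence-solution:lipschitz} only gives Lipschitz continuity in $(x,\rho)$, and the coefficient is merely Borel in $(t,y)$. So weak or adapted convergence of $\mathcal{E}(\P^k_s)$ alone does not give convergence of these integrals.

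The repair is the one the paper uses, and it relies on bounds you already have from Step~1.
\begin{enumerate}
\item Replace $(\P^k_s)_X$ by $(\P^\star_s)_X$, using the Lipschitz bound and $\int_\Xc\Wc_1\bigl((\P^k_s)_x,(\P^\star_s)_x\bigr)\,\mu(dx)\to0$. This convergence follows from $\AW$-convergence because the first marginal is fixed at $\mu$.
\item The integrand is now a fixed Borel function of $(x,y)$. Truncate it using the uniform exponential moments.
\item Conclude with Lemma~\ref{prop:bootstrap} applied with $\lambda=\mu\otimes\mathrm{L}$. Its hypothesis is exactly the uniform local $\L^p$ density bound of Proposition~\ref{prop:estim}\ref{estim density}.
\end{enumerate}

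The same remark applies to your alternative via Proposition~\ref{prop:bootstrap cond exp}. There $w$ must be a single fixed function, so the replacement step has to come first. Hypothesis (iv) for merely bounded Borel $g$ again needs the density bounds to identify the limit.

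Finally, your concern in Step~2 about Lemma~\ref{prop:entropy estimates} is unnecessary. For the truncated coefficients $a_k$, the drift differs from the reference Langevin drift by a bounded term. The reference equation is well-posed by Assumption~\ref{ass:Laws existence}\ref{Well-posed:SDE Ref}. That is all the Girsanov argument behind the lemma requires, and it is how the paper applies it.
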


\begin{proof}
We now remove the boundedness assumption on the coefficient by a truncation
argument. For $n\ge1$, let
\[
    k_n(z):=\max(-n,\min(z,n)),
    \qquad z\in\R,
\]
and extend $k_n$ componentwise to $\R^d$. We define
\( 
    a_n:=k_n\circ a.
\)
Then each $a_n$ is bounded. Moreover, since $k_n$ is $1$-Lipschitz,
$a_n$ satisfies the same Lipschitz estimate as $a$ in the variables
$(x,\rho)$:
\[
    |a_n(t,x,y,\rho)-a_n(t,x',y,\rho')|
    \le
    C_a\Big(
        d_\Xc(x,x')+\Wc_1(\rho,\rho')
    \Big).
\]
By Proposition~\ref{prop:exists:general-measure-continuous-a}, for each $n$ there exists a
weak solution associated with the coefficient $a_n$. We denote its time
marginals by
\[
    \P_t^n:=\Lc(X^n,Y_t^n),
    \qquad t\in[0,T].
\]
We claim that the family $(\P^n)_{n\ge1}$ is relatively compact in
\[
    C\bigl([0,T],(\Pc_1(\Xc\times\R^d),\AW_1)\bigr).
\] The proof is nearly identical to that of Proposition~\ref{prop:exists:general-measure-continuous-a}; the only difference is that passing to the limit in the Fokker--Planck equations requires stronger integrability.
First, for every $t\in[0,T]$,
\(
    \P_t^n\in\Pi(\mu,\nu).
\)
Hence the first marginal is always $\mu$ and the second marginal is always
$\nu$. In particular, since $\mu\in\Pc_1(\Xc)$ and $\nu\in\Pc_1(\R^d)$, the set
\(
    \Pi(\mu,\nu)
\)
is compact in $\Pc_1(\Xc\times\R^d)$ for the $\Wc_1$ topology. Thus the only
point to check for adapted compactness is the uniform continuity, in the
conditioning variable $x$, of the conditional laws.

More precisely, by Theorem~1.4 in \cite{eder2019compactness}, it is enough to
prove
\[
\lim_{\delta\downarrow0}
\sup_{0\le t\le T}
\sup_{n\ge1}
\sup_{\substack{
    \pi\in\Pi(\mu,\mu)\\
    \E^\pi[d_\Xc(X,X')]\le\delta
}}
\E^\pi\Big[
    \Wc_1\bigl((\P^n_{t})_{X},\P^n_{t,X'}\bigr)
\Big]
=
0.
\]
Here $(\P^n_{t})_{X}$ denotes a regular conditional law of $Y$ given $X=x$ under
$\P_t^n$.

Fix $x,x'\in\Xc$. As before, the weighted Pinsker inequality gives
\[
    \Wc_1^2\bigl((\P^n_{t})_{x},(\P^n_{t})_{x'}\bigr)
    \le
    \varphi(x')\,
    H\bigl((\P^n_{t})_{x}\mid (\P^n_{t})_{x'} \bigr),
\]
where $\varphi(x')$ is controlled by the exponential moment of
$\P^n_{t,x'}$. This quantity is uniformly controlled because the second
marginal of $\P_t^n$ is $\nu$ and $\nu$ satisfies the required exponential
moment condition.

By the data-processing inequality and the entropy estimate of
Lemma~\ref{prop:entropy estimates}, applied to the equations driven by
$a_n$, we obtain
\[
\begin{aligned}
H\bigl((\P^n_{t})_{x}\mid(\P^n_{t})_{x'} \bigr)
&\le
H\bigl ( (\P^n_{0})_{x}\mid (\P^n_{0})_{x'}\bigr)
\\
&\quad+
\frac14
\int_0^t
\E^{ (\P^n_{s})_{x'}}
\Big[
\big|
a_n(s,x',Y,(\P^n_{s})_{x'})
-
a_n(s,x,Y,(\P^n_{s})_{x})
\big|^2
\Big]
\,ds.
\end{aligned}
\]
Since $a_n$ has the same Lipschitz constant as $a$, uniformly in $n$,
\[
\big|
a_n(s,x',Y,(\P^n_{s})_{x'})
-
a_n(s,x,Y,(\P^n_{s})_{x})
\big|
\le
C_a\Big(
    d_\Xc(x,x')
    +
    \Wc_1((\P^n_{s})_{x},(\P^n_{s})_{x'})
\Big).
\]
Therefore,
\[
\begin{aligned}
H\bigl((\P^n_{t})_{x}\mid\P^n_{t,x'}\bigr)
&\le
H\bigl((\P^n_{0})_{x}\mid (\P^n_{0})_{x'} \bigr)
+
C d_\Xc(x,x')^2
+
C\int_0^t
\Wc_1^2\bigl((\P^n_{s})_{x},(\P^n_{s})_{x'}\bigr)
\,ds,
\end{aligned}
\]
with a constant $C$ independent of $n$.

Combining this estimate with the weighted Pinsker inequality gives
\[
\begin{aligned}
\Wc_1^2\bigl((\P^n_{t})_{x},\P^n_{t,x'}\bigr)
&\le
\varphi(x')
\Bigg(
H\bigl((\P^n_{0})_{x}\mid(\P^n_{0})_{x'}\bigr)
+
C d_\Xc(x,x')^2
+
C\int_0^t
\Wc_1^2\bigl((\P^n_{s})_{x},(\P^n_{s})_{x'}\bigr)
\,ds
\Bigg).
\end{aligned}
\]
By Gronwall's lemma,
\[
\Wc_1\bigl((\P^n_{t})_{x},\P^n_{t,x'}\bigr)
\le
C_1\bigl(e^{C_2\varphi(x')}+1\bigr)
\left(
    \sqrt{H\bigl((\P^n_{0})_{x}\mid (\P^n_{0})_{x'}\bigr)}
    +
    d_\Xc(x,x')
\right),
\]
where the constants $C_1,C_2$ are independent of $n$.

Integrating this inequality with respect to
$\pi\in\Pi(\mu,\mu)$ and applying Cauchy--Schwarz yields
\[
\begin{aligned}
&\E^\pi\Big[
    \Wc_1\bigl((\P^n_{t})_{X},\P^n_{t,X'}\bigr)
\Big]
\le
C
\left(
\E^\mu\Big[
    \bigl(e^{C_2\varphi(X)}+1\bigr)^2
\Big]
\right)^{1/2}
\\
&\hspace{4cm} \times
\left(
\E^\pi\Big[
    H\bigl((\P^n_{0})_{X}\mid\P^n_{0,X'}\bigr)
\Big]
+
\E^\pi\big[d_\Xc(X,X')^2\big]
\right)^{1/2}.
\end{aligned}
\]
The first factor is finite and independent of $n$, again by the uniform
exponential moment inherited from the fixed second marginal $\nu$.

Thus the desired adapted compactness criterion follows from
\[
\lim_{\delta\downarrow0}
\sup_{\substack{
    \pi\in\Pi(\mu,\mu)\\
    \E^\pi[d_\Xc(X,X')]\le\delta
}}
\E^\pi\Big[
    H\bigl((\P^n_{0})_{X}\mid\P^n_{0,X'}\bigr)
\Big]
=
0,
\]
uniformly in $n$, together with
\[
\lim_{\delta\downarrow0}
\sup_{\substack{
    \pi\in\Pi(\mu,\mu)\\
    \E^\pi[d_\Xc(X,X')]\le\delta
}}
\E^\pi\big[d_\Xc(X,X')^2\big]
=
0.
\]
The first property is precisely the uniform entropy-continuity assumption on the
initial disintegration of $\pi_0$, while the second follows from
Lemma~\ref{lemma:convergence_bis}.

Consequently, $(\P^n)_{n\ge1}$ is relatively compact in
$
   \mathcal{C}:=C\bigl([0,T],(\Pc_1(\Xc\times\R^d),\AW_1)\bigr).
$
Hence, up to extracting a subsequence, there exists 
\(
    \P^\ast\in \mathcal{C}
\)
such that
\(
    \P^n \to \P^\ast
\).

It remains to show that $\P^\ast$ solves the Fokker--Planck equation associated
with $a$. Let $u:\Xc\to\R$ be bounded and continuous, and let
$h\in C_c^\infty(\R^d; \R)$. For each $n$, the weak formulation reads
\begin{align*}
&\E^{\P_t^n}[u(X)h(Y)]
=
\E^{\P_0^n}[u(X)h(Y)]
\\
& \ \ \ +
\int_0^t
\E^{\P_s^n} \!\Big[
u(X)\nabla h(Y)\cdot
\Big(
\hat A_n(s,Y)
-
a_n\bigl(s,X,Y,(\P_s^n)_X\bigr)
-
\nabla V(Y)
\Big)
+
u(X)\Delta h(Y)
\Big]\,
ds,
\end{align*}
where
\[
\hat A_n(s,Y)
=
\E^{\P_s^n} \Big[
a_n\bigl(s,X,Y,(\P_s^n)_X\bigr)\mid Y
\Big].
\]

The convergence of all linear terms follows from the convergence of $\P^n$ and
the uniform local \(\L^p\)-bounds on the densities given by
Proposition~\ref{prop:estim}. For the nonlinear drift term, we decompose
\begin{align*}
&a_n\bigl(s,X,Y,(\P_s^n)_X\bigr)-a\bigl(s,X,Y,(\P_s^\ast)_X\bigr)
\\
&=
\Big(
a_n\bigl(s,X,Y,(\P_s^n)_X\bigr)
-
a_n\bigl(s,X,Y,(\P_s^\ast)_X\bigr)
\Big)
+
\Big(
a_n\bigl(s,X,Y,(\P_s^\ast)_X\bigr)
-
a\bigl(s,X,Y,(\P_s^\ast)_X\bigr)
\Big).
\end{align*}
The first term converges to zero by the Lipschitz continuity of $a_n$ in the
measure argument and the convergence
\[
\int_{\Xc}
\Wc_1\big((\P_s^n)_x,(\P_s^\ast)_x\big)\,\mu(dx)\to0,
\]
while the second converges to zero by truncation, using
Assumption~\ref{ass:a-existence-solution:exp} and dominated convergence.

The conditional expectation term is handled similarly, relying on the strong
compactness of the family
\[
\E^{\P_s^n}[u(X)\mid Y]
\]
in \(\L^2(\nu)\), which follows from Proposition~\ref{prop:estim} together with
the same argument as in the bounded case.

Passing to the limit term by term, we conclude that $(\P_t^\ast)_{0\le t\le T}$
is a weak solution of \eqreftag{eq:Fokker-planck-a}. Proposition~\ref{Prop:mimick}
then yields a weak solution of \eqreftag{eq:MV-a}.
\end{proof}

We finally state uniqueness.

\begin{Theorem} \label{prop:uniqueness-general}
Under Assumption~\ref{ass:Laws existence} and Assumption~\ref{ass:a-existence-solution}, 
equation \eqreftag{eq:MV-a} admits at most one weak solution in law.
\end{Theorem}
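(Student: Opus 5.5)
To prove uniqueness in law for \eqreftag{eq:MV-a}, I would follow the strategy of the uniqueness proof in \cite{conforti2023projected}: a relative-entropy Gronwall argument on path space, adapted to the two conditioning directions. Let $(X^1,Y^1)$ and $(X^2,Y^2)$ be two weak solutions with the same initial law $\pi_0$. Write $\P^i_t:=\Lc(X^i,Y^i_t)\in\Pi(\mu,\nu)$, which holds by Proposition~\ref{Prop:mimick}. Write $\hat A^i$ for the Borel conditional-expectation fields from Lemma~\ref{Prop:measurable selection}, and $b^i(t,x,y):=\hat A^i(t,y)-a(t,x,y,(\P^i_t)_x)-\nabla V(y)$ for the frozen drifts. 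Each $\Lc(X^i,Y^i)$ solves a linear SDE with drift $b^i$, so it suffices to show $\P^1_t=\P^2_t$ for all $t$. Once the drifts agree, uniqueness in law of the resulting linear equation follows by Girsanov from the well-posedness of the reference Langevin dynamics \eqref{eqdef:refLangevin}, since the difference from the reference drift is controlled in $L^2$ by Assumption~\ref{ass:a-existence-solution}~\ref{ass:a-existence-solution:exp}.

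\textbf{Step 1: entropy inequality.} Apply Lemma~\ref{prop:entropy estimates}, after a localisation or truncation of $b^2$ to meet its boundedness hypothesis, to obtain
\[
H\bigl(\Lc(X^1,Y^1_{[0,t]})\,\big|\,\Lc(X^2,Y^2_{[0,t]})\bigr)\le \tfrac14\int_0^t \E\bigl[|b^1-b^2|^2(s,X^1,Y^1_s)\bigr]ds .
\]
The exponential integrability of Assumption~\ref{ass:a-existence-solution}~\ref{ass:a-existence-solution:exp} removes the truncation by monotone convergence. By the data-processing inequality, the left side dominates $h(s):=\E^{\mu}\bigl[H\bigl((\P^1_s)_X\,|\,(\P^2_s)_X\bigr)\bigr]=H(\P^1_s|\P^2_s)$ for every $s\le t$. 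Denote by $\mathcal H(t)$ the path-space entropy on the left.

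\textbf{Step 2: splitting the drift difference.} Decompose $b^1-b^2$ into two pieces.

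The \emph{$X$-conditional piece} is $a(s,x,y,(\P^1_s)_x)-a(s,x,y,(\P^2_s)_x)$. By the Lipschitz bound and the weighted Pinsker inequality of \cite{bolley2005weighted} (whose constant is uniform because the second marginal is $\nu$, with Gaussian moments), its square is at most $C\,\Wc_1^2\le C\,H((\P^1_s)_x|(\P^2_s)_x)$. Integrating over $\mu$ gives a contribution $\le C\,h(s)$.

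The \emph{$Y$-conditional piece} is $\hat A^1(s,y)-\hat A^2(s,y)$. Write it as
\[
\E^{\P^1_s}[a(s,X,y,(\P^1_s)_X)-a(s,X,y,(\P^2_s)_X)\mid Y=y]+\bigl(\E^{\P^1_s}-\E^{\P^2_s}\bigr)[a(s,X,y,(\P^2_s)_X)\mid Y=y].
\]
The first term is handled exactly as the $X$-conditional piece, using Jensen's inequality. For the second term, both couplings share the second marginal $\nu$. I would follow Lemma~3.4 of \cite{conforti2023projected}: use the variational (Donsker--Varadhan) formula together with the sub-Gaussian bound on $a$ from Assumption~\ref{ass:a-existence-solution}~\ref{ass:a-existence-solution:exp} to bound $\E^\nu|\E^{\P^1_s}[g\mid Y]-\E^{\P^2_s}[g\mid Y]|^2$ by $C\,H(\P^1_s|\P^2_s)$, possibly with a logarithmic loss $C\,h\log(1/h)$.

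\textbf{Step 3: closing Gronwall.} Combining the steps gives $\mathcal H(t)\le C\int_0^t \phi(\mathcal H(s))\,ds$ with $\phi(r)=r$ or $\phi(r)=r(1+\log^+ (1/r))$, both of which satisfy Osgood's condition. Since $\mathcal H(0)=0$, this forces $\mathcal H\equiv0$. Hence $\P^1=\P^2$ on path space, and the laws of $(X,Y)$ coincide on $\Xc\times C([0,T];\R^d)$ for every $T$.

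The main obstacle is the second term of the $Y$-conditional piece. Conditioning on $Y$ is not Lipschitz in relative entropy in general, and $a$ is unbounded. The integrand is evaluated under $\P^1_s$, whose conditional law $\Lc(X\mid Y)$ differs from that of $\P^2_s$, so one needs a transport-entropy estimate for conditional expectations sharing a marginal. That estimate is where the Gaussian integrability and the fixed marginal $\nu$ must be used carefully. If a clean linear bound fails, I would accept the $r\log(1/r)$ modulus and rely on Osgood's lemma.
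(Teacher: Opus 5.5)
Your plan follows the paper's proof closely. It uses a path-space entropy bound from Lemma~\ref{prop:entropy estimates} and the same split of $b^1-b^2$: the $X$-conditional Lipschitz piece, its Jensen image inside $\hat A^1-\hat A^2$, and the difference of conditional expectations of a fixed function under two couplings with common marginal $\nu$. It also uses weighted Pinsker inequalities, the chain rule $H(\P^1_s|\P^2_s)=\int H((\P^1_s)_x|(\P^2_s)_x)\,\mu(dx)=\int H(\P^{1,y}_s|\P^{2,y}_s)\,\nu(dy)$, and a Gronwall-type closing.

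\textbf{One claim is wrong as stated.} In the $X$-conditional piece, the weighted Pinsker constant in $\Wc_1^2((\P^1_s)_x,(\P^2_s)_x)\le C_x\,H((\P^1_s)_x|(\P^2_s)_x)$ grows with $\Phi(s,x):=\log\E^{(\P^2_s)_x}[e^{r_1|Y|^2}]$. The fixed second marginal only gives $\int_\Xc e^{\Phi(s,x)}\mu(dx)=\int_{\R^d}e^{r_1|y|^2}\nu(dy)<\infty$, not a bound uniform in $x$. You must do what the paper does in \eqref{eq:wasserstein-conditional-bound}:
\begin{itemize}
\item split on $\{\Phi(s,\cdot)>M\}$;
\item use Markov's inequality and Cauchy--Schwarz, noting that fourth moments of $\Wc_1$ are controlled by $\nu$;
\item obtain $C(1+M)h(s)+Ce^{-cM}$.
\end{itemize}
Optimizing $M$ gives the modulus $h(1+\log^+(1/h))$, so this piece carries the same logarithmic loss as the $Y$-conditional one. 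Your Step 3 already allows for that, so the conclusion survives.

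\textbf{Where your route differs: the closing.} The paper keeps the truncation levels $M,m$ fixed and applies linear Gronwall to \eqref{eq:gronwall-pre-uniqueness}. This gives a bound of order $e^{CT(M+m)}(e^{-M/4}+e^{-rm/4})$; it then sends $M,m\to\infty$ for $T$ small and restarts on short intervals. You instead optimize the truncation at each time and invoke Osgood, which covers $[0,T]$ in one step. Assumption~\ref{ass:a-existence-solution}~\ref{ass:a-existence-solution:exp} only controls exponential moments integrated in time. Pointwise optimization therefore leaves a factor $1+\log^+K(s)$ with $K\in L^1(0,T)$. You need Osgood in the weighted form $\mathcal H(t)\le\int_0^t k(s)\phi(\mathcal H(s))\,ds$ with $k\in L^1$, which still works.

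\textbf{Where your route differs: Step 1.} Truncating the frozen drift $b^2$ works, but not by monotone convergence, since the truncated laws $\P^{2,n}$ are different measures. Note that the same lemma gives $H(\P^2|\P^{2,n})=\frac14\int_0^T\E^{\P^2}|b^2-b^2_n|^2\,dt\to0$. Then conclude by lower semicontinuity of $H$, with dominated convergence on the right-hand side. This is cleaner than the paper's route. The paper approximates $\P$ by the bounded-coefficient solutions of the nonlinear equation from the existence proof and asserts that they converge to the given $\P$; that assertion itself presupposes uniqueness.
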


\begin{proof}
We now prove uniqueness in law. The proof follows the strategy of
\cite{conforti2023projected}. Let $\P$ and $\Q$ be two weak solutions of
\eqreftag{eq:MV-a} with the same initial law $\pi_0$. By
Proposition~\ref{Prop:mimick}, for every $t\ge0$,
\[
    \P_t:=\Lc_\P(X,Y_t)\in\Pi(\mu,\nu),
    \qquad
    \Q_t:=\Lc_\Q(X,Y_t)\in\Pi(\mu,\nu).
\]
We denote by $(\P_{t})_{x}$ and $(\Q_{t})_{x}$ regular conditional laws of $Y_t$ given
$X=x$ under $\P_t$ and $\Q_t$, respectively, and we set
\[
    a_\P(t,x,y):=a(t,x,y,(\P_{t})_{x}),
    \qquad
    a_\Q(t,x,y):=a(t,x,y,(\Q_{t})_{x}).
\]
We also write
\[
    \widehat A_\P(t,y)
    :=
    \E^\P\big[a_\P(t,X,Y_t)\mid Y_t=y\big],
    \qquad
    \widehat A_\Q(t,y)
    :=
    \E^\Q\big[a_\Q(t,X,Y_t)\mid Y_t=y\big].
\]
With this notation, under $\P$ and $\Q$, the corresponding drifts are given by
\[
    b_\P(t,X,Y_t)
    =
    \widehat A_\P(t,Y_t)-a_\P(t,X,Y_t)-\nabla V(Y_t),
\]
and
\[
    b_\Q(t,X,Y_t)
    =
    \widehat A_\Q(t,Y_t)-a_\Q(t,X,Y_t)-\nabla V(Y_t).
\]

We first establish the entropy estimate
\begin{equation} \label{eq:entropy-estimate-uniqueness}
H(\Q_{[0,T]}\mid \P_{[0,T]})
\le
\frac14
\int_0^T
\E^\Q\left[
\left|
\big(a_\Q-a_\P\big)(t,X,Y_t)
-
\big(\widehat A_\Q-\widehat A_\P\big)(t,Y_t)
\right|^2
\right]
\,dt.
\end{equation}
Indeed, when the coefficients are bounded, this follows directly from
Lemma~\ref{prop:entropy estimates} applied to the two weak solutions,
since the diffusion coefficients are both equal to $\sqrt{2}$. In the present
case, the coefficient $a$ is not necessarily bounded. We therefore argue by
truncation.

Let $(a_n)_{n\ge1}$ be a sequence of bounded truncations of $a$ such that
$a_n\to a$ pointwise and
\[
    |a_n|\le |a|,
    \qquad
    |a_n(t,x,y,\rho)-a_n(t,x',y,\rho')|
    \le
    C_a\big(d_\Xc(x,x')+\Wc_1(\rho,\rho')\big).
\]
Let $\P^n$ denote the approximating solution associated with the bounded
coefficient $a_n$, constructed in the existence proof. Then
$\P^n\to\P$ in the adapted topology, and in particular the time marginals
converge weakly with the required conditional stability. Applying
Lemma~\ref{prop:entropy estimates} to $\Q$ and $\P^n$ gives
\[
\begin{aligned}
H(\Q_{[0,T]}\mid \P^n_{[0,T]})
&\le
\frac14
\int_0^T
\E^\Q\Big[
\Big|
a_\Q(t,X,Y_t)
-
a_{\P^n}^n(t,X,Y_t)
-
\widehat A_\Q(t,Y_t)
+
\widehat A_{\P^n}^n(t,Y_t)
\Big|^2
\Big]\,dt,
\end{aligned}
\]
where
\[
    a_{\P^n}^n(t,x,y):=a_n(t,x,y,(\P^n_{t})_{x}),
    \qquad
    \widehat A_{\P^n}^n(t,y)
    :=
    \E^{\P^n}\big[a_{\P^n}^n(t,X,Y_t)\mid Y_t=y\big].
\]
By the lower semicontinuity of relative entropy,
\[
    H(\Q_{[0,T]}\mid \P_{[0,T]})
    \le
    \liminf_{n\to\infty}
    H(\Q_{[0,T]}\mid \P^n_{[0,T]}).
\]
Moreover, by the Lipschitz continuity of $a$ in the conditional law, the
adapted convergence $\P^n\to\P$, and the convergence $a_n\to a$ under the
exponential integrability assumption, we have
\[
\int_0^T
\E^\Q\big[
|a_{\P^n}^n(t,X,Y_t)-a_\P(t,X,Y_t)|^2
\big]
\,dt
\longrightarrow 0.
\]
Similarly, using the conditional stability result of
Proposition~\ref{prop:bootstrap cond exp}, together with the same truncation
argument, we obtain
\[
\int_0^T
\E^\Q\big[
|\widehat A_{\P^n}^n(t,Y_t)-\widehat A_\P(t,Y_t)|^2
\big]
\,dt
\longrightarrow 0.
\]
Passing to the limit in the previous entropy inequality yields
\eqref{eq:entropy-estimate-uniqueness}.

We now estimate the right-hand side of
\eqref{eq:entropy-estimate-uniqueness}. By the elementary inequality
$|u-v|^2\le 2|u|^2+2|v|^2$, we have
\[
\begin{aligned}
H(\Q_{[0,T]}\mid \P_{[0,T]})
&\le
\frac12
\int_0^T
\E^\Q\big[
|a_\Q(t,X,Y_t)-a_\P(t,X,Y_t)|^2
\big]
\,dt
\\
&\quad+
\frac12
\int_0^T
\E^\Q\big[
|\widehat A_\Q(t,Y_t)-\widehat A_\P(t,Y_t)|^2
\big]
\,dt.
\end{aligned}
\]
The first term is controlled by the Lipschitz continuity of $a$ with respect to
the conditional law:
\[
|a_\Q(t,x,y)-a_\P(t,x,y)|
\le
C_a\,\Wc_1((\Q_{t})_{x},(\P_{t})_{x}).
\]
Hence
\begin{equation} \label{eq:first-term-uniqueness}
\E^\Q\big[
|a_\Q(t,X,Y_t)-a_\P(t,X,Y_t)|^2
\big]
\le
C
\int_\Xc
\Wc_1^2((\Q_{t})_{x},(\P_{t})_{x})\,\mu(dx).
\end{equation}

It remains to control the difference of conditional expectations. We split
\[
\widehat A_\Q(t,Y_t)-\widehat A_\P(t,Y_t)
=
\Big(
\E^\Q[a_\Q(t,X,Y_t)\mid Y_t]
-
\E^\Q[a_\P(t,X,Y_t)\mid Y_t]
\Big)
\]
\[
\hspace{3cm}
+
\Big(
\E^\Q[a_\P(t,X,Y_t)\mid Y_t]
-
\E^\P[a_\P(t,X,Y_t)\mid Y_t]
\Big).
\]
By Jensen's inequality and the Lipschitz continuity of $a$,
\[
\E^\Q\left[
\left|
\E^\Q[a_\Q(t,X,Y_t)-a_\P(t,X,Y_t)\mid Y_t]
\right|^2
\right]
\le
C
\int_\Xc
\Wc_1^2((\Q_{t})_{x},(\P_{t})_{x})\,\mu(dx).
\]
Therefore the only remaining term is
\[
\E^\Q\left[
\left|
\E^\Q[a_\P(t,X,Y_t)\mid Y_t]
-
\E^\P[a_\P(t,X,Y_t)\mid Y_t]
\right|^2
\right].
\]

We now use the weighted Pinsker inequality. Fix $r>0$. For $\nu$-a.e. $y$,
let $\P_t^y$ and $\Q_t^y$ denote regular conditional laws of $X$ given
$Y_t=y$ under $\P_t$ and $\Q_t$, respectively. Applying the weighted Pinsker
inequality to the function $x\mapsto a_\P(t,x,y)$ yields, for every $m>0$,
\begin{equation} \label{eq:conditional-expectation-stability}
\begin{aligned}
&\E^\nu\left[
\left|
\E^\Q[a_\P(t,X,Y_t)\mid Y_t]
-
\E^\P[a_\P(t,X,Y_t)\mid Y_t]
\right|^2
\right]
\\
&\quad\le
\left(\frac{2}{r}+m\right)H(\Q_t\mid\P_t)
+
C_t^{a_\P}
e^{-rm/4}
\left(
\E^\P\big[e^{4r|a_\P(t,X,Y_t)|^2}\big]
\right)^{1/2},
\end{aligned}
\end{equation}
where
\[
C_t^{a_\P}
:=
2\left(\E^\Q[|a_\P(t,X,Y_t)|^4]\right)^{1/2}
+
2\left(\E^\P[|a_\P(t,X,Y_t)|^4]\right)^{1/2}.
\]
This is exactly the conditional stability estimate used in
\cite{conforti2023projected}: it follows by applying the weighted Pinsker
inequality conditionally on $Y_t=y$, then splitting the set on which the
corresponding exponential moment is larger than $m$, and finally using Markov's
inequality, Jensen's inequality, and Cauchy--Schwarz.

We next estimate the term involving
$\Wc_1((\Q_{t})_{x},(\P_{t})_{x})$. Fix $r_1>0$ and define
\[
\Phi_{r_1}(t,x)
:=
\log\left(
\E^{(\P_{t})_{x}}\big[e^{r_1|Y|^2}\big]
\right).
\]
By the weighted transport-entropy inequality of Corollary $2.4$ of \cite{bolley2005weighted}, for every $M>0$,
\[
\Wc_1^2((\Q_{t})_{x},(\P_{t})_{x})
\le
C\left(M+\frac{2}{r_1}\right)
H((\Q_{t})_{x}\mid(\P_{t})_{x})
+
C\,\Wc_1^2((\Q_{t})_{x},(\P_{t})_{x})\mathbf 1_{\{\Phi_{r_1}(t,x)>M\}}.
\]
Integrating with respect to $\mu(dx)$ gives
\[
\begin{aligned}
\int_\Xc
\Wc_1^2((\Q_{t})_{x},(\P_{t})_{x})\,\mu(dx)
&\le
C\left(M+\frac{2}{r_1}\right)
\int_\Xc H((\Q_{t})_{x}\mid(\P_{t})_{x})\,\mu(dx)
\\
&\quad+
C
\int_\Xc
\Wc_1^2((\Q_{t})_{x},(\P_{t})_{x})
\mathbf 1_{\{\Phi_{r_1}(t,x)>M\}}
\,\mu(dx).
\end{aligned}
\]
By the chain rule for relative entropy,
\[
\int_\Xc H((\Q_{t})_{x}\mid(\P_{t})_{x})\,\mu(dx)
=
H(\Q_t\mid\P_t),
\]
because $\P_t$ and $\Q_t$ have the same first marginal $\mu$. Moreover, by
Cauchy--Schwarz and the fact that both second marginals are equal to $\nu$,
\[
\int_\Xc
\Wc_1^2((\Q_{t})_{x},(\P_{t})_{x})
\mathbf 1_{\{\Phi_{r_1}(t,x)>M\}}
\,\mu(dx)
\le
C
\mu(\Phi_{r_1}(t,\cdot)>M)^{1/2}.
\]
Finally, by Markov's inequality and the definition of $\Phi_{r_1}$,
\[
\mu(\Phi_{r_1}(t,\cdot)>M)
\le
e^{-M/2}
\int_\Xc
\E^{(\P_{t})_{x}}\big[e^{r_1|Y|^2}\big]
\,\mu(dx)
=
e^{-M/2}
\int_{\R^d}e^{r_1|y|^2}\,\nu(dy).
\]
Thus
\begin{equation} \label{eq:wasserstein-conditional-bound}
\int_\Xc
\Wc_1^2((\Q_{t})_{x},(\P_{t})_{x})\,\mu(dx)
\le
C\left(M+\frac{2}{r_1}\right)H(\Q_t\mid\P_t)
+
C e^{-M/4}
\left(
\int_{\R^d}e^{r_1|y|^2}\,\nu(dy)
\right)^{1/2}.
\end{equation}

Combining \eqref{eq:entropy-estimate-uniqueness},
\eqref{eq:first-term-uniqueness},
\eqref{eq:conditional-expectation-stability}, and
\eqref{eq:wasserstein-conditional-bound}, and using the data-processing
inequality
\[
H(\Q_t\mid\P_t)
\le
H(\Q_{[0,t]}\mid\P_{[0,t]}),
\]
we obtain
\begin{equation} \label{eq:gronwall-pre-uniqueness}
\begin{aligned}
H(\Q_{[0,T]}\mid\P_{[0,T]})
&\le
C
\left(
M+m+\frac{2}{r_1}+\frac{2}{r}
\right)
\int_0^T
H(\Q_{[0,t]}\mid\P_{[0,t]})\,dt
\\
&\quad+
C e^{-M/4}
T
\left(
\int_{\R^d}e^{r_1|y|^2}\,\nu(dy)
\right)^{1/2}
\\
&\quad+
C e^{-rm/4}
\int_0^T
C_t^{a_\P}
\left(
\E^\P\big[e^{4r|a_\P(t,X,Y_t)|^2}\big]
\right)^{1/2}
\,dt.
\end{aligned}
\end{equation}
By Assumption~\ref{ass:a-existence-solution:exp}, choosing $r>0$ small enough
ensures that
\[
\int_0^T
\E^\P\big[e^{8r|a_\P(t,X,Y_t)|^2}\big]
\,dt
<\infty.
\]
Moreover, the same exponential integrability implies
\(
\int_0^T |C_t^{a_\P}|^2\,dt<\infty.
\)
Indeed, by the Lipschitz property of $a$,
\[
|a_\P(t,X,Y_t)|
\le
|a_\Q(t,X,Y_t)|
+
C_a\Wc_1(\Q_{t,X},\P_{t,X}),
\]
and the last Wasserstein term has polynomial moments controlled by the fixed
second marginal $\nu$. Therefore all the integrability terms in
\eqref{eq:gronwall-pre-uniqueness} are finite.

Applying Grönwall's lemma to \eqref{eq:gronwall-pre-uniqueness}, we get
\[
\begin{aligned}
H(\Q_{[0,T]}\mid\P_{[0,T]})
&\le
C
\exp\left(
C T
\left(
M+m+\frac{2}{r_1}+\frac{2}{r}
\right)
\right)
\\
&\quad\times
\Bigg[
e^{-M/4}
+
e^{-rm/4}
\int_0^T
C_t^{a_\P}
\left(
\E^\P\big[e^{4r|a_\P(t,X,Y_t)|^2}\big]
\right)^{1/2}
\,dt
\Bigg].
\end{aligned}
\]
We now choose $r>0$ and $r_1>0$ small enough so that all exponential moments
above are finite. Then, for $T>0$ sufficiently small, we may let
$M\to\infty$ and $m\to\infty$ in such a way that the exponential remainders
dominate the Grönwall factor. We obtain
\(
H(\Q_{[0,T]}\mid\P_{[0,T]})=0.
\)
Therefore
\(
\Q_{[0,T]}=\P_{[0,T]}
\)
for all sufficiently small $T$.

Finally, since the constants in the previous estimates depend only on the
integrability bounds of the coefficient and on the fixed marginals $\mu$ and
$\nu$, the same argument can be restarted on any interval
$[T_0,T_0+\eta]$, with $\eta>0$ small enough. Iterating over finitely many
intervals yields
\(
\Q_{[0,T]}=\P_{[0,T]}
\)
for every $T>0$. This proves uniqueness in law.

\end{proof}






\section{Convergence}\label{sec:convergence}

Let us take $\pi_0= \mu \otimes \nu$, and $\pi_t$ to be the solution of the SDE \eqref{eq:SDE}. In this section, we prove that $\pi_t$ converges to the unique minimizer $\pi^*$ of \eqref{eq:WOT} in $(\Pc_2(\Xc \times \R^d), \AW_2)$. Without loss of generality, we assume $\varepsilon =1$ throughout this section.

According to the projected gradient flow \eqref{eq:gradient_flow} and the well-posedness of \eqref{eq:SDE}, by direct computation
\begin{align*} 
\frac{d}{dt}J(\pi_t) &= \int  \delta_m J(\pi_t) (x,y) \,  \partial_t \pi_t(dx,dy)  = - \langle P^{\pi_t}(\nabla^{\rm ad}_{\pi_t} \delta_m J(\pi_t) ), P^{\pi_t}(\nabla^{\rm ad}_{\pi_t} \delta_m J(\pi_t) ) \rangle_{\pi_t} \notag \\
&= - \mathbb E^{\pi_t} \left[\left| \partial_y \delta_m J(\pi_t)(X,Y_t)- \mathbb E^{\pi_t}_{Y}[\partial_y \delta_m J(\pi_t)(X,Y_t) ]\right|^2 \right] \notag \\
&= - \mathbb E^{\pi_t} \left[  \left|\hat c(X,Y_t,(\pi_t)_{X})-\tilde c^{\pi_t}(Y_t) +\partial_y \log \left( \frac{d\pi_t}{d\mu \otimes \nu}(X,Y_t)\right)\right|^2\right].
\end{align*}
We denote the quantity in the bottom line by $-\tilde I(\pi_t)$. From this equality, $t \mapsto J(\pi_t)$ is decreasing. 

For the proof of convergence, let us define $\tilde I$ on the space of filtered processes, which is the completion $(\Pc(\Xc \times \R^d), \AW_2)$. For $\X \in \rm {FP}_2$, denote $\pi^{\mathbb X}=\mathcal{L}^{\mathbb P^{\mathbb X}}(X,Y)$, and define
\begin{align} \label{eq:WOTfisher} \textstyle
    \tilde I(\mathbb X):= \mathbb E^{\mathbb P} \left[\left|\hat c\left(X, Y,\mathcal{L}_{\mathcal{F}_1}(Y) \right)-\tilde c^{\pi^{\mathbb X}}( Y)+\partial_y \log \left(\frac{d \mathcal{L}_{\mathcal{F}_1}(Y)}{d\nu}(Y) \right) \right|^2 \right],
\end{align}
where we sometimes omit the superscript $\mathbb X$ in the notation where it is clear from the context. We need the following two crucial results for the proof of convergence. 

\begin{Proposition} \label{prop:lsc}
    Suppose $(\X_n)_{n \in \mathbb N} \in \rm FP_2(\mu,\nu)$. If $\AW_2(\X_n, \mathbb X) \to 0$ for some $\mathbb X \in \rm FP_2(\mu,\nu)$, then we have 
    \begin{align*}
       \liminf_{n\to \infty} \tilde I(\X_n) \geq \tilde I(\mathbb X). 
    \end{align*}
In particular, $t \mapsto \tilde I(\pi_t)$ is l.s.c. where $(\pi_t)_{t \geq 0}$ is the solution to \eqref{eq:SDE}. 
\end{Proposition}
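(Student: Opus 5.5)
The plan is to exploit the fact that the vector field inside $\tilde I$ is the $L^2$-projection onto the orthogonal complement of $\sigma(Y)$-measurable fields, and to split $\tilde I$ into a lower semicontinuous part and a continuous part. For $\X\in\mathrm{FP}_2(\mu,\nu)$ write $\rho:=\Lc(Y\mid\Fc_1)$, $u:=\hat c(X,Y,\rho)$ and $s:=\partial_y\log\frac{d\rho}{d\nu}(Y)$. We may assume $\liminf_n\tilde I(\X_n)<\infty$ and pass to a subsequence along which $\sup_n\tilde I(\X_n)<\infty$.

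\emph{Step 1 (rewriting $\tilde I$).} Since $\Lc(Y)=\nu$, the computation behind \eqref{eq:conditiondensity}, done on the filtered process, gives $\E[s\mid Y]=0$ whenever $s\in L^2$. Hence the integrand in \eqref{eq:WOTfisher} equals $w-\E[w\mid Y]$ with $w:=u+s$, and therefore
\[
\tilde I(\X)=\E\big[|u+s|^2\big]-\E\big[|\E[u\mid Y]|^2\big].
\]
It then suffices to prove two things: the first term is lower semicontinuous along $\X_n\to\X$, and the second term is continuous along sequences with bounded $\tilde I$.

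\emph{Step 2 (lsc of $\E|u+s|^2$).} I will use the dual representation
\[
\E|u+s|^2=\sup_{\varphi}\E\Big[2u\cdot\varphi-2\,\nabla_y\!\cdot\varphi+2\nabla V(Y)\cdot\varphi-|\varphi|^2\Big],
\]
where $\varphi=\varphi(x,y,\rho)$ ranges over bounded continuous fields with $\varphi(x,\cdot,\rho)\in C_c^\infty$. This follows by integrating by parts against $\rho(dy)=\frac{d\rho}{d\nu}\nu(dy)$ and completing the square. For fixed $\varphi$ the integrand is a continuous function of $(x,y,\rho)$. Its convergence under the information maps $\mathcal E(\X_n)\to\mathcal E(\X)$ then follows from the continuity of $\hat c$, from the sub-Gaussian bound of Assumption~\ref{ass:cost}(ii), which gives uniform integrability of $u\cdot\varphi$, and from $\nabla V\in L^p(\nu)$ with fixed $Y$-marginal. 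A supremum of continuous functionals is lsc. The same representation also shows that the limit $\X$ has finite Fisher-type quantity, so $s\in L^2$ there and Step 1 is legitimate for $\X$ as well.

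\emph{Step 3 (continuity of $\E|\E[u\mid Y]|^2$) — the main obstacle.} This term involves conditioning on $Y$, which is not continuous in $\AW_2$. I would proceed as follows. First, bound $\|\tilde c^{\pi^{\X_n}}\|_{L^2(\nu)}\le\|u_n\|_{L^2}$, which is uniformly bounded by (ii). Then $\sup_n\E|s_n|^2<\infty$ from bounded $\tilde I$, so the relative Fisher informations of the kernels $\rho_n$ are uniformly bounded. With $V\in C^2$, this yields uniform local $W^{1,1}$, hence local $L^{\alpha_1}$ (some $\alpha_1>1$, by Sobolev embedding), bounds on the densities $f_n=d\pi_n/d(\mu\otimes\nu)$. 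Next, check hypothesis (iv) of Proposition~\ref{prop:bootstrap cond exp}. This means strong convergence of $\E^{\pi_n}[g(X)\mid Y]$ in $L^{\alpha_2}(\nu)$: weak convergence holds exactly as in Step 2 of the proof of Proposition~\ref{prop:exists:general-measure-continuous-a}, and I would upgrade it to strong convergence through compactness coming from the gradient bound. This is the delicate point, and I would try to derive an $L^1$ bound on $\nabla_y\E[g(X)\mid Y]e^{-V}$ from the Fisher bound by Cauchy--Schwarz. Then apply Proposition~\ref{prop:bootstrap cond exp} to $w=u$, choosing $\alpha_3$ large via the sub-Gaussian bound. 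To handle the dependence of $u_n$ on $\rho_n$, replace $u_n$ by $u$ evaluated at the limit kernel and control the error by $\AW$-convergence of the information maps together with uniform integrability. This gives $\E[u_n\mid Y]\to\E[u\mid Y]$ in $L^2(\nu)$.

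Combining Steps 2 and 3 yields the liminf inequality. For the final claim, the path $t\mapsto\pi_t$ is continuous in $\AW_2$ by the time-modulus estimate of Step 1 in the proof of Proposition~\ref{prop:exists:general-measure-continuous-a}, and $\pi_t\in\Pi(\mu,\nu)$ for every $t$. Applying the first part along $t_n\to t$ then gives lower semicontinuity of $t\mapsto\tilde I(\pi_t)$.
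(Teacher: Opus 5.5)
Your Steps 1 and 2 are sound and somewhat tidier than the paper's $A_n+B_n+C_n$ split. The orthogonal decomposition $\tilde I=\E|u+s|^2-\E\bigl|\E[u\mid Y]\bigr|^2$ reads $\tilde c^{\pi^{\X}}(Y)$ as $\E[u\mid Y]$, which agrees with the paper's definition for self-aware $\X$. Together with the triangle-inequality bound $\sup_n\E|s_n|^2<\infty$ and the dual formula for $\E|u+s|^2$, it replaces the paper's integration by parts against $\nabla_y\!\cdot\hat c$.

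\textbf{The gap} is in Step 3, where you ``replace $u_n$ by $u$ evaluated at the limit kernel'' in order to apply Proposition~\ref{prop:bootstrap cond exp}. That proposition needs a \emph{fixed} Borel function $w(x,y)$ on $\Xc\times\R^d$ and a fixed first marginal.
\begin{itemize}
\item For a general limit $\X\in\mathrm{FP}_2(\mu,\nu)$, the limit kernel $\Lc(Y\mid\Fc_1^{\X})$ is an $\Fc_1$-measurable random measure, not a function of $X$, so no such $w$ exists.
\item When $\X$ is not self-aware, $\mathcal{R}(\X)=\mu(dx)\,\kappa_x(dr)$ is not carried by a graph. For any fixed $w$, the error $\E^{\P_n}|u_n-w(X,Y)|^2=\int|\hat c(x,y,r)-w(x,y)|^2\,\mathcal{E}(\X_n)(dx,dy,dr)$ does not vanish. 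For continuous $w$ it converges to $\int|\hat c(x,y,r)-w(x,y)|^2\,r(dy)\,\mathcal{R}(\X)(dx,dr)$, which is strictly positive whenever $\hat c$ genuinely depends on $\rho$.
\end{itemize}
So the replacement is legitimate only when $\X$ is self-aware; a Lusin argument then gives $\rho_n\approx\pi_X$ in probability.

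This case cannot be avoided. In the proof of Theorem~\ref{thm:convergence}, the proposition is applied to self-aware $\pi_{t_n+t}$ and a limit $\bar\pi_t$ known only to lie in $\mathrm{FP}_2$. Self-awareness is deduced only afterwards, from $\tilde I=0$, via Proposition~\ref{prop:minimizer characterization}. Nor can you run Proposition~\ref{prop:bootstrap cond exp} on the lifted space $\Xc\times\Pc(\R^d)$, because there the first marginal $\mathcal{R}(\X_n)$ varies with $n$.

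\textbf{Remedy.} Never freeze the kernel; work with the information maps throughout. Your Cauchy--Schwarz idea lifts directly.
\begin{itemize}
\item Take $h$ bounded continuous on $\Xc\times\Pc_2(\R^d)$ and let $g_r$ be the Lebesgue density of $r$. Then $y\mapsto e^{-V(y)}\E^{\P_n}[h(X,\rho_n)\mid Y=y]=\int h(x,r)\,g_r(y)\,\mathcal{R}(\X_n)(dx,dr)$ has $y$-gradient of $L^1$-norm at most $\|h\|_\infty\,\E^{\P_n}[I(\rho_n)^{1/2}]$.
\item Combined with the $L^\infty$ bound and Rellich, this gives strong $L^2(\nu)$ convergence. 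The limit is identified by testing against $\mathcal{E}(\X_n)\to\mathcal{E}(\X)$.
\item Approximate $(x,y,r)\mapsto\hat c(x,y,r)$ uniformly on compacts by finite sums $\sum_i h_i(x,r)\ell_i(y)$, and control the tails by uniform integrability.
\end{itemize}
This yields $\|\tilde c_n\|_{L^2(\nu)}\to\|\tilde c\|_{L^2(\nu)}$ using only continuity and integrability of $\hat c$.

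The paper instead bounds a Sobolev norm of $\tilde c^{\pi_n}$ itself by the Fisher information and extracts a locally strongly convergent subsequence. It identifies the limit through $\E^\nu[h\,\tilde c^{\pi_n}]=\int h(y)\,\hat c(x,y,r)\,\mathcal{E}(\X_n)(dx,dy,dr)$. There too the $\rho$-dependence is carried by $\mathcal{E}(\X_n)$ rather than a frozen $w$. The cost is that it needs control of $\nabla_y\hat c$, which enters $\nabla_y\tilde c^{\pi_n}$.
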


\begin{Proposition} \label{prop:minimizer characterization}
Suppose $\tilde I(\X)=0$ with some $\X \in \rm FP_2(\mu,\nu)$. Then $\X$ is self-aware, i.e. $\AW_2(\pi^{\X},\X)=0$, and $\pi^{\X}$ is the unique optimizer for \eqref{eq:WOT}.
\end{Proposition}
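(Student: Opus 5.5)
Suppose $\tilde I(\X)=0$. Write $\rho_X:=\mathcal{L}(Y\mid\mathcal{F}_1)$, a random probability measure on $\R^d$, and $\pi:=\pi^{\X}\in\Pi(\mu,\nu)$.

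\textbf{Step 1: a Gibbs form for $\rho_X$.} The vanishing of \eqref{eq:WOTfisher} means that, $\P$-a.s., $\rho_X\ll\nu$ and
\[
\partial_y\log\frac{d\rho_X}{d\nu}(Y)=\tilde c^{\pi}(Y)-\hat c\bigl(X,Y,\rho_X\bigr).
\]
We read this conditionally on $\mathcal{F}_1$ as an identity for $\rho_X$-a.e.\ $y$, so that $\rho_X$ has zero relative Fisher information defect with respect to the tilted drift.

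We first obtain regularity. By Assumption~\ref{ass:cost}(ii), $\hat c$ and $\tilde c^\pi$ lie in $L^2$. Hence the density $f=d\rho_X/d\nu$ satisfies $\nabla \sqrt f\in L^2_{loc}$, so $f$ is in $W^{1,1}_{loc}$. The identity then says $\nabla\log f$ equals a prescribed $L^2_{loc}$ field on $\{f>0\}$.

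We next need positivity of $f$. We plan to use a standard argument: the weak gradient of $f$ is $f\cdot(\text{field})$, so $\log f$ is locally Sobolev on connected components of $\{f>0\}$. A Harnack/elliptic argument, as in the Fisher-information arguments of \cite{conforti2023projected}, then shows $f>0$ everywhere. This argument may need the bounded divergence of Assumption~\ref{ass:cost}(iii).

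Consequently $\log f(y)=\psi(y)-\delta_m c(x,\rho_x)(y)+\phi(x)$ for a function $\psi$ with $\nabla\psi=\tilde c^\pi$, using $\hat c=\partial_y\delta_m c$. The key point is that $\tilde c^\pi$ depends only on $y$ and is itself a gradient: the equation forces $\tilde c^\pi=\nabla(\log f+\delta_m c(x,\rho_x))$ for a.e.\ $x$.

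\textbf{Step 2: self-awareness.} The formula shows that $\rho_X$ is determined as the Gibbs-type fixed point in $\rho$ of
\[
\rho\propto \exp\bigl(\psi-\delta_m c(X,\rho)\bigr)\,\nu .
\]
This is the first-order condition of the strictly convex problem $\rho\mapsto c(X,\rho)+H(\rho\|\nu)-\rho(\psi)$, which has a unique minimizer by convexity of $c$ plus strict convexity of entropy. Hence $\rho_X$ is a measurable function of $X$, so $\mathcal{L}(Y\mid\mathcal{F}_1)=\pi_X$ and $\X$ is self-aware.

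\textbf{Step 3: optimality.} With $\rho_x=\pi_x$, the first-order condition shows that $\pi_x$ minimizes $c(x,\rho)+H(\rho\|\nu)-\rho(\psi)$ over $\rho$. Setting $\phi(x)$ equal to the minimal value, we get $c(x,\rho)+H(\rho\|\nu)\ge\phi(x)+\rho(\psi)$ with equality at $\pi_x$. Integrating against any $\pi'\in\Pi(\mu,\nu)$ gives $J(\pi')\ge\mu(\phi)+\nu(\psi)=J(\pi)$, since the second marginals coincide. Thus $\pi$ is optimal, and by uniqueness in Lemma~\ref{lem:wot-duality}, $\pi=\pi^*$.

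Integrability of $\psi$ and $\phi$ (needed to split $\int\psi\,d\pi'=\nu(\psi)$) will follow from Assumption~\ref{ass:cost}(i) and the sub-Gaussian bound, possibly after a truncation argument.

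\textbf{Main obstacle.} The hardest part is Step 1: positivity and regularity of the conditional densities, together with the identification of $\tilde c^\pi$ as a global gradient on all of $\R^d$ (not just on the support). I would first try to show that $\{f>0\}$ is open and closed using the local $L^2$ bound on $\nabla\log f$ along segments.
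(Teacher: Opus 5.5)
Your architecture is sound. Your Step~1 is the paper's Step~2: the paper fixes a reference pair $(x_0,\rho_0)$ and writes $\delta_m c(x,\rho)+\log f_\rho=h(y)+a(x,\rho)$, which is your $\psi$. Your Step~3 is the coupling-level version of the paper's convexity Step~3. The genuine difference is how you obtain self-awareness. The paper never proves it directly. Using the product coupling $\Gamma=\mu(dx)K_x(d\rho)\tilde K_x(d\tilde\rho)$, it shows that $\X$ minimizes the relaxed functional $\bar J$ over all of $\mathrm{FP}(\mu,\nu)$. It then invokes \cite{BPRS25}: the relaxed problem has a unique minimizer, namely the self-aware $\pi^*$. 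Your Step~2 argues pointwise in $x$ instead. Because $\psi$ depends only on $\pi$, every $\rho$ charged by $K_x$ satisfies the first-order condition of the same functional $G_x=c(x,\cdot)+H(\cdot\,\|\,\nu)-\langle\cdot,\psi\rangle$. Once $\rho\sim\mathrm{L}$ you even get $G_x(\rho')-G_x(\rho)\ge H(\rho'\,\|\,\rho)$, so $K_x$ is a Dirac mass. This is correct, up to the integrability of $\psi$, which the paper also uses tacitly when it writes $\int\!\int h\,d\tilde\rho\,d\Gamma=\nu(h)$. It is more self-contained and exhibits the dual pair $(\phi,\psi)$ of Lemma~\ref{lem:wot-duality}. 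Comparing only with couplings in Step~3 is then legitimate.

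The gap is the positivity step, and it lies in the input, not only in the method. From $\nabla f=f\,b$ with $b:=\tilde c^{\pi}-\hat c(x,\cdot,\rho)$, you can conclude $f>0$ a.e.\ only if $b$ is controlled in Lebesgue measure across the zero set of $f$. Assumption~\ref{ass:cost}(ii) gives only $b\in L^2(\rho)$, a weight that degenerates exactly where $f$ is small. For instance, in $d=1$ take $f(y)\propto e^{-1/y-y^2}$ for $y>0$ and $f=0$ on $(-\infty,0]$. Then $\sqrt f\in H^1(\R)$ and the Fisher information is finite, yet $f$ vanishes on a half-line, because $(\log f)'=y^{-2}-2y$ on $\{f>0\}$ is not integrable near $0$. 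So your assertion that the field is $L^2_{\rm loc}$ is not supported by the assumption you cite.

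The paper instead relies on the pointwise linear growth of $\hat c$ (stated after Assumption~\ref{ass:cost} and used throughout Section~\ref{sec:convergence}). This makes $b\in L^\infty_{\rm loc}(\R^d)$, and it then applies Lemma~\ref{lem:positivedensity}. With $u=\sqrt f$ and $v_\delta=\log(u+\delta)$, one has $|\nabla v_\delta|\le|b|/2$ uniformly in $\delta$. So the essential oscillation of $v_\delta$ on a ball stays bounded as $\delta\downarrow0$. That is incompatible with $u$ vanishing on a positive-measure subset of the ball while exceeding some $r>0$ on another. Local Lebesgue integrability of $b$ would already suffice, via the Poincar\'e inequality for $v_\delta$.

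Your ``open and closed'' plan should be replaced by such an a.e.\ dichotomy. For a Sobolev function in $d\ge2$ the set $\{f>0\}$ is only defined up to null sets, and ``$f>0$ everywhere'' is neither meaningful nor needed. Finally, Assumption~\ref{ass:cost}(iii) plays no role here; the paper uses it for the integration by parts in the proof of Proposition~\ref{prop:lsc}.
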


\begin{proof}[Proof of Theorem~\ref{thm:convergence}]
As $t \mapsto J(\pi_t)$ is decreasing and $J(\pi_t)$ is bounded from below, the limit of $J(\pi_t)$  exists as $t \to \infty$. Therefore 
\begin{align*}
    \int_0^\infty \tilde I(\pi_t) \, dt < \infty. 
\end{align*}
Let $t_n \to \infty$ be an arbitrary. It is sufficient to prove that every subsequential limit of $\pi_{t_n}$ is the unique optimizer for \eqref{eq:WOT}. Fix $T>0$, and define the shifted curve
\begin{align*}
    \pi^n_s:= \pi_{t_n+s}, \quad s \in [0,T]. 
\end{align*}

As $\pi_t \in \Pi(\mu,\nu)$, $(\pi_t)_{t \geq 0}$ is relatively compact in $(\Pc_2(\mathbb R^{2d}), \Wc_2)$, and thus also relatively compact in $\rm FP_2$ \cite[Theorem 1.7]{BBP26}. Since the drift of \eqref{eq:SDE} is of at most linear growth, the synchronous coupling  gives rise to the estimate 
\begin{align*}
    \AW_2(\pi_s, \pi_t) \leq C\left|s-t \right|^{1/2},
\end{align*}
where $C$ is a positive constant independent of $s,t \in \mathbb R_+$. Therefore $\pi^n_{\cdot}$ is relatively compact in $C ([0,T]; \mathrm{FP}_2 )$. After passing to a subsequence, suppose $\bar \pi_{\cdot}$ is a limit point of $\pi^n_{\cdot}$.

Invoking Proposition~\ref{prop:lsc}, we obtain that 
\begin{align*}
    \int_0^T \tilde I(\bar \pi_t) \, dt \leq \liminf_{n \to\infty} \int_0^T \tilde I(\pi^n_t) \,dt \to 0. 
\end{align*}
Therefore $\tilde I(\bar \pi_t)=0$ for a.e. $t \in [0,T]$. By Proposition~\ref{prop:minimizer characterization}, for a.e. $t \in [0,T]$, $\bar \pi_t$ equals the minimizer of \eqref{eq:WOT}. We conclude the proof by the continuity of $t \mapsto \bar \pi_t$ in $(\mathrm{FP}_2,\AW_2)$. 
\end{proof}

\subsection{Proof of Proposition~\ref{prop:lsc}}
Without loss of generality, we assume that  $\liminf_{n \to\infty} \tilde I(\X_n)<\infty$. Otherwise, there is nothing to prove. Therefore, let us assume that with some constant $C>0$, $$\sup_{n \in \mathbb N} \tilde I(\X_n) \leq C. $$
In the rest of proof, we denote $\P_n= \P^{\X_n}$, $\pi_n=\pi^{\X_n}$, $\hat c_n=\hat c(X,Y, \Lc^{\P_n}_{\Fc_1}(Y))$, and $\tilde c_n=\tilde c^{\pi_n}(Y)$.

\medskip

\noindent\emph{Step 1.} Let us prove that the expectation of the Fisher information, defined in \eqref{def:fisher}, is uniformly bounded, 
\begin{align} \label{eq:fisher}
\sup_{n \in \mathbb N} \E^{\P_n}[I( (\pi_n)_X)] \leq \sup_{n \in \mathbb N} \mathbb E^{\P_n}\left[ I(g_n)\right]<+\infty,
\end{align}
where $g_n=\frac{d\Lc^{\P_{n}}_{\Fc_1}(Y)}{d\mathrm{L}}: \Omega^{\X} \to L^0( \R^d ; \R_+)$ denotes the Lebesgue density of the random measure $\Lc^{\P_n}_{\Fc_1}(Y)$. By the tower property, $(\pi_n)_x= \E^{\P_n} [g_n \,| \,  X=x ]$ for $\mu$-a.e. $x$. Hence  the convexity of $\Pc(\mathbb R^d) \ni \rho \mapsto I(\rho)$ and Jensen's inequality implies the first inequality in \eqref{eq:fisher}. Let us prove the second one. 

 Using \eqref{eq:WOTfisher}, we get the inequality
\begin{align}\label{eq:fisher_lowerbound}
    \mathbb E^{\P_n}\left[ \left|\partial_y \log \left( \frac{dg_n}{d\nu}(Y) \right)\right|^2\right]+ 2 \mathbb E^{\P_n}\left[(\hat c_n-\tilde c_n) \cdot \partial_y \log \left(\frac{d g_n}{d\nu}(Y)\right) \right] \leq C,
\end{align}
where $C$ is a positive constant independent of $n$ that is allowed to change from line to line. Let us estimate the second term on the left.  

By direct computation, for any $\delta \in (0,1/2)$,
\begin{align*}
    \E^{\P_n}\left[\hat{c}_n \cdot \partial_y \log \left( \frac{dg_n}{d\nu}(Y) \right) \right]&=\E^{\P_n} \left[ \hat c_n \cdot \partial_y \log \left( g_n(Y) \right) +  \hat c_n \cdot \nabla V(Y) \right]\\
    & \geq - \delta \E^{\P_n} \left[I(g_n)\right]- \frac{1}{\delta}\E^{\P_n}\left[ |\hat c_n|^2 \right]- \mathbb E^{\mathbb P_n}\left[ |\hat c_n \cdot \nabla V(Y)|\right].
\end{align*}
By Assumption~\ref{ass:cost}, $(x,y,\rho) \mapsto \hat c(x,y,\rho)$ has at most linear growth. Recall the definition of $\mathcal{E}(\X_n)$ in \eqref{eq:info-map}. As $\AW_2(\X_n, \X) \to 0$ is equivalent to $\Wc_2(\mathcal{E}(\X_n),\mathcal{E}(\X)) \to 0$, $\mathcal{E}(\X_n)$ is relatively compact in $\Wc_2$. Therefore, the last two terms in the inequality above are bounded below  uniformly in $n$. Therefore we get that
\begin{align}\label{eq:hatcbounded}
   \E^{\P_n}\left[\hat{c}_n \cdot \partial_y \log \left( \frac{dg_n}{d\nu}(Y) \right) \right]   & \geq -\delta \E^{\P_n} \left[I(g_n)\right]-C.
\end{align}

According to Bayes' formula and the marginal constraint $$\E^{\P_n}[ \partial_y \log g_n(Y)  \, | \,  Y=y] =\frac{\E^{\P_n}\left[ \frac{\partial_y g_n (y)}{g_n(y)} g_n(y) \right]}{\E^{\P_n}[g_n(y)] }    = -\nabla V(y) \quad \nu\text{-}a.e. \ y.$$
Since $\tilde c$ is measurable with respect to $\sigma(Y)$, the tower property gives 
\begin{align}\label{eq:tildezero}
    &-\E^{\P_n}\left[ \tilde c \cdot \partial_y \log \left( \frac{dg_n}{d\nu}(Y)  \right) \right] =-\E^{\P_n}\left[\tilde c \cdot \left(- \nabla V(Y)+\nabla V(Y) \right)\right]=0.
\end{align}
Therefore, from \eqref{eq:fisher_lowerbound}, \eqref{eq:hatcbounded}, and the inequality 
\begin{align*}
        \mathbb E^{\P_n}\left[ \left|\partial_y \log \left( \frac{dg_n}{d\nu}(Y) \right)\right|^2\right] \leq 2\E^{\P_n}\left[I(g_n) \right]+2 \E^{\P_n}\left[|\nabla V(Y)|^2 \right],
\end{align*}
we conclude that $\E^{\P_n}[ I(g_n)]\leq C+\delta \E^{\P_n}[ I(g_n)]$. Hence we obtain \eqref{eq:fisher}.

\medskip 

\noindent\emph{Step 2.} 
Let us denote $\pi=\mathcal{L}^{\X}(X,Y)$. We show that for any bounded domain $\Omega \subset \mathbb R^d$, 
\begin{align*}
    \tilde c^{\pi_n}(\cdot) \to \tilde c^{\pi} (\cdot) \text{ in } L^2(\Omega; \nu) \text{ along a subsequence}. 
\end{align*}
Moreover, $\tilde c^{\pi_n}(\cdot) \to \tilde c^{\pi} (\cdot)$ weakly in $L^2(\R^d; \nu)$. 

According to \eqref{eq:fisher}, $I((\pi_n)_x)<+\infty$ for $\mu$-$a.e.$ $x$, and with $g^n_x(y):=\frac{d(\pi_n)_x}{d\mathrm{L}}(y)$, we have $\sqrt{g^n_x} \in H^1(\R^d)$ and $g^n_x \in W^{1,1}(\R^d)$. Therefore,
\begin{align*}
\tilde c^{\pi_n} (y):=& \, \E^{\pi_n} \left[\hat c(X,Y, (\pi_n )_X) \, | \, Y=y\right] 
= \, \frac{\int \hat c(x,y, (\pi_n)_x) \,g_x^n(y) \, \mu(dx)}{\int g^n_x(y) \, \mu(dx)} \\
=&e^{V(y)} \int \hat c(x,y, (\pi_n)_x) \,g_x^n(y) \, \mu(dx),
\end{align*}
is weakly differentiable in $y$. 

Under Assumption~\ref{ass:cost} on the growth rate of $\hat c$, $|\tilde c^{\pi_n} (y)| \leq C(1+|y|)$, and thus $\sup_{n \in \mathbb N} \lVert \tilde c^{\pi_n} \rVert_{\L^2(\nu)} < +\infty$. Moreover, we claim that $\nabla_y \tilde c^{\pi_n} \in \L^2(\Omega;\nu)$. To see this, it suffices to show that,
\begin{align*}
 \sup_{n \in \mathbb N}   \ \ \int_{\Omega} e^{V(y)} \left( \int \hat c(x,y,(\pi_n)_x)\, \partial_y g_x^n(y) \, \mu(dx) \right)^2 \, dy<\infty. 
\end{align*}
As $\mathbb R \times \mathbb R_+ \ni (a,b) \mapsto a^2/b$ is convex, and $e^{-V(y)}=\int g^n_x(y) \, \mu(dx)$ 
\begin{align*}
     e^{V(y)} \left( \int \hat c(x,y,(\pi_n)_x)\, \partial_y g_x^n(y) \, \mu(dx) \right)^2  & \leq C e^{V(y)} (1+|y|^2) \left( \int  \partial_y g_x^n(y) \, \mu(dx) \right)^2 \\
    & \leq C(1+|y|^2) \int \frac{|\partial_y g_x^n(y)|^2}{g_x^n(y)} \,\mu(dx).
\end{align*}
Integrating the inequality above with respect to $y$ over $\Omega$ and using \eqref{eq:fisher}, we obtain an upper bound of $\lVert \nabla \tilde c^{\pi_n}  \rVert_{\L^2(\Omega;\nu)}$ uniformly in $n$. By Sobolev embedding, $H^1(\Omega; \nu)$ compactly embeds into $\L^2(\Omega; \nu)$, and thus $(\tilde c_n)$ has a limit point  $\tilde c_{\Omega} \in \L^2(\Omega;\nu)$. As $\sup_{n \in \mathbb N} \lVert \tilde c^{\pi_n} \rVert_{\L^2(\nu)} < +\infty$, there exists a weak limit $\tilde c^*$ in $\L^2(\nu)$, and hence $\tilde c_{\Omega}= \tilde c^*|_{\Omega}$. To complete the proof of the claim, we show that $\tilde c^*=\tilde c^{\pi}$.

Taking an arbitrary $h \in C_c^{\infty}(\R^d; \R)$, as $\AW_2(\X_n, \X)\to 0$, 
\begin{align*}
\E^{\nu}\left[h(Y)\tilde c^*(Y) \right] &=\lim\limits_{n \to \infty}\E^{\nu}\left[h(Y) \tilde c^{\pi_n} (Y) \right] = \lim\limits_{n\to \infty} \E^{\pi_n}\left[h(Y) \hat c(X,Y, (\pi_n)_X) \right] \\
&= \E^{\pi}\left[h(Y)\hat c (X,Y,\pi_X)\right] =\E^{\nu}\left[h(Y)\tilde c^{\pi}(Y) \right],
\end{align*}
which verifies that $\tilde c^*=\E^{\pi}_{Y}\left[ \hat c(X,Y,\pi_X)\right]$. 

\medskip

\noindent\emph{Step 3.} Recalling $g_n = \mathcal{L}^{\P_n}_{\mathcal{F}_1}(Y)$, let us write 
\begin{align*}
\tilde I(\X_n)&=\E^{\P_n} \left[I( g_n\| \nu) \right]+\E^{\P_n}\left[ ( \hat c_n-\tilde c_n)^2 \right] +2 \E^{\P_n} \left[ (\hat c_n -\tilde c_n) \cdot \partial_y \log\left( \frac{d g_n}{d \nu}(Y)  \right) \right] \\
&=:A_n+B_n+C_n.
\end{align*}
Invoking Lemma~\ref{lem:lsc-aw} and  the lower semicontinuity of $\rho \mapsto I(\rho \| \nu)$, $$\liminf_{n \to \infty} A_n \geq A:=\E^{\P}[I (\mathcal{L}^{\P}_{\mathcal{F}_1}(Y) \| \nu)  ].$$

Using the operator $\mathcal{E}$ in \eqref{eq:info-map},
\begin{align}\label{eq:lsc_Bn}
    B_n=& \int \hat c^2(x, y , \rho) \, \mathcal{E}(\X_n)(dx,dy,d\rho)+\int (\tilde c^{\pi_n})^2(y) \, \nu(dy)  \notag \\
    &- \int 2 \hat c(x, y , \rho) \cdot \tilde c^{\pi_n}(y) \, \mathcal{E}(\X_n)(dx,dy,d\rho).
\end{align}
Again, by Lemma~\ref{lem:lsc-aw}, the liminf of the first term on the right is greater than  \[\int \hat c^2(x,y,\rho) \, \mathcal{E}(\X)(dx,dy,d\rho).\] As $\tilde c_n$ converges weakly to $\tilde c$ in $\L^2(\nu)$, $\liminf_{n\to \infty}\lVert \tilde c_n \rVert_{\L^2(\nu)}\geq \lVert \tilde c \rVert_{\L^2(\nu)}$. It remains to prove the convergence of \eqref{eq:lsc_Bn}, i.e. as $n\to\infty$
\[ \int  \hat c(x, y , \rho) \cdot \tilde c^{\pi_n}(y) \, \mathcal{E}(\X_n)(dx,dy,d\rho) \to \int 2 \hat c(x, y , \rho) \cdot \tilde c^{\pi}(y) \, \mathcal{E}(\X)(dx,dy,d\rho).\]
Let us estimate the difference between the term on the left and the right, and set 
\begin{align*}
    I_n=& \int \hat c(x,y,\rho) \cdot( \tilde c^{\pi_n}(y)-\tilde c^{\pi}(y)) \, \mathcal{E}(\X_n)(dx,dy,d\rho), \\
    II_n=&\int \hat c(x,y,\rho) \cdot \tilde c^{\pi}(y) \left(\mathcal{E}(\X_n)-\mathcal{E}(\X) \right)(dx,dy,d\rho). 
\end{align*}
It is sufficient to prove that $|I_n|+|II_n| \to 0$.  For any  $B_R \subset \R^d$
\begin{align*}
 I_n^2 \leq & \lVert \tilde c_n -\tilde c \rVert^2_{\L^2(B_R;\nu)} \int_{B_R}  \hat c^2(x,y,\rho) \,  \mathcal{E}(\X_n)(dx,dy,d\rho)  \\
 & + \int_{|y| \geq R} \left(|\tilde c^{\pi_n}(y)|^2 + | \tilde c^{\pi}(y)|^2 + \hat c^2(x,y,\rho) \right) \mathcal{E}(\X_n)(dx,dy,d\rho). 
\end{align*}
According to \emph{Step 2}, the first term goes to zero as $n \to \infty$ for any $R>0$. Due to the linear growth condition on $\tilde c_n, \tilde c$, the second term vanishes as $R \to 0$ uniformly in $n$. Therefore $I_n \to 0$ as $n \to \infty$.

For any small $\varepsilon$, choose a bounded continuous function $h$ such that $\lVert h - \tilde c \rVert_{\L^2(\nu^{\delta})} \leq \varepsilon$. Then we have 
\begin{align*}
    \int \hat c(x,y,\rho) \cdot h(y) \left(\mathcal{E}(\X_n)-\mathcal{E}(\X) \right)(dx,dy,d\rho) \to& \  0, \\
   \left|\int \hat c(x,y,\rho) \cdot (h(y)-\tilde c) \left(\mathcal{E}(\X_n)-\mathcal{E}(\X) \right) (dx,dy,d\rho)  \right| \leq & \ C\varepsilon.
\end{align*}
Letting $\varepsilon \to 0$, $II_n\to 0$, and hence 
\begin{align*}
    \liminf_{n \to \infty} B_n \geq B:=\E^{\P} \left[(\hat c-\tilde c)^2 \right].
\end{align*}

In the end, due to \eqref{eq:tildezero}, it suffices to estimate $\lim\limits_{n \to\infty} \E^{\P_n}\left[\hat c_n \cdot \partial_y \log(\rho_n )\right]$. Denote by $\mathcal{R}(\X_n)(dx,d\rho)$ the pushforward measure of $\mathcal{E}(\X_n)$ 
under $(x,y,\rho) \mapsto (x, \rho)$. As 
\begin{align*}
    \int       \mathcal{R}(\X_n)(dx,d\rho)  \int \left|\hat c(x,y,\rho) \cdot \partial_y \log (\rho(y))\rho(y)\right| \,dy &=\E^{\P_n}\left[|\hat c_n \cdot \partial_y \log(\rho_n )|\right]<\infty, \\
    \int        \mathcal{R}(\X_n)(dx,d\rho)  \int \left|\nabla_y \cdot \hat c(x,y,\rho) \rho(y) \right| \,dy&= \E^{\P_n}\left[|\nabla_y \cdot \hat c_n |\right] < \infty,
\end{align*}
for $\mathcal{R}(\X_n)$-a.e. $(x,\rho)$, it holds that, 
\begin{align*}
     \int \left|\hat c(x,y,\rho) \cdot \partial_y \log (\rho(y))\rho(y)\right| \,dy &< \infty, \\
      \int \left|\nabla_y \cdot \hat c(x,y,\rho) \rho(y) \right| \,dy &< \infty. 
\end{align*}
Therefore we are allowed to integrate by parts, and hence get 
\begin{align*}
\E^{\P_n}\left[\hat c_n \cdot \partial_y \log(\rho_n )\right] =& \int       \mathcal{R}(\X_n)(dx,d\rho)  \int \hat c(x,y,\rho) \cdot \partial_y \log (\rho(y))\rho(y) \,dy  \\
=&-\int        \mathcal{R}(\X_n)(dx,d\rho)  \int \nabla_y \cdot \hat c(x,y,\rho) \rho(y) \,dy.
\end{align*}
Note that $(x,\rho) \mapsto \int \nabla_y \cdot \hat c(x,y,\rho) \rho(y) \, dy$ is continuous. Therefore, by the weak convergence of $\mathcal{R}(\X_n) \to \mathcal{R}(\X)$, we conclude that $\lim\limits_{n\to\infty} \E^{\P_n}\left[\hat c_n \cdot \partial_y \log(\rho_n )\right]=\E^{\P}\left[\hat c \cdot \partial_y \log(\rho )\right]$, and hence 
\begin{align*}
    \liminf_{n \to \infty} C_n \geq C:=2 \E^{\P} \left[ (\hat c -\tilde c) \cdot \partial_y \log\left( \frac{d \rho}{d \nu}(Y)  \right) \right],
\end{align*}
which completes the proof. \qed

\subsection{Proof of Proposition~\ref{prop:minimizer characterization}}

Let us start with an auxiliary lemma, which could be skipped at the first reading.

\begin{Lemma}\label{lem:positivedensity}
Let $D\subset \mathbb R^d$ be connected, and 
\(
    u\in H^1_{\mathrm{loc}}(D)
\) with $u \geq 0$. Suppose that
\[
    \nabla u=b\,u
    \quad\text{a.e. on }D,
\]
where
\(
b\in L^\infty_{\mathrm{loc}}(D;\mathbb R^d).
\)
Then either $u=0$ a.e., or
$u>0$ \, a.e. on $D$.

\end{Lemma}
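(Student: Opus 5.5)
The plan is to show that the zero set of $u$ is both open and closed in $D$ up to null sets, by a local logarithmic (Harnack-type) argument. Fix a ball $B\Subset D$ on which $\|b\|_{L^\infty(B)}\le M$. For $\eta>0$ consider $w_\eta:=\log(u+\eta)$. By the chain rule in $H^1_{\mathrm{loc}}$, $w_\eta\in H^1(B)$ and
\[
\nabla w_\eta=\frac{\nabla u}{u+\eta}=b\,\frac{u}{u+\eta},
\]
so $|\nabla w_\eta|\le M$ a.e. on $B$, uniformly in $\eta$. A Sobolev function with bounded gradient on a ball is Lipschitz there (after choosing its continuous representative), with constant $M$. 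Hence
\[
|\log(u(y)+\eta)-\log(u(z)+\eta)|\le M|y-z|
\]
for a.e. $y,z\in B$ and every $\eta>0$. This is the key estimate, and the main point to check carefully is that the exceptional null set can be chosen independently of $\eta$. To do this, I would take $\eta$ along a countable sequence $\eta_k\downarrow0$ and use Lebesgue points of $u$, which gives a single full-measure set $G\subset B$ on which the inequality holds for all $k$.

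Next I deduce a dichotomy on each ball. Suppose $u(z_0)>0$ for some $z_0\in G$. Letting $\eta_k\to0$ gives, for every $y\in G$,
\[
\log(u(y)+\eta_k)\ge \log(u(z_0)+\eta_k)-M\,\mathrm{diam}(B),
\]
and therefore $u(y)\ge u(z_0)e^{-M\,\mathrm{diam} B}>0$. Consequently, on each such ball either $u=0$ a.e. or $u>0$ a.e., and in the latter case $u$ is even locally bounded below by a positive constant.

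Finally I globalize using connectedness. Let $U$ be the set of $x\in D$ having a ball neighbourhood $B\Subset D$ on which $u>0$ a.e., and $Z$ the set of those with a ball on which $u=0$ a.e. Both sets are open by definition. By the ball dichotomy above, every point lies in $U\cup Z$. They are disjoint, because two overlapping balls with opposite alternatives would intersect in a set of positive measure, which is impossible. Since $D$ is connected, one of $U$ or $Z$ is all of $D$. Covering $D$ by countably many such balls then yields $u=0$ a.e. or $u>0$ a.e. on $D$.

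The main obstacle is the first step. It requires justifying that $\log(u+\eta)\in H^1_{\mathrm{loc}}$ with the stated gradient, which uses the chain rule for a Lipschitz function on the range $[0,\infty)$ together with $u+\eta\ge\eta$. It also requires the uniform-in-$\eta$ Lipschitz bound on a common full-measure set.
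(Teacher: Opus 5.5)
Your proposal is correct and follows essentially the same route as the paper. The paper also bounds $|\nabla\log(u+\delta)|\le\|b\|_{L^\infty(B)}$ uniformly in $\delta$ on a ball, and then derives a contradiction from the essential oscillation being at least $\log(r+\delta)-\log\delta\to\infty$; this is the same mechanism as your pointwise lower bound $u\ge u(z_0)e^{-M\,\mathrm{diam}B}$. Your explicit open--closed argument for the globalization step fills in what the paper leaves as ``since $D$ is connected''.
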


\begin{proof}
Fix a compact ball $B\subset D$. For $\delta>0$, define
\[
    v_\delta:=\log(u+\delta).
\]
Since $u\in H^1_{\mathrm{loc}}(D)$ and $r\mapsto \log(r+\delta)$ is Lipschitz on
$[0,\infty)$, we have $v_\delta\in H^1(B)$. Moreover,
\[
    \nabla v_\delta
    =
    \frac{\nabla u}{u+\delta}
    =
    b\,\frac{u}{u+\delta}.
\]
Hence $|\nabla v_\delta|\le |b| \  \text{a.e. on }B$. 
Since $b\in L^\infty(B)$, it follows that $v_\delta\in W^{1,\infty}(B)$, and hence 
\begin{align}\label{eq:oscupper}
    \operatorname*{ess\,osc}_{B} v_\delta
    \le
    C_B \|b\|_{L^\infty(B)},
\end{align}
where $C_B$ depends only on the ball $B$. Importantly, this bound is independent
of $\delta$.

We now show that, on $B$, the function $u$ cannot vanish on a set of positive
measure and be positive on another set of positive measure. Otherwise, $|\{u=0\}\cap B|>0$ and there exists $r>0$ such that $|\{u>r\}\cap B|>0$. On $\{u=0\}\cap B$, we have $v_\delta=\log \delta$, while on $\{u>r\}\cap B$,
$v_\delta\ge \log(r+\delta)$. Hence
\[
    \operatorname*{ess\,osc}_{B} v_\delta
    \ge
    \log(r+\delta)-\log\delta.
\]
Letting $\delta\downarrow0$, the right-hand side diverges to $+\infty$, which
contradicts \eqref{eq:oscupper}. 
\[
    \operatorname*{ess\,osc}_{B} v_\delta
    \le
    C_B \|b\|_{L^\infty(B)}.
\]
Therefore, for every compact ball $B\subset D$, either
$u=0\ \text{a.e.}$, or $u>0\ \text{a.e. on }B$. Since $D$ is connected, this local dichotomy implies the global dichotomy, which completes the proof. 
\end{proof}

\begin{proof}[Proof of Proposition~\ref{prop:minimizer characterization}]

\noindent \emph{Step 1.} Let us consider the relaxed version of \eqref{eq:WOT}. For any $\X \in \rm FP(\mu,\nu)$, define
\begin{align*}
    \bar J(\X):= \E^{\P^{\X}}\left[c(X, \Lc(Y| \Fc_1)) + H(\Lc(Y|\Fc_1) \| \nu) \right], 
\end{align*}
and then according to \cite{BPRS25}, as $\rho \mapsto c(x,\rho)$ is convex, there exists a unique minimizer to
\begin{align*}
    \inf_{\X \in \rm FP(\mu,\nu)} \bar J(\X), 
\end{align*}
which coincides with the unique minimizer to \eqref{eq:WOT}. 

Let us denote $\eta=\mathcal{R}(\X) \in \Pc(\R^d \times \Pc(\R^d))$, $ f_{\rho}(y):= \frac{d \rho}{d \nu}(y)$, and
\begin{align}\label{eq:linearderivative}
 \Psi(x,y,\rho):= =\delta_m c(x,y,\rho)+\log f_{\rho}(y). 
\end{align}
For the fixed $\X$, with $\pi=\mathcal{L}^{\P^{\X}}(X,Y)$, $\tilde c^{\pi}(\cdot)$  is a function of $y$. From $\tilde I(\X)=0$, it follows that for $\eta$-a.e. $(x,\rho)$,
\begin{equation}\label{eq:rhoequation}
\log f_{\rho}(y)=\tilde c^{\pi}(y)-\hat c(x,y,\rho), \quad \rho\text{-}a.e. \ y.
\end{equation} 

\medskip 
\noindent\emph{Step 2.} Let us prove that there exist weakly differentiable $g:\R^d \to \R$ and measurable $a:\R^d \times \Pc(\R^d) \to \R$ such that for $\eta$-a.e.$(x,\rho)$
\begin{align}\label{eq:ydependence}
    \Psi(x,y,\rho)=g(y)+a(x,\rho), \quad \forall \, y \in \R^d.
\end{align}

Denote $u_{\rho}= \sqrt{f_{\rho}}$. From \emph{Step 1} of Proposition~\ref{prop:lsc}, for $\eta$-a.e. $(x,\rho)$, it holds that $I(\rho) < \infty$, which implies that $u_{\rho} \in H^1(\R^d)$. According to \eqref{eq:rhoequation}, on the set $\{u_{\rho}>0\}$,
\[
    \nabla u_{\rho}(y)
    =
    \frac12
    \bigl(
       \tilde c^{\pi}(y)-\hat c(x,y,\rho)
    \bigr)
    u_{x,\rho}(y)
\]
Moreover, since $u_{\rho}\in H^1(\R^d)$, according to \cite[Chapter 9, p. 314, item 4]{BR11} we have
\[
    \nabla u_{\rho}(y)=0
    \quad
    \text{a.e. on }\{u_{\rho}=0\}.
\]
Therefore
\[
    \nabla u_{x,\rho}(y)
    =
    \frac12
    \bigl(
        \tilde c^{\pi}(y)-\hat c(x,y,\rho)
    \bigr)
    u_{\rho}(y)
    \quad
    \text{a.e. on }\mathbb R^d.
\]

Since $\tilde c^{\pi}(\cdot) -\hat c(x,\cdot,\rho) \in L^\infty_{\mathrm{loc}}(\mathbb R^d)$, by Lemma~\ref{lem:positivedensity}, we have either $u_{\rho}=0$ a.e. or $u_{\rho} > 0$ a.e. on $\R^d$. 
The first alternative is impossible because $\rho$ is a probability measure. Therefore $\rho$ is equivalent to the Lebesgue measure. 

Together with \eqref{eq:rhoequation}, we get that
for $\eta$-a.e. $(x,\rho)$
\[
\nabla_y \Psi(x,y,\rho)=\tilde c^{\pi}(y), \quad \mathrm{L}\text{-a.e. } y.
\]
Lebesgue-a.e. in $y$, for $\eta$-a.e. $(x,\rho)$.

Let $G\subset \mathbb R^d\times\mathcal P(\mathbb R^d)$ be the full
$\eta$-measure set on which the above identity holds. Choose one element
$(x_0,\rho_0)\in G$ and define
\[
    h(y):=\Psi(x_0,y,\rho_0).
\]
Then for every $(x,\rho) \in G$, 
\[
    \nabla_y\bigl(\Psi(x,y,\rho)-h(y)\bigr)=0
    \quad \text{a.e.} \text{ in } \R^d.
\]
Since $\mathbb R^d$ is connected, every weakly differentiable function with zero
weak gradient is a constant. Hence, for every $(x,\rho)\in G$, there exists a
constant $a(x,\rho)$ such that
\[
    \Psi(x,y,\rho)-h(y)=a(x,\rho), \quad \forall y \in \R^d,
\]
which proves \eqref{eq:ydependence}.

\medskip

\noindent \emph{Step 3.} Let us prove that $\eqref{eq:ydependence}$ implies the optimality of $\X$. Let \(\tilde \X \in \rm FP(\mu,\nu)\) be any competitor, and define $\tilde \eta =R (\tilde \X )$.
Disintegrate \(\eta\) and \(\tilde\eta\) with respect to the first marginal \(\mu\),
\[
\eta(dx,d\rho)=\mu(dx)\,K_x(d\rho), \qquad
\tilde\eta(dx,d\tilde\rho)=\mu(dx)\,\tilde K_x(d\tilde\rho).
\]
Then define a coupling
\[
\Gamma(dx,d\rho,d\tilde\rho):=\mu(dx)\,K_x(d\rho)\,\tilde K_x(d\tilde\rho),
\]
for which \((x,\rho)\)-marginal is \(\eta\) and \((x,\tilde\rho)\)-marginal is \(\tilde\eta\). Therefore,
\[
\bar J(\tilde\X)-\bar J(\X)
=
\int \big(F(x,\tilde\rho)-F(x,\rho)\big)\,\Gamma(dx,d\rho,d\tilde\rho),
\]
where $F(x,\rho):=c(x,\rho)+H(\rho \| \nu)$.

By the convexity of $\rho \mapsto F(x,\rho)$, recalling $\Psi$ defined in \eqref{eq:linearderivative}
\[
F(x,\tilde\rho)-F(x,\rho)
\ge
\int \Psi(x,y,\rho)\,(\tilde\rho-\rho)(dy).
\]
It follows that
\[
\bar J(\tilde\X)-\bar J(\X)
\ge
\int \left[\int \Psi(x,\rho,y)\,(\tilde\rho-\rho)(dy)\right]
\,\Gamma(dx,d\rho,d\tilde\rho).
\]
According to \eqref{eq:ydependence}, the right-hand side of the above inequality equals 
\begin{align*}
   & \int \left[\int h(y)\,\tilde\rho(dy)\right]\Gamma(dx,d\rho,d\tilde\rho)
-
\int \left[\int h(y)\,\rho(dy)\right]\Gamma(dx,d\rho,d\tilde\rho)=0, 
\end{align*}
where we have used that $\int \rho \, \Gamma(dx,d\rho,d\tilde \rho)=\int \tilde \rho \, \Gamma(dx,d\rho,d\tilde \rho)=\nu$. Therefore $\bar J(\tilde \X) \geq \bar J(\X)$ for any $\tilde \X \in \rm FP(\mu,\nu)$, which completes the proof. 
\end{proof}

\bibliographystyle{siam}
\bibliography{Biblio}

\end{document}